 \theoremstyle{plain}
   \newtheorem{theorem}{Theorem}[section]
   \newtheorem*{thm216}{Theorem 2.6}
   \newtheorem*{thm217}{Theorem 2.7}
   \newtheorem*{thm219}{Theorem 2.9}
   \newtheorem*{thm2110}{Theorem 2.10}
   \newtheorem{proposition}[theorem]{Proposition}
   \newtheorem{corollary}[theorem]{Corollary}
\theoremstyle{definition}
   \newtheorem{definition}[theorem]{Definition}
\begin{document}
\author{Jessica Striker}
\title{A unifying poset perspective on alternating sign matrices, plane partitions, Catalan objects, tournaments, and tableaux}
\address{School of Mathematics, University of Minnesota, Minneapolis, MN 55455}
 
\begin{abstract}
Alternating sign matrices (ASMs) are square matrices with entries 0, 1, or $-1$ whose rows and columns sum to 1 and whose nonzero entries alternate in sign. We present a unifying perspective on ASMs and other combinatorial objects by studying a certain tetrahedral poset and its subposets. We prove the order ideals of these subposets are in bijection with a variety of interesting combinatorial objects, including ASMs, totally symmetric self-complementary plane partitions (TSSCPPs), staircase shaped semistandard Young tableaux, Catalan objects, tournaments, and totally symmetric plane partitions. We prove product formulas counting these order ideals and give the rank generating function of some of the corresponding lattices of order ideals. We also prove an expansion of the tournament generating function as a sum over TSSCPPs. This result is analogous to a result of Robbins and Rumsey expanding the tournament generating function as a sum over alternating sign matrices.
\end{abstract}

%

\maketitle

\section{Background and terminology}

\begin{definition}
\label{def:asm}
Alternating sign matrices (ASMs) are square matrices with the following properties:
\begin{itemize}
\item entries $\in \{0,1,-1\}$
\item the entries in each row and column sum to 1
\item nonzero entries in each row and column alternate in sign
\end{itemize}
\end{definition}
See Figure~\ref{fig:n3asm} for the seven ASMs with three rows and three columns.

\begin{figure}[htbp]
\[
\left( 
\begin{array}{rrr}
1 & 0 & 0 \\
0 & 1 & 0\\
0 & 0 & 1
\end{array} \right)
\left( 
\begin{array}{rrr}
1 & 0 & 0 \\
0 & 0 & 1\\
0 & 1 & 0
\end{array} \right)
\left( 
\begin{array}{rrr}
0 & 1 & 0 \\
1 & 0 & 0\\
0 & 0 & 1
\end{array} \right)
\left( 
\begin{array}{rrr}
0 & 1 & 0 \\
1 & -1 & 1\\
0 & 1 & 0
\end{array} \right)
\left( 
\begin{array}{rrr}
0 & 1 & 0 \\
0 & 0 & 1\\
1 & 0 & 0
\end{array} \right)
\left( 
\begin{array}{rrr}
0 & 0 & 1 \\
1 & 0 & 0\\
0 & 1 & 0
\end{array} \right)
\left( 
\begin{array}{rrr}
0 & 0 & 1 \\
0 & 1 & 0\\
1 & 0 & 0
\end{array} \right)
\]
\label{fig:n3asm}
\caption{The $3\times 3$ ASMs}
\end{figure}

In 1983 Mills, Robbins, and Rumsey conjectured that the total number of $n \times n$ alternating sign matrices is given by the expression
\begin{equation}
\label{eq:product}
\displaystyle\prod_{j=0}^{n-1} \frac{(3j+1)!}{(n+j)!}.
\end{equation}
They were unable to prove this for all $n$, so for the next 13 years it remained a mystery until Doron Zeilberger proved it~\cite{ZEILASM}. Shortly thereafter, Greg Kuperberg gave a shorter proof using a bijection between ASMs and configurations of the statistical physics model of square ice with domain wall boundary conditions~\cite{KUP_ASM_CONJ}. This connection with physics has since strengthened with the conjecture of Razumov and Stroganov which says that the enumeration of subclasses of ASMs gives the ground state probabilities of the dense O(1) loop model in statistical physics~\cite{O1LOOP}. This conjecture has been further refined and various special cases have been proved (see for example~\cite{DIFRANCESCOII}), but no general proof is known.

Expression (\ref{eq:product}) also counts totally symmetric self-complementary plane partitions in a $2n\times 2n\times 2n$ box~\cite{ANDREWS_PPV}.  
A \emph{plane partition} is a two-dimensional array of positive integers which weakly decrease across rows from left to right and weakly decrease down columns.
Equivalently, a plane partition $\pi$ is a finite set of positive integer lattice points $(i,j,k)$ such that if $(i,j,k)\in\pi$ and $1\le i'\le i$, $1\le j'\le j$, and $1\le k'\le k$ then $(i',j',k')\in\pi$. One can visualize this as stacks of unit cubes pulled toward the corner of a room.
A plane partitions is \emph{totally symmetric} 
if whenever $(i,j,k)\in\pi$ then all six permutations of $(i,j,k)$ are also in $\pi$.
\begin{definition}
\label{def:tsscpp}
A totally symmetric self-complementary plane partition (TSSCPP) inside a 
$2n \times 2n \times 2n$ box is a totally symmetric plane partition which is
equal to its complement in the sense that the collection of empty cubes in the box is of the same shape as the collection of cubes in the plane partition itself.  
\end{definition}
See Figure \ref{fig:cl10} for the $7$ TSSCPPs inside a $6\times 6\times 6$ box. 

\begin{figure}[htbp]
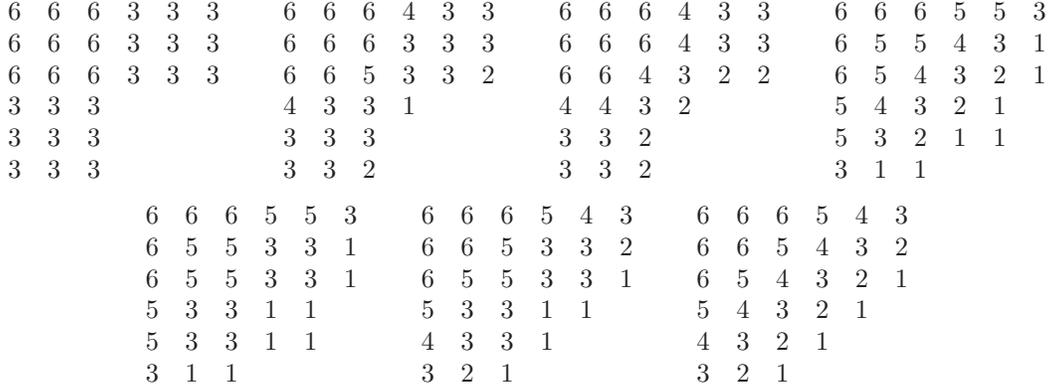

\[
\begin{array}{cccccc}
6&6&6&3&3&3\\
6&6&6&3&3&3\\
6&6&6&3&3&3\\
3&3&3&&&\\
3&3&3&&&\\
3&3&3&&&
\end{array}
\hspace{.5cm}
\begin{array}{cccccc}
6&6&6&4&3&3\\
6&6&6&3&3&3\\
6&6&5&3&3&2\\
4&3&3&1&&\\
3&3&3&&&\\
3&3&2&&&
\end{array}
\hspace{.5cm}
\begin{array}{cccccc}
6&6&6&4&3&3\\
6&6&6&4&3&3\\
6&6&4&3&2&2\\
4&4&3&2&&\\
3&3&2&&&\\
3&3&2&&&
\end{array}
\hspace{.5cm}
\begin{array}{cccccc}
6&6&6&5&5&3\\
6&5&5&4&3&1\\
6&5&4&3&2&1\\
5&4&3&2&1&\\
5&3&2&1&1&\\
3&1&1&&&
\end{array}
\]
\[
\begin{array}{cccccc}
6&6&6&5&5&3\\
6&5&5&3&3&1\\
6&5&5&3&3&1\\
5&3&3&1&1&\\
5&3&3&1&1&\\
3&1&1&&&
\end{array}
\hspace{.5cm}
\begin{array}{cccccc}
6&6&6&5&4&3\\
6&6&5&3&3&2\\
6&5&5&3&3&1\\
5&3&3&1&1&\\
4&3&3&1&&\\
3&2&1&&&
\end{array}
\hspace{.5cm}
\begin{array}{cccccc}
6&6&6&5&4&3\\
6&6&5&4&3&2\\
6&5&4&3&2&1\\
5&4&3&2&1&\\
4&3&2&1&&\\
3&2&1&&&
\end{array}
\]
\caption{TSSCPPs inside a $6\times 6\times 6$ box}
\label{fig:cl10}
\end{figure}

In~\cite{ANDREWS_PPV} Andrews showed that TSSCPPs inside a $2n\times 2n\times 2n$ box are counted by~(\ref{eq:product}). This proves that
TSSCPPs inside a $2n \times 2n \times 2n$ box are equinumerous with $n\times n$ ASMs, but no explicit bijection between these two sets of objects is known. In this paper we present a new perspective which shows that these two very different sets of objects arise naturally as members of a larger class of combinatorial objects. This perspective may bring us closer to the construction of an explicit ASM--TSSCPP bijection.


In order to establish this larger context 
we construct a tetrahedral partially ordered set (poset) containing subposets corresponding to each of these objects. We 
will follow the poset terminology from Chapter 3 of~\cite{STANLEY} with the notable exception that we will extend the definition of a Hasse diagram slightly by sometimes drawing edges in the Hasse diagram from $x$ to $y$ when $x<y$ but $y$ does not cover $x$. We will denote the rank generating function of a poset $P$ as $F(P,q)$. Also, we will use $P^*$ to mean the poset dual to $P$ and $J(P)$ to mean the lattice of order ideals of $P$. See~\cite{STRIKER_THESIS} for a more detailed summary of the poset definitions used in this paper. Also, for an extended abstract of this paper, see the proceedings article~\cite{STRIKER_FPSAC_09}.

The paper is organized as follows. In Section~\ref{sec:intro} we introduce the tetrahedral poset and list the theorems counting the order ideals of its subposets. Then in Sections~\ref{sec:two}--\ref{sec:four}, \ref{sec:five}, and~\ref{sec:six} we prove the theorems listed in Section~\ref{sec:intro} and discuss connections with combinatorial objects. Section~\ref{sec:3to4} uses this poset perspective to prove an expansion of the tournament generating function as a sum over TSSCPPs. This is analogous to the Robbins-Rumsey expansion of the tournament generating function as a sum over ASMs. Finally, Section~\ref{sec:trap} discusses a possible extension of the poset perspective presented in this paper.

\section{Introduction to the tetrahedral poset}
\label{sec:intro}
We now define posets $P_n$ and $T_n$ using certain unit vectors in $\mathbb{R}^3$. We also state the theorems which will be proved and explained throughout the rest of this paper concerning the order ideals of subposets of $P_n$ and $T_n$ and their connections to well-known combinatorial objects such as ASMs, TSSCPPs, Catalan objects, tournaments, and semistandard Young tableaux. We then discuss bijections between the order ideals of subposets of $T_n$ and certain integer arrays. 

We begin by constructing the pyramidal poset $P_n$ by use of certain unit vectors. Define the vectors $\overrightarrow{r}=(\frac{\sqrt{3}}{2},\frac{1}{2},0)$, $\overrightarrow{g}=(0,1,0)$, and $\overrightarrow{b}=(-\frac{\sqrt{3}}{2},\frac{1}{2},0)$. We use these vectors to define $P_n$ by drawing its Hasse diagram. First let the elements of $P_n$ be defined as the coordinates of all the points reached by linear combinations of $\overrightarrow{r}$ and $\overrightarrow{g}$. That is, as a set $P_n =\{c_1 \overrightarrow{r} +c_2 \overrightarrow{g}, \mbox{ } c_1,c_2\in \mathbb{Z}_{\ge 0}, \mbox{ } c_1+c_2\le n-2\}$. To obtain the partial order on $P_n$ let all the vectors $\overrightarrow{r}$ and $\overrightarrow{g}$ used to define the elements of $P_n$ be directed edges in the Hasse diagram, and additionally draw into the Hasse diagram as directed edges the vectors $\overrightarrow{b}$ between poset elements wherever possible.
See Figure~\ref{fig:p4} for an example of this construction. Thus it is easy to check that the Hasse diagram of $P_n$ has ${n\choose 2}$ 
vertices and $3 {n-1 \choose 2}$ edges. 

It will be useful for what follows to count the number of order ideals $|J(P_n)|$ and find the rank generating function $F((J(P_n),q))$.
\begin{theorem}
\label{lemma:pn}
\[|J(P_n)|=2^{n-1}. \hspace{.35in} F(J(P_n),q)=\displaystyle\prod_{j=1}^{n-1} (1+q^j).\]
\end{theorem}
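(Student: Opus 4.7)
The plan is to replace $P_n$ with a transparent coordinate model whose order ideals biject with subsets of $\{1,\ldots,n-1\}$ in a size-tracking way.

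First I would show that the partial order on $P_n$ is given by
\[(c_1,c_2) \le (c_1',c_2') \iff c_2 \le c_2' \text{ and } c_1+c_2 \le c_1'+c_2'.\]
The forward implication is immediate since each of $\overrightarrow{r}$, $\overrightarrow{g}$, $\overrightarrow{b}$ weakly increases both $c_2$ and $c_1+c_2$. For the converse I would exhibit an explicit directed path consisting of $(c_1'+c_2')-(c_1+c_2)$ copies of $\overrightarrow{r}$ followed by $c_2'-c_2$ copies of $\overrightarrow{b}$, and check that every intermediate point has coordinate sum at most $c_1'+c_2' \le n-2$ and therefore lies in $P_n$. Setting $x := c_2$ and $y := c_1+c_2$ then identifies $P_n$ with the triangular poset $\{(x,y) \in \mathbb{Z}^2 : 0 \le x \le y \le n-2\}$ under componentwise order.

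Next I would encode an ideal $I$ by its column-heights $f(y) := \#\{x : (x,y) \in I\} \in \{0,\ldots,y+1\}$ and the complementary deficits $\lambda_y := (y+1) - f(y)$, so that $|I| = \binom{n}{2} - \sum_y \lambda_y$. The order-ideal condition reduces to $f(y-1) \ge \min(f(y),y)$ for $y \ge 1$, and a case split on whether $\lambda_y$ is zero or positive shows that this is equivalent to the rule: $\lambda_y = 0$ forces $\lambda_{y-1} = 0$, while $\lambda_y \ge 1$ forces $\lambda_{y-1} < \lambda_y$. Hence the nonzero $\lambda_y$'s, read in order of increasing $y$, form a strictly increasing sequence in $\{1,\ldots,n-1\}$ preceded only by zeros. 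Mapping $I$ to $S_I := \{\lambda_y : \lambda_y > 0\}$ yields a bijection $J(P_n) \leftrightarrow 2^{\{1,\ldots,n-1\}}$, since any $k$-subset $\{s_1<\cdots<s_k\}$ fills the positions $\lambda_{n-1-k},\ldots,\lambda_{n-2}$ and automatically satisfies $s_i \le n-1-k+i$ by virtue of $s_i \le s_k - (k-i) \le n-1-k+i$.

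Substituting the complement $S \mapsto \{1,\ldots,n-1\} \setminus S$ then gives
\[F(J(P_n),q) = \sum_{S \subseteq \{1,\ldots,n-1\}} q^{\binom{n}{2} - \sum_{s \in S} s} = \sum_{T \subseteq \{1,\ldots,n-1\}} q^{\sum_{t \in T} t} = \prod_{j=1}^{n-1}(1+q^j),\]
and evaluating at $q=1$ recovers $|J(P_n)| = 2^{n-1}$. The main obstacle is the case analysis in the middle step: the equivalence between the local componentwise condition $f(y-1) \ge \min(f(y),y)$ and the global ``zeros-then-strictly-increasing'' shape of $(\lambda_y)$ hinges on correctly handling the boundary case $\lambda_y = 0$, where the minimum saturates at $y$ rather than at $f(y)$.
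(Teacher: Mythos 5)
Your argument is correct, but it is genuinely different from the one in the paper. The paper proceeds by induction on $n$: it singles out the element $a=(n-2)\overrightarrow{r}$, observes that its principal order ideal is a chain of $n-1$ elements whose complement in $P_n$ is a copy of $P_{n-1}$, and so splits $J(P_n)$ into ideals containing $a$ (contributing $q^{n-1}F(J(P_{n-1}),q)$) and ideals avoiding $a$ (contributing $F(J(P_{n-1}),q)$), which peels off the factor $(1+q^{n-1})$ one step at a time. You instead first identify the transitive closure of the red/green/blue edges with componentwise order on the triangle $\{(x,y):0\le x\le y\le n-2\}$ via $(x,y)=(c_2,c_1+c_2)$ --- a step the paper never makes explicit but which is worth recording, and which you justify correctly with the path of $\overrightarrow{r}$'s followed by $\overrightarrow{b}$'s --- and then build a single global bijection from $J(P_n)$ to subsets of $\{1,\dots,n-1\}$ by recording column deficits $\lambda_y$ and showing the ideal condition forces the sequence $(\lambda_y)$ to be a run of zeros followed by a strictly increasing positive sequence. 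Your case analysis around $\min(f(y),y)$ and the verification that any $k$-subset can be placed (via $s_i\le s_k-(k-i)\le n-1-k+i$) are both sound, and the complementation $S\mapsto\{1,\dots,n-1\}\setminus S$ turns the weight $\binom{n}{2}-\sum_{s\in S}s$ into the subset-sum generating function $\prod_{j=1}^{n-1}(1+q^j)$. What the paper's induction buys is brevity and direct reuse of the self-similar structure $P_{n-1}\subset P_n$; what your bijection buys is an explanation of \emph{why} the answer factors as a product over $j$ --- each factor $(1+q^j)$ literally corresponds to the presence or absence of $j$ in a subset --- together with an explicit combinatorial model (the componentwise-ordered staircase) that could be reused elsewhere in the paper.
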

\begin{proof}
The proof is by induction on $n$. Suppose $P_{n-1}$ has $2^{n-2}$ order ideals and rank generating function $\prod_{j=1}^{n-2} (1+q^j)$. Let $a$ be the element of $P_n$ with the largest $x$ coordinate. Thus $a=(n-2)\overrightarrow{r}$. Let $I$ be an order ideal of $P_n$. If $a\in I$ then the $n-2$ poset elements $c\overrightarrow{r}$ with $c<n-2$ are also in $I$, leaving a copy of $P_{n-1}$ from which to choose more elements to be in $I$ (and yielding a weight of $q^{n-1}$ for the principle order ideal of $a$). So there are $2^{n-1}$ order ideals of $P_n$ which include $a$. Now suppose $a\notin I$. Then any element larger than $a$ in the partial order is also not in $I$ (that is any element of the form $c_1\overrightarrow{r}+c_2\overrightarrow{g}$ with $c_1+c_2=n-2$), so $I$ must be an order ideal of the subposet consisting of all elements of $P_n$ not greater than or equal to $a$ (that is, elements of the form $c_1\overrightarrow{r}+c_2\overrightarrow{g}$ with $c_1+c_2<n-2$). This subposet is $P_{n-1}$. So there are $2^{n-2}$ order ideals of $P_n$ not containing $a$. Thus there are $2\cdot 2^{n-2} = 2^{n-1}$ order ideals of $P_n$ and the rank generating function is 
\[F(P_n,q)=(1+q^{n-1})\prod_{j=1}^{n-2} (1+q^j) = \prod_{j=1}^{n-1} (1+q^j).\]
\end{proof}

\begin{figure}[htbp]
\centering
\includegraphics[scale=0.64]{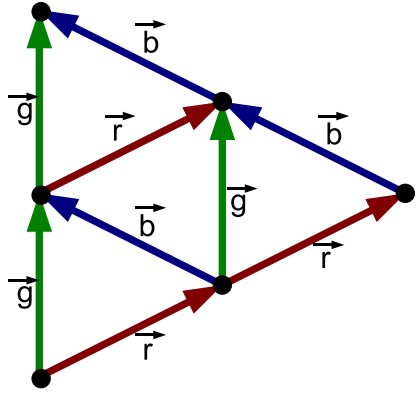}
\caption{The Hasse diagram of $P_4$ with each vector labeled}
\label{fig:p4}
\end{figure}

Let $P_n(S)$ where $S\subseteq \{r,b,g\}$ denote the poset with all the same elements as $P_n$, but with certain edges removed so that the Hasse diagram edges of $P_n(S)$ are only the vectors whose corresponding letters are in $S$. Note that $P_n(\{r,b\})$ has the same number of order ideals as $P_n$ since whenever two poset elements are connected by the vector $\overrightarrow{g}$ there is also a path from one to the other consisting of the vectors $\overrightarrow{r}$ and $\overrightarrow{b}$. From our construction we know that the other two posets $P_n(S)$ with $|S|=2$, $P_n(\{b,g\})$ and $P_n(\{r,g\})$, are dual posets (see Figure~\ref{fig:p4}).We now count the order ideals of the isomorphic posets $P_n(\{b,g\})$ and $P_n^*(\{r,g\})$, obtaining a famous number as our answer.

\begin{theorem}
\label{lemma:cat}
\[|J(P_n(\{b,g\}))|=|J(P_n(\{r,g\}))|=C_n=\frac{1}{n+1}{2n \choose n}.\] 
\[F(J(P_n(\{b,g\})),q)=F(J(P_n^*(\{r,g\})))=C_n(q)\]
where $C_n(q)$ is the Carlitz-Riordan $q$-Catalan number defined by the recurrence
\begin{equation}
\label{eq:carlitz}
C_n(q)=\displaystyle\sum_{k=1}^n q^{k-1} C_{k-1}(q) C_{n-k}(q) 
\end{equation}
with initial conditions $C_0(q)=C_1(q)=1$.
\end{theorem}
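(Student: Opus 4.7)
My plan is to prove the formula for $P_n(\{b,g\})$ directly and then invoke the duality $P_n(\{b,g\}) \cong P_n^*(\{r,g\})$ noted just above to transfer the result. First I would unpack the structure of $P_n(\{b,g\})$: writing the elements as coordinate pairs $(c_1,c_2)$ with $c_1,c_2 \geq 0$ and $c_1+c_2 \leq n-2$, both the $\overrightarrow{g}$ and $\overrightarrow{b}$ edges raise $c_2$ by one, so each column $\{(c_1,c_2) : c_2 = 0,1,\ldots,n-2-c_1\}$ is a chain. An order ideal $I$ therefore restricts in column $c_1$ to an initial segment of length $\lambda_{c_1} \in \{0,1,\ldots,n-1-c_1\}$. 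The $\overrightarrow{b}$-edge $(c_1,c_2) \to (c_1-1,c_2+1)$ forces the cross-column constraint $\lambda_{c_1-1} \leq \lambda_{c_1}+1$, since if $(c_1-1,\lambda_{c_1-1}-1) \in I$ then by down-closedness so does $(c_1,\lambda_{c_1-1}-2)$, which lies below it via $\overrightarrow{b}$. This bijects order ideals with sequences $(\lambda_0,\ldots,\lambda_{n-2})$ satisfying these bounds, with $|I| = \sum \lambda_{c_1}$.

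Next I would show $f_n(q) := F(J(P_n(\{b,g\})),q) = C_n(q)$ by induction on $n$, with base cases $f_0(q) = f_1(q) = 1$ immediate because $P_0$ and $P_1$ are empty. Using the convention $\lambda_{n-1} = 0$, for $n \geq 2$ let $m$ be the smallest index with $\lambda_m = 0$; then $m \in \{0,1,\ldots,n-1\}$ and the cross-column constraint at $c_1 = m+1$ is automatic, decoupling the sequence into a ``bottom-left'' piece $(\lambda_0,\ldots,\lambda_{m-1})$ and an independent ``top-right'' piece $(\lambda_{m+1},\ldots,\lambda_{n-2})$. Relabeling the top-right by $\lambda'_{c'_1} = \lambda_{m+1+c'_1}$ shows it is exactly an order ideal of $P_{n-m-1}(\{b,g\})$, contributing $f_{n-m-1}(q)$. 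On the bottom-left, iterating $\lambda_{c_1-1} \leq \lambda_{c_1}+1$ from $\lambda_m = 0$ forces $\lambda_{c_1} \leq m-c_1$, and combined with $\lambda_{c_1} \geq 1$ the substitution $\nu_{c_1} = m - c_1 - \lambda_{c_1}$ produces a weakly decreasing sequence with $\nu_{c_1} \in \{0,1,\ldots,m-1-c_1\}$, i.e., a sub-partition of $\delta_m := (m-1,m-2,\ldots,1)$. The bottom-left contribution to $|I|$ is $\binom{m+1}{2} - |\nu| = m + (\binom{m}{2} - |\nu|)$, so summing over $\nu$ yields $q^m f_m(q)$ by the inductive hypothesis. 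Combining the pieces and setting $k = m+1$ gives
\[ f_n(q) = \sum_{m=0}^{n-1} q^m f_m(q) f_{n-m-1}(q) = \sum_{k=1}^n q^{k-1} f_{k-1}(q) f_{n-k}(q), \]
which is exactly the Carlitz-Riordan recurrence~(\ref{eq:carlitz}). Specializing at $q = 1$ gives $|J(P_n(\{b,g\}))| = C_n$, and the duality $P_n(\{b,g\}) \cong P_n^*(\{r,g\})$ transfers both statements to the other poset.

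The main obstacle is the bookkeeping in the bottom-left analysis: one must verify that the chain of constraints forces $\lambda_{c_1} \leq m - c_1$ uniformly, that the change of variables $\nu_{c_1} = m - c_1 - \lambda_{c_1}$ lands precisely inside sub-partitions of $\delta_m$ with the correct weakly-decreasing property, and that the weight factor $q^m$ arises from the identity $\binom{m+1}{2} - \binom{m}{2} = m$. Getting these off-by-one indices right is the delicate part; once aligned, the Carlitz-Riordan recurrence falls out of the decomposition immediately.
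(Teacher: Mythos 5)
Your route is genuinely different from the paper's: the paper proves this theorem by a single explicit bijection between $J(P_n(\{b,g\}))$ and Dyck paths, matching $|I|$ with the Carlitz--Riordan area statistic (complete rhombi under the path), whereas you encode an order ideal by its column profile $(\lambda_0,\ldots,\lambda_{n-2})$ and derive the defining recurrence~(\ref{eq:carlitz}) directly by splitting at the first empty column. Your encoding is correct (the only constraints are $0\le\lambda_{c_1}\le n-1-c_1$ and $\lambda_{c_1-1}\le\lambda_{c_1}+1$, since the Hasse edges of $P_n(\{b,g\})$ are exactly the green chain edges and the blue edges $(c_1,c_2)\to(c_1-1,c_2+1)$), the decoupling at $m$ is right, and the top-right piece is verbatim an order ideal of $P_{n-m-1}(\{b,g\})$. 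This would give a more self-contained proof than the paper's, which leans on the Carlitz--Riordan characterization of $C_n(q)$ as an area generating function.

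There is, however, a gap in the bottom-left step. You assert that $\sum_{\nu} q^{\binom{m+1}{2}-|\nu|}$ over sub-partitions $\nu\subseteq\delta_m$ equals $q^m f_m(q)$ ``by the inductive hypothesis.'' The inductive hypothesis concerns $f_m(q)=F(J(P_m(\{b,g\})),q)$, i.e.\ sequences with the constraint $\mu_{c_1-1}\le\mu_{c_1}+1$ weighted by $\sum\mu_{c_1}$; your $\nu$'s are \emph{weakly decreasing} sequences weighted by co-area, a different family with a different defining inequality. The two generating functions do coincide, but not tautologically: the cell poset of $\delta_m$ underlying the $\nu$-parametrization is $P_m(\{r,g\})$, not $P_m(\{b,g\})$, and $F(J(P_m(\{r,g\})),q)$ is the \emph{reverse} of $C_m(q)$ (already a different polynomial for $m=3$), so one must pass through the duality $P_m^*(\{r,g\})\cong P_m(\{b,g\})$ to land on the right statistic --- an identification you never make at this point. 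The cleanest repair is to drop $\nu$ altogether: since $\lambda_{c_1}\ge 1$ for $c_1<m$ and $\lambda_{m-1}=1$, set $\lambda'_{c_1}=\lambda_{c_1}-1$. The constraints become $\lambda'_{c_1}\ge 0$, $\lambda'_{m-1}=0$, and $\lambda'_{c_1-1}\le\lambda'_{c_1}+1$, which (after chaining down from $\lambda'_{m-1}=0$ to recover the column bounds $\lambda'_{c_1}\le m-1-c_1$) are exactly the defining data of an order ideal of $P_m(\{b,g\})$, and the weight drops by exactly $m$. With that substitution the inductive hypothesis applies legitimately and your recurrence, hence the theorem, follows.
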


We give a proof below, but also note this is a well-known fact since $P_n(r,g)$ is also known as the root poset of type $A_{n-1}$, whose order ideals are counted by the Catalan number $C_n$.

\begin{proof}
We prove this theorem by constructing a bijection between order ideals of $P_n(\{b,g\})$ and Dyck paths of $2n$ steps. A Dyck path is a lattice path in the plane from $(0,0)$ to $(2n,0)$ with steps $(1,1)$ and $(1,-1)$ which never goes below the $x$-axis. Dyck paths from $(0,0)$ to $(2n,0)$ are counted by the Catalan number $C_n$~\cite{STANLEY}. To construct our bijection we will rotate the axes of our Dyck path slightly as in Figure~\ref{fig:dyck}.  The Carlitz-Riordan $q$-Catalan numbers $C_n(q)$ weight Dyck paths by $q$ to the power of the number of complete unit squares under the path (or rhombi in our rotated picture)~\cite{CARLITZRIORDAN}. Our claim is that Dyck paths with this weight are in bijection with order ideals of the posets $P_n(\{b,g\})$ weighted by number of elements in the order ideal. The bijection proceeds by overlaying the Dyck path on $P_n(\{b,g\})$ as in Figure~\ref{fig:dyck} and circling every poset element which is strictly below the path. Each circled element is the southeast corner of a unit rhombus under the Dyck path. Thus the order ideal consisting of all the circled elements corresponds to the Dyck path and the weight is preserved. Therefore $|J(P_n(\{b,g\}))|=C_n$ and the rank generating function $F(J(P_n(\{b,g\})),q)$ equals $C_n(q)$. Then since $P_n(\{b,g\})\simeq P_n^*(\{r,g\})$ the result follows by poset isomorphism.
\end{proof}

\begin{figure}[htb]
\centering
\includegraphics[scale=0.52]{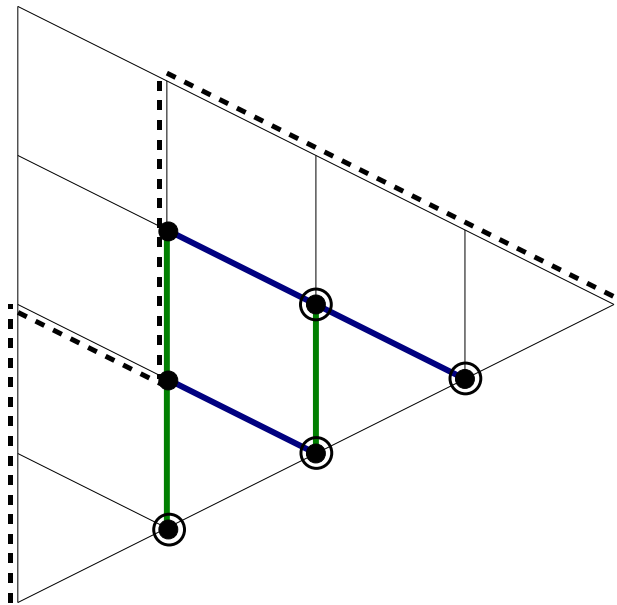}
\caption[A Dyck path overlaid on $P_4(\{b,g\})$]{A Dyck path (dashed line) overlaid on $P_4(\{b,g\})$. The order ideal corresponding to this path consists of all the circled poset elements.}
\label{fig:dyck}
\end{figure}

We now construct the tetrahedral poset $T_n$ as a three-dimensional analogue of the poset $P_n$. 
Define the unit vectors $\overrightarrow{y}=(\frac{\sqrt{3}}{6},\frac{1}{2},\frac{\sqrt{6}}{3})$, $\overrightarrow{o}=(\frac{-\sqrt{3}}{3},0,\frac{\sqrt{6}}{3})$, $\overrightarrow{s}=(\frac{-\sqrt{3}}{6},\frac{1}{2},-\frac{\sqrt{6}}{3})$. 
We use these vectors along with the previously defined vectors $\overrightarrow{r}$, $\overrightarrow{g}$, and $\overrightarrow{b}$ to define $T_n$ by drawing its Hasse diagram. First let the elements of $T_n$ be defined as the coordinates of all the points reached by linear combinations of $\overrightarrow{r}$, $\overrightarrow{g}$, and $\overrightarrow{y}$. Thus as a set $T_n =\{c_1 \overrightarrow{r} +c_2 \overrightarrow{g}+c_3\overrightarrow{y}, \mbox{ } c_1,c_2,c_3\in \mathbb{Z}_{\ge 0}, \mbox{ } c_1+c_2+c_3\le n-2\}$. To obtain the partial order on $T_n$ let all the vectors $\overrightarrow{r}$, $\overrightarrow{g}$, and $\overrightarrow{y}$ used to define the elements of $T_n$ be directed edges in the Hasse diagram, and additionally draw into the Hasse diagram as directed edges the vectors $\overrightarrow{b}$, $\overrightarrow{o}$, and $\overrightarrow{s}$ between poset elements wherever possible. See Figure~\ref{fig:tetravec} for an example of this construction.

\begin{figure}[htbp]
\centering
\includegraphics[scale=.9]{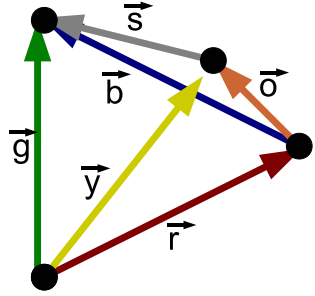}
\caption{The Hasse diagram of $T_3$ with each vector labeled}
\label{fig:tetravec}
\end{figure}

This construction yields a poset whose Hasse diagram is in the shape of a tetrahedron. We now think of each edge in the Hasse diagram as having a color corresponding to the first letter of the unit vector as follows: $\overrightarrow{r}$ equals red, $\overrightarrow{o}$ equals orange, $\overrightarrow{y}$ equals yellow, $\overrightarrow{g}$ equals green, $\overrightarrow{b}$ equals blue, and $\overrightarrow{s}$ equals silver (see Figure~\ref{fig:tet}).
The partial order of $T_n$ is defined so that the corner vertex with edges colored red, green, and yellow is the smallest element, the corner vertex with edges colored silver, green, and blue is the largest element, and the other two corner vertices are ordered such that the one with silver, yellow, and orange edges is above the one with orange, red, and blue edges.
$T_n$ restricted to only the red, green, and blue edges is isomorphic to the disjoint sum of $P_j$ for $2\le j\le n$. Thus $T_n$ can be thought of as the poset which results from beginning with the poset $P_n$, overlaying the posets $P_{n-1},P_{n-2},\ldots,P_3,P_2$ successively, and connecting each $P_{i}$ to $P_{i-1}$ in a certain way by the orange, yellow, and silver edges.

\begin{figure}[htbp]
\centering
\includegraphics[scale=0.55]{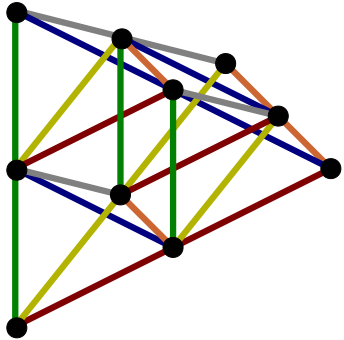}
\caption{The Hasse diagram of $T_4$}
\label{fig:tet}
\end{figure}

We now examine the number of order ideals of the elements of $T_n$ when we drop the ordering relations corresponding to edges of certain colors in the Hasse diagram. There are $2^6 = 64$ possible combinations of the six colors, but some of the colors are induced by the combination of others, so the number of distinct possibilities to consider is reduced. 
We wish to consider only subsets of the colors which include all the induced edges. Thus whenever we include red and blue we must also include green. Similarly, orange and silver induce blue, silver and yellow induce green, and red and orange induce yellow. We summarize this in the following definition.
\begin{definition}
Let a subset $S$ of the six colors $\{$red, blue, green, orange, yellow, silver$\}$ (abbreviated $\{r,b,g,o,y,s\}$) be called \emph{admissible} if all of the following are true:
\begin{itemize}
\item If $\{r,b\}\subseteq S$ then $g\in S$ 
\item If $\{o,s\}\subseteq S$ then $b\in S$ 
\item If $\{s,y\}\subseteq S$ then $g\in S$ 
\item If $\{r,o\}\subseteq S$ then $y\in S$
\end{itemize}
\end{definition}

This reduces the 64 possibilities to 40 admissible sets of colors to investigate.
Surprisingly, for almost all of these admissible sets of colored edges, there exists a nice product formula for the number of order ideals of $T_n$ restricted to these edges and a bijection between these order ideals and an interesting set of combinatorial objects.
 
Our notational convention will be such that given an admissible subset $S$ of the colors $\{r,b,g,o,y,s\}$, $T_n(S)$ denotes the poset formed by the vertices of $T_n$ together with all the edges whose colors are in $S$. The induced colors will be in parentheses.
We summarize below (with more explanation to come in this and the following sections) the posets $T_n(S)$ and lattices of order ideals $J(T_n(S))$ associated to each of the 40 sets of colors $S$, grouping them according to the number of colors in $S$. See Figure~\ref{fig:bigbigpic}.

We now state the product formulas for the number of order ideals of $T_n(S)$ for $S$ an admissible set of colors, along with the rank generating functions $F(P,q)$ wherever we have them. For the sake of comparison we have also written each formula as a product over the same indices $1\le i\le j\le k\le n-1$ in a way which is reminiscent of the MacMahon box formula. See Figures~\ref{fig:zeroone} through \ref{fig:tetra} for the Hasse diagram of a poset from each class for $n=4$.

\begin{theorem}
\[
F(J(T_n(\emptyset)),q)=(1+q)^{{n+1\choose 3}}=\displaystyle\prod_{1\le i\le j\le k\le n-1} \frac{[2]_q}{[1]_q}.
\]
\end{theorem}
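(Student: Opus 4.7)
The plan is very short because $T_n(\emptyset)$ is by definition the poset on the vertex set of $T_n$ with \emph{no} order relations, i.e.\ an antichain. For any antichain $A$, every subset is an order ideal, so $J(A)$ is the Boolean lattice $2^{A}$ and its rank generating function (by size of the ideal) is $(1+q)^{|A|}$. Hence the whole content of the theorem reduces to counting $|T_n|$ and then rewriting that count as a sum over weakly increasing triples.

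First I would recall from the construction that
\[
T_n \;=\; \{c_1\overrightarrow{r}+c_2\overrightarrow{g}+c_3\overrightarrow{y}\,:\, c_1,c_2,c_3\in\mathbb{Z}_{\ge 0},\ c_1+c_2+c_3\le n-2\},
\]
so $|T_n|$ is the number of nonnegative integer solutions of $c_1+c_2+c_3\le n-2$. By a standard stars-and-bars argument (introducing a slack variable $c_4\ge 0$ with $c_1+c_2+c_3+c_4=n-2$) this count equals $\binom{n+1}{3}$. Combined with the antichain observation, we immediately get $F(J(T_n(\emptyset)),q)=(1+q)^{\binom{n+1}{3}}$.

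For the second expression, I would simply note that $[2]_q/[1]_q = 1+q$, so
\[
\prod_{1\le i\le j\le k\le n-1}\frac{[2]_q}{[1]_q} \;=\; (1+q)^{N},
\]
where $N$ is the number of weakly increasing triples $1\le i\le j\le k\le n-1$. The number of such triples is the multiset coefficient $\binom{(n-1)+2}{3}=\binom{n+1}{3}$, matching the exponent obtained above. This identifies the two sides and completes the proof.

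There is no real obstacle here; the theorem is essentially a bookkeeping identity. The only thing to be careful about is matching the indexing conventions (that $T_n$ corresponds to $c_1+c_2+c_3\le n-2$ rather than $\le n$, and that the product is indexed by $1\le i\le j\le k\le n-1$), and verifying that both counts give the same $\binom{n+1}{3}$. This uniform product form is presumably included so that the reader can compare with the product formulas for the less trivial $T_n(S)$ appearing later.
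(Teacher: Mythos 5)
Your proof is correct and matches the paper's, which simply observes that $J(T_n(\emptyset))$ is the Boolean algebra of rank $\binom{n+1}{3}$; you have just filled in the routine counting that $|T_n|=\binom{n+1}{3}$ and that the product index set has the same cardinality. No issues.
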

\begin{proof}
$J(T_n(\emptyset))$ is simply the Boolean algebra of rank ${n+1 \choose 3}$ whose generating function is as above.
\end{proof}

\begin{figure}[hbtp]
\centering
$\begin{array}{lcccr}
\includegraphics[scale=0.45]{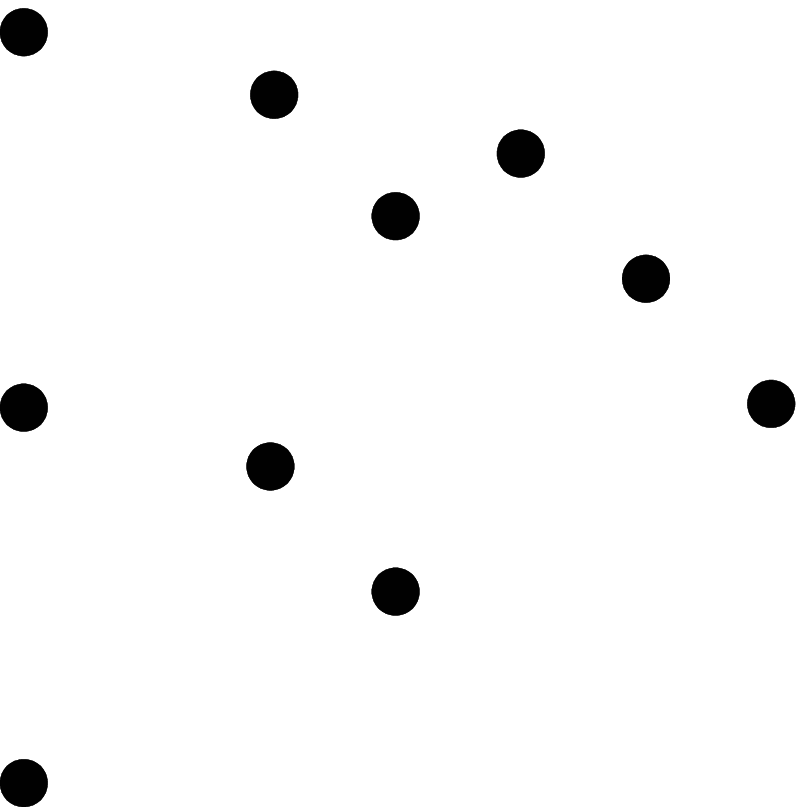}&&
&&
\includegraphics[scale=0.45]{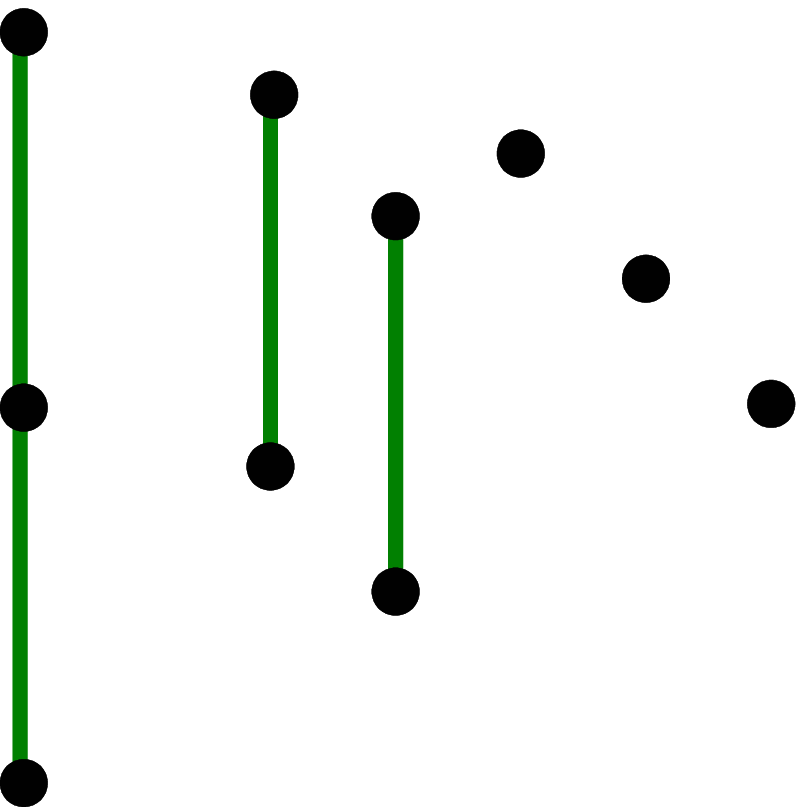}
\end{array}$
\caption{Left: $T_4(\emptyset)$ \hspace{.2cm} Right: $T_4(\{g\})$}
\label{fig:zeroone}
\end{figure}

\begin{theorem}
For any color $x\in \{r,b,y,g,o,s\}$
\[
F(J(T_n(\{x\})),q)=\prod_{j=1}^n j!_q=\displaystyle\prod_{1\le i\le j\le k\le n-1} \frac{[i+1]_q}{[i]_q}.
\]
\end{theorem}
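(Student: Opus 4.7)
The plan is to exploit the fact that when only one edge color is retained, the Hasse diagram of $T_n(\{x\})$ consists of parallel directed edges, all in the direction $\overrightarrow{x}$. Consequently $T_n(\{x\})$ decomposes as a disjoint union of chains (the $\overrightarrow{x}$-orbits of vertices), and the rank generating function of its lattice of order ideals is the product of the generating functions of the individual chains. Since a chain on $\ell$ elements has rank generating function $[\ell+1]_q$, the whole argument reduces to enumerating the chain lengths and checking the algebraic identity.

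Concretely, first I would treat the case $x=r$. A red chain fixes $(c_2,c_3)$ and lets $c_1$ range over $0,1,\ldots,n-2-c_2-c_3$, so the chain has $n-1-c_2-c_3$ elements. Grouping by chain length $\ell$, the number of pairs $(c_2,c_3)$ with $c_2+c_3=n-1-\ell$ is $n-\ell$. Hence there are $n-\ell$ chains of length $\ell$ for each $\ell\in\{1,\ldots,n-1\}$, and
\[
F(J(T_n(\{r\})),q)=\prod_{\ell=1}^{n-1}[\ell+1]_q^{\,n-\ell}=\prod_{j=2}^{n}[j]_q^{\,n-j+1}=\prod_{j=1}^{n}j!_q,
\]
using $[1]_q=1$ in the last step. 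Next I would argue that the chain-length multiset is color-independent: using the defining identities $\overrightarrow{b}=\overrightarrow{g}-\overrightarrow{r}$, $\overrightarrow{o}=\overrightarrow{y}-\overrightarrow{r}$, $\overrightarrow{s}=\overrightarrow{g}-\overrightarrow{y}$, one reads off that a $\overrightarrow{b}$-chain fixes $(c_1+c_2,\,c_3)$, an $\overrightarrow{o}$-chain fixes $(c_1+c_3,\,c_2)$, and an $\overrightarrow{s}$-chain fixes $(c_2+c_3,\,c_1)$, and in each case the same count of $n-\ell$ chains of length $\ell$ appears. (Alternatively, this is the tetrahedral symmetry permuting the six edge directions, which $T_n$ clearly carries.)

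Finally I would verify the MacMahon-style expression: for the product on the right,
\[
\prod_{1\le i\le j\le k\le n-1}\frac{[i+1]_q}{[i]_q}=\prod_{i=1}^{n-1}\left(\frac{[i+1]_q}{[i]_q}\right)^{\binom{n-i+1}{2}},
\]
and telescoping shows the exponent of $[j]_q$ in the resulting product is $\binom{n-j+2}{2}-\binom{n-j+1}{2}=n-j+1$ for $j\ge 2$ (the $[1]_q$ contribution is irrelevant). This matches the exponent $n-j+1$ of $[j]_q$ in $\prod_{j=1}^n j!_q$ derived above, completing the identity. The only potentially tricky step is the chain-length bookkeeping for the induced colors $b,o,s$, but this is immediate from the three vector identities; the rest of the proof is algebraic.
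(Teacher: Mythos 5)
Your proof is correct and takes essentially the same route as the paper's (much terser) argument: decompose $T_n(\{x\})$ into a disjoint union of chains, observe that there are $n-\ell$ chains with $\ell$ elements for each $1\le \ell\le n-1$, and multiply the chain generating functions $[\ell+1]_q$. The explicit identification of the chains for the induced colors $b,o,s$ via the vector identities and the verification of the two product formulas are details the paper leaves to the reader, and you have carried them out correctly.
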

\begin{proof}
$T_n(\{x\})$ is the disjoint sum of $n-j$ chains of length $j-1$ as $j$ goes from 1 to $n-1$. So the number of order ideals is the product of the number of order ideals of each chain, which can be expressed using factorials. The $q$-case is also clear.
\end{proof}

The proofs of the following two theorems about the two-color posets are in Section \ref{sec:two}.
\begin{theorem}
\label{thm:twoopp}
If $S\in \{\{g,o\},\{r,s\},\{b,y\}\}$ then
\[
F(J(T_n(S)),q)=\displaystyle\prod_{j=1}^n {n\brack j}_q=\displaystyle\prod_{1\le i\le j\le k\le n-1} \frac{[j+1]_q}{[j]_q}.
\]
\end{theorem}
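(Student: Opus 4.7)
The plan is to exploit a simple conservation-law decomposition. Each of the three two-color sets $S$ consists of vectors that preserve a single linear function of the coordinates $(c_1, c_2, c_3)$, which will split $T_n(S)$ into a disjoint union of products of two chains.

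I would begin with $S = \{g, o\}$. A direct coordinate computation shows that $\overrightarrow{o} = \overrightarrow{y} - \overrightarrow{r}$, so an orange edge sends $(c_1, c_2, c_3)$ to $(c_1-1, c_2, c_3+1)$. Both $\overrightarrow{g}$ and $\overrightarrow{o}$ therefore preserve the quantity $c_1 + c_3$, so the vertex set of $T_n$ partitions into fibers $F_s = \{(c_1, c_2, c_3) : c_1+c_3 = s\}$ for $s \in \{0, 1, \ldots, n-2\}$, and no edge of $T_n(\{g, o\})$ connects different fibers. The fiber $F_s$ consists of the lattice points $(c_1, c_2, s - c_1)$ with $c_1 \in \{0, \ldots, s\}$ and $c_2 \in \{0, \ldots, n-2-s\}$, and as a subposet of $T_n(\{g,o\})$ it is isomorphic to the product of two chains with $s+1$ and $n-1-s$ elements.

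Because the fibers are disconnected, $J(T_n(\{g,o\}))$ is the direct product of the $J(F_s)$, so the rank generating function factors:
\[
F(J(T_n(\{g,o\})), q) = \prod_{s=0}^{n-2} F(J(F_s), q) = \prod_{s=0}^{n-2} {n \brack s+1}_q = \prod_{j=1}^{n} {n \brack j}_q.
\]
The middle equality uses the classical fact that the order ideals of a product of two chains with $a$ and $b$ elements are in bijection with Young diagrams fitting inside an $a \times b$ rectangle, giving rank generating function ${a+b \brack a}_q$; the last equality uses ${n \brack n}_q = 1$.

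The remaining two cases are handled identically by replacing the role of $c_1 + c_3$ with another conserved coordinate sum. The identities $\overrightarrow{s} = \overrightarrow{g} - \overrightarrow{y}$ and $\overrightarrow{b} = \overrightarrow{g} - \overrightarrow{r}$ show that red and silver edges together preserve $c_2 + c_3$, while blue and yellow together preserve $c_1 + c_2$, giving the same fiber decomposition and the same product. The one point that needs to be checked carefully is that no induced colors slip into $T_n(S)$ and break disconnectedness; but each of these three $S$ is admissible and triggers none of the four forcing conditions in the definition, so no extra colors appear, and the main obstacle — confirming that the partition into chain products really is a disjoint union of full subposets — reduces to this elementary verification.
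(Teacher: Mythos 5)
Your proof is correct. The decomposition you use --- splitting $T_n(\{g,o\})$ into the disconnected fibers $c_1+c_3=s$ --- is the same independence that drives the paper's proof, but you reach it by a genuinely different route. The paper passes through the array bijection of Proposition~\ref{prop:yofs}: an order ideal of $T_n(\{g,o\})$ becomes an array $y\in Y_n(\{g,o\})$ whose columns are unconstrained against one another, and each strictly increasing column of length $j$ is read as a $j$-subset of $\{1,\dots,n\}$, $q$-counted by ${n\brack j}_q$; the cases $\{r,s\}$ and $\{b,y\}$ are then dispatched by poset isomorphism. You instead stay entirely inside the poset: each fiber is a product of two chains with $s+1$ and $n-1-s$ elements, whose lattice of order ideals is the lattice of partitions in a box with rank generating function ${n\brack s+1}_q$, and the three color sets are treated uniformly by exhibiting the conserved linear functional for each ($c_1+c_3$, $c_2+c_3$, $c_1+c_2$ respectively, using $\overrightarrow{o}=\overrightarrow{y}-\overrightarrow{r}$, $\overrightarrow{s}=\overrightarrow{g}-\overrightarrow{y}$, $\overrightarrow{b}=\overrightarrow{g}-\overrightarrow{r}$). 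What your version buys is self-containedness (no appeal to the array machinery or to explicit isomorphisms deferred to the thesis) and a symmetric treatment of all three cases; what the paper's version buys is consistency with the array formalism that it reuses for the three-, four-, and five-color theorems. Your check that $\{g,o\}$, $\{r,s\}$, $\{b,y\}$ trigger none of the admissibility forcings, so no induced edges reconnect the fibers, is exactly the point that needed verifying.
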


\begin{theorem}
\label{thm:twoadj}
If $S_1\in \{\{b,g\},\{b,s\},\{y,o\},\{g,s\}\}$ and $S_2\in \{\{r,y\},\{r,g\},\{y,g\},$ $\{b,o\}\}$ 
then
\[
|J(T_n(S_1))|=|J(T_n(S_2))|=\displaystyle\prod_{j=1}^n C_j =\displaystyle\prod_{j=1}^n \frac{1}{j+1} {2j \choose j}=\displaystyle\prod_{1\le i\le j\le k\le n-1} \frac{i+j+2}{i+j},
\]
where $C_j$ is the $j$th Catalan number. Also,
\[
F(J(T_n(S_1)),q)=F(J(T_n^*(S_2)),q)=\displaystyle\prod_{j=1}^n C_j(q),
\]
where the $C_j(q)$ are the $j$th Carlitz-Riordan $q$-Catalan numbers defined in Equation~(\ref{eq:carlitz}).
\end{theorem}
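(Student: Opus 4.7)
The plan is to decompose each $T_n(S_1)$ and $T_n(S_2)$ as a disjoint union of copies of $P_j(\{b,g\})$ (respectively $P_j(\{r,g\})$) for $2 \le j \le n$, and then invoke Theorem~\ref{lemma:cat} together with the multiplicativity of $J(\cdot)$ and $F(J(\cdot),q)$ under disjoint unions of posets.

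To make each decomposition transparent I would first introduce barycentric coordinates on $T_n$: set $d_1 = c_1$, $d_2 = c_2$, $d_3 = c_3$, and $d_4 = n - 2 - c_1 - c_2 - c_3$, so that the elements of $T_n$ are identified with nonnegative integer quadruples summing to $n-2$. Rewriting each unit vector in these coordinates shows that every colored edge transfers one unit between exactly two of the four coordinates:
\[
\vec{r}\colon d_4 \mapsto d_1, \quad
\vec{g}\colon d_4 \mapsto d_2, \quad
\vec{y}\colon d_4 \mapsto d_3, \quad
\vec{b}\colon d_1 \mapsto d_2, \quad
\vec{s}\colon d_3 \mapsto d_2, \quad
\vec{o}\colon d_1 \mapsto d_3.
\]
A direct check then shows that each of the eight admissible two-color sets in $S_1 \cup S_2$ preserves exactly one of the four coordinates; for instance $S_1 = \{b,g\}$ preserves $d_3$, while $S_2 = \{b,o\}$ preserves $d_4$.

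Since every edge in $T_n(S)$ fixes the invariant coordinate, no comparability in $T_n(S)$ crosses between level sets, so $T_n(S)$ is literally the disjoint union of its slices. For $S_1 = \{b,g\}$, the slice $d_3 = k$ consists of triples $(d_1, d_2, d_4)$ summing to $n - 2 - k$ with edges $\vec{b}\colon d_1 \mapsto d_2$ and $\vec{g}\colon d_4 \mapsto d_2$, which is precisely $P_{n-k}(\{b,g\})$. The remaining three choices of $S_1$ yield slices of the same shape after relabeling the surviving coordinates, so in every case $T_n(S_1) \cong \bigsqcup_{j=2}^n P_j(\{b,g\})$. Combined with Theorem~\ref{lemma:cat} and $C_1 = 1$, this gives both $|J(T_n(S_1))| = \prod_{j=1}^n C_j$ and $F(J(T_n(S_1)),q) = \prod_{j=1}^n C_j(q)$.

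The same analysis applied to any $S_2$ produces $T_n(S_2) \cong \bigsqcup_{j=2}^n P_j(\{r,g\})$, which immediately yields the product formula for $|J(T_n(S_2))|$. For the rank generating function we dualize: $T_n^*(S_2) \cong \bigsqcup_{j=2}^n P_j^*(\{r,g\})$, and Theorem~\ref{lemma:cat} identifies $P_j^*(\{r,g\})$ with $P_j(\{b,g\})$, so $F(J(T_n^*(S_2)),q) = \prod_{j=1}^n C_j(q)$ as claimed. The only real obstacle is case-by-case bookkeeping: identifying the invariant coordinate for each of the eight color pairs and checking that the induced poset on a slice really matches $P_{n-k}(\{b,g\})$ or $P_{n-k}(\{r,g\})$ up to relabeling. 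Once the barycentric picture is in hand, both checks reduce to mechanical verification.
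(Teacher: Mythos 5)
Your proposal is correct and follows essentially the same route as the paper: decompose $T_n(S_1)$ (resp.\ $T_n(S_2)$) as the disjoint sum of $P_j(\{b,g\})$ (resp.\ $P_j(\{r,g\})$) for $2\le j\le n$, apply Theorem~\ref{lemma:cat} to each summand, and transfer to the remaining color pairs by poset isomorphism and duality. The only difference is that you verify the decomposition explicitly via barycentric coordinates, whereas the paper reads it off directly from the layered construction of $T_n$ out of the $P_j$'s.
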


\begin{figure}[htbp]
\centering
$\begin{array}{lcccr}
\includegraphics[scale=0.45]{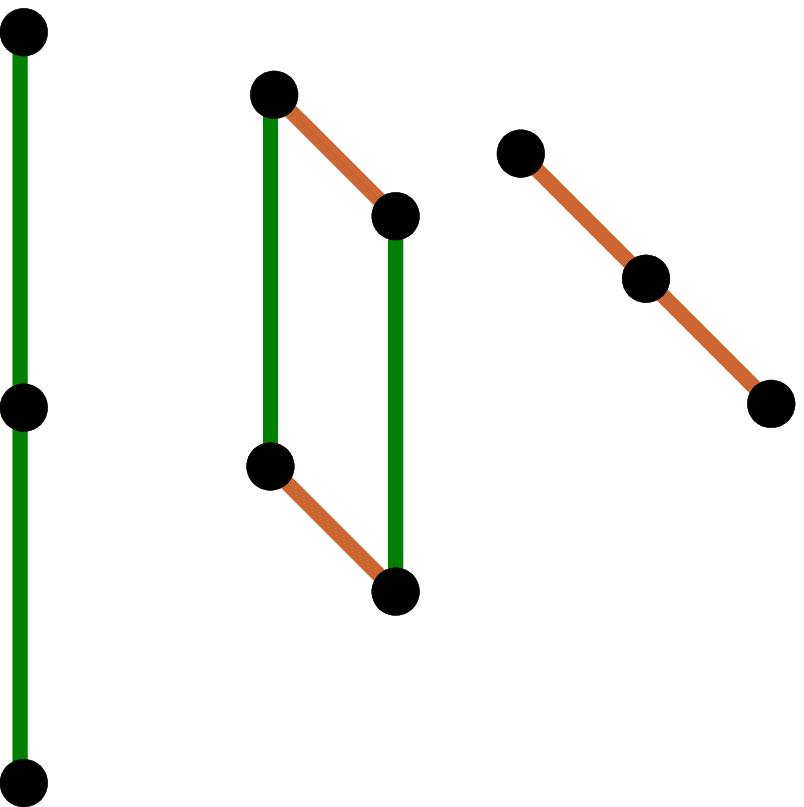}&&
&&
\includegraphics[scale=0.45]{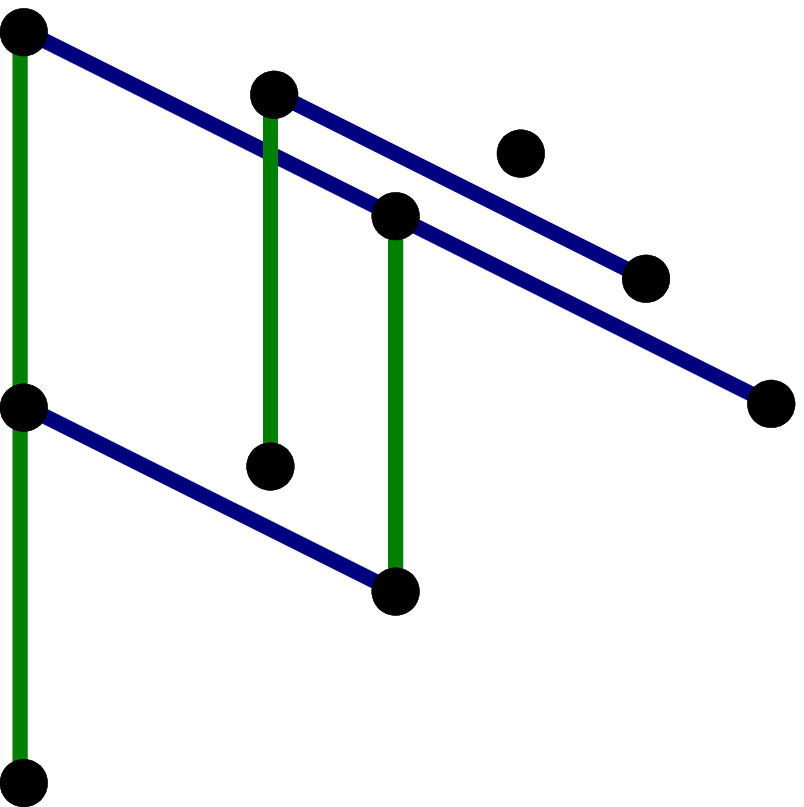}
\end{array}$
\caption{Left: Binomial poset $T_4(\{o,g\})$ \hspace{.2cm} Right: Catalan poset $T_4(\{b,g\})$}
\label{fig:two}
\end{figure}

The proof of the next theorem on the three-color posets and the relationship between these order ideals and SSYT and tournaments is the subject of Section~\ref{sec:three}.
\begin{theorem}
\label{thm:3nadj}
If $S$ is an admissible subset of $\{r,b,g,o,y,s\}$, $|S|=3$, and $S\notin \{\{r,g,y\},\{s,b,r\}\}$ then
\[
F(J(T_n(S)),q)
=\displaystyle\prod_{j=1}^{n-1} (1+q^j)^{n-j}=\displaystyle\prod_{1\le i\le j\le k\le n-1} \frac{[i+j]_q}{[i+j-1]_q}.
\]
Thus if we set $q=1$ we have $|J(T_n(S))|= 2^{{n\choose 2}}$.
\end{theorem}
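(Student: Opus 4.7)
The base case $S=\{r,g,b\}$ is essentially contained in the earlier material of the excerpt: $T_n$ restricted to the red, green, and blue edges is the disjoint union $P_2\sqcup P_3\sqcup\cdots\sqcup P_n$. Since order ideals of a disjoint union are Cartesian products of order ideals on the components and the rank statistic is additive, Theorem~\ref{lemma:pn} yields
\[
F(J(T_n(\{r,g,b\})),q)=\prod_{j=2}^{n}F(J(P_j),q)=\prod_{j=2}^{n}\prod_{i=1}^{j-1}(1+q^i)=\prod_{j=1}^{n-1}(1+q^j)^{n-j}.
\]

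To extend this to the other admissible 3-subsets, I would exploit the order-reversing geometric symmetry of $T_n$ that swaps the minimum corner with the maximum corner and exchanges the two middle corners of the tetrahedron with each other. On colors this anti-automorphism acts as the involution $(r\;s)(y\;b)$, fixing $g$ and $o$, inducing an isomorphism $T_n(S)\simeq T_n^*((r\;s)(y\;b)(S))$. Because $\prod_{j=1}^{n-1}(1+q^j)^{n-j}$ is palindromic of degree $\sum_{j=1}^{n-1}j(n-j)=\binom{n+1}{3}=|T_n|$, this anti-automorphism lets me transfer the result from any $S$ to its involution-partner. Partitioning the nine non-excluded admissible 3-subsets into orbits under this involution reduces the work to five orbit representatives not yet in the orbit of $\{r,g,b\}$, for instance $\{r,g,s\}$, $\{r,o,y\}$, $\{b,g,o\}$, $\{b,g,y\}$, $\{b,o,y\}$.

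For each remaining representative I would argue by induction on $n$, peeling off the bottom $\overrightarrow{y}$-slice of $T_n$ (the copy of $P_n$ at $c_3=0$) and leaving a copy of $T_{n-1}(S)$ behind. The desired recursion
\[
F(J(T_n(S)),q)=F(J(P_n),q)\cdot F(J(T_{n-1}(S)),q)
\]
is immediate when no interlayer edges exist, as in $\{r,g,b\}$, and combined with Theorem~\ref{lemma:pn} this gives the stated product by induction. When $S$ contains one or more of the interlayer colors $\{o,y,s\}$, the factorization is no longer automatic; I would instead sum carefully over the possible intersections of an order ideal with the $P_n$-slice, using the compatibility constraints imposed by the interlayer edges, and argue that the total still factors in the required way. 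A uniform alternative, suggested by the identity $\prod_{j=1}^{n-1}(1+q^j)^{n-j}=\prod_{1\le i<j\le n}(1+q^i)$ and the fact that $2^{\binom{n}{2}}$ counts tournaments on $n$ vertices, is to construct an explicit bijection from $J(T_n(S))$ to tournaments on $\{1,\ldots,n\}$ respecting cardinality and a suitable tournament statistic.

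The main obstacle is verifying the factorization (or the bijection) uniformly across the interlayer cases and pinpointing why the two excluded sets behave differently. The admissibility conditions enforce consistency of the induced colored edges, and I would expect them to be exactly what makes the $P_n$-layer factorization survive in every case except the two excluded corner-sets, which correspond to the edge-sets at the minimum and maximum corners and instead produce the semistandard Young tableaux count treated elsewhere. Making this case distinction precise is the delicate point of the proof.
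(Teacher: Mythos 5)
Your treatment of the disjoint case is correct and matches the paper: for $S=\{r,b,(g)\}$ the poset is $P_2\sqcup\cdots\sqcup P_n$ and Theorem~\ref{lemma:pn} gives the product immediately; the paper's Proposition~\ref{lemma:disjoint} does exactly this. Your symmetry bookkeeping is also essentially sound, although note that $T_n(\{r,o,y\})$ and $T_n(\{o,s,b\})$ are again disjoint unions of the $P_j$ (sliced along a different axis of the tetrahedron), so they need no new argument; using the full tetrahedral symmetry rather than a single involution collapses all four disjoint sets into one isomorphism class, as the paper does.

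The genuine gap is the connected case, i.e.\ the isomorphism class of $T_n(\{g,o,y\})$ (containing $\{r,g,s\}$, $\{o,b,y\}$, $\{b,g,o\}$, $\{b,g,y\}$), which is the substantive half of the theorem. Your proposed recursion $F(J(T_n(S)),q)=F(J(P_n),q)\cdot F(J(T_{n-1}(S)),q)$ is numerically consistent with the target product, but for $S=\{g,o,y\}$ the restriction of $T_n(S)$ to the bottom layer carries only green edges (a disjoint union of chains with $n!$ order ideals, not $2^{n-1}$), so the factorization cannot be read off layer by layer; all the coupling lives in the interlayer edges $o,y$. The step ``sum carefully over the possible intersections of an order ideal with the $P_n$-slice and argue that the total still factors'' is precisely the content that needs proof, and no argument is given; the alternative bijection to tournaments is likewise only sketched. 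The paper's Proposition~\ref{lemma:connected} supplies the missing idea: the arrays $Y_n(\{g,y,o\})$ are semistandard Young tableaux of staircase shape $\delta_n$ with entries at most $n$, whose generating function is the Schur function $s_{\delta_n}(x_1,\ldots,x_n)=\prod_{1\le i<j\le n}(x_i+x_j)$ by the bialternant/Vandermonde formula, and the principal specialization yields $\prod_{j=1}^{n-1}(1+q^j)^{n-j}$. Without this (or an equivalent substitute), your proof of Theorem~\ref{thm:3nadj} is incomplete. As a minor point, the excluded set written $\{s,b,r\}$ in the statement is the dual corner set $\{s,b,g\}$; your reading of the exclusions as the minimum and maximum corner triples is the intended one.
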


\begin{figure}[htbp]
\centering
$\begin{array}{lcccr}
\includegraphics[scale=0.45]{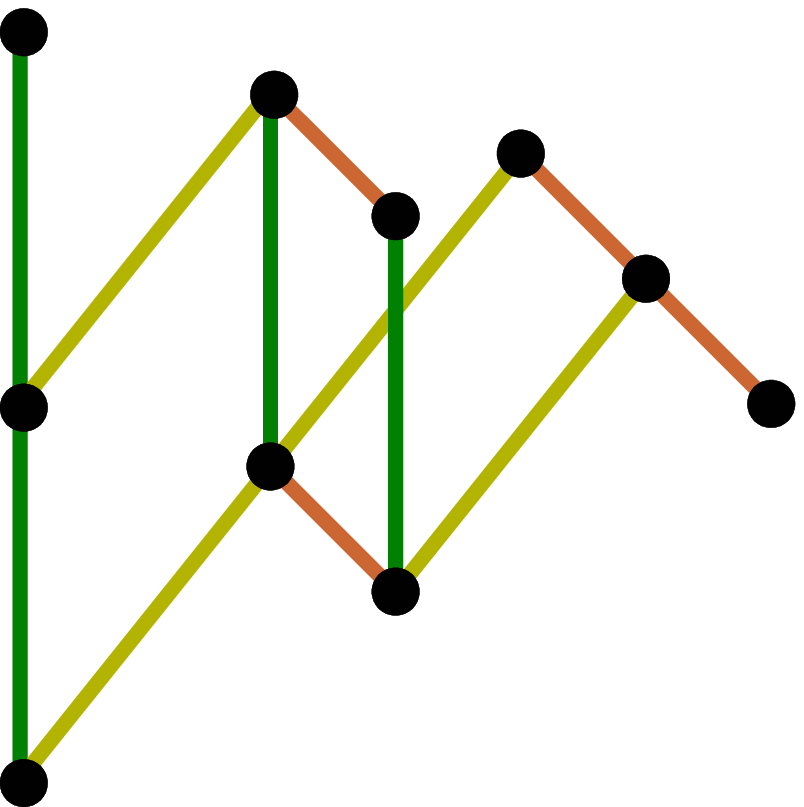}&&
&&
\includegraphics[scale=0.45]{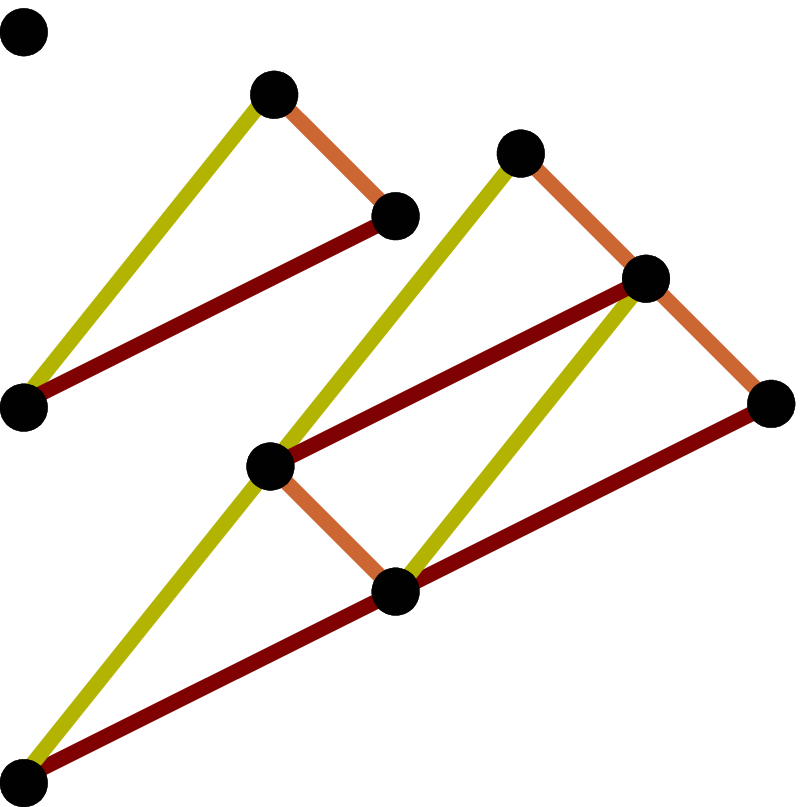}
\end{array}$
\caption{Left: SSYT poset $T_4(\{o,g,y\})$ \hspace{.2cm} Right: Tournament poset $T_4(\{r,(y),o\})$}
\label{fig:threenice}
\end{figure}

There seems to be no nice product formula for the number of order ideals of the dual posets $T_n(\{r,g,y\})$ and $T_n(\{s,b,r\})$ which are the only two ways to pick three adjacent colors while not inducing any other colors. We have calculated the number of order ideals for $n=1$ to $6$ to be: 1, 2, 9, 96, 2498, 161422.

The proof of the following theorem on the four-color posets and the relationship between these order ideals and ASMs and TSSCPPs is the subject of Section~\ref{sec:four}.
\begin{theorem}
\label{thm:4four}
If $S$ is an admissible subset of $\{r,b,g,o,y,s\}$ and $|S|=4$ then 
\[
|J(T_n(S))|=\displaystyle\prod_{j=0}^{n-1} \frac{(3j+1)!}{(n+j)!}=\displaystyle\prod_{1\le i\le j\le k\le n-1} \frac{i+j+k+1}{i+j+k-1}.
\]
\end{theorem}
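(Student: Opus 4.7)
The plan is to first reduce the eight admissible four-color subsets to a small number of essentially distinct cases, then identify each remaining case with either alternating sign matrices or totally symmetric self-complementary plane partitions, and finally appeal to the known product-formula enumerations of these two sets.

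First, I would enumerate the admissible four-subsets (there should be eight of them) and organize them into orbits under the natural action on $T_n$ by the symmetries of the tetrahedron that preserve the partial order, together with the poset duality $T_n \mapsto T_n^*$. Subsets in the same orbit yield posets that are isomorphic or anti-isomorphic, and therefore have the same number of order ideals. I expect this to leave at most two inequivalent cases: one whose order ideals should biject with $n \times n$ alternating sign matrices, and one whose order ideals should biject with TSSCPPs in a $2n \times 2n \times 2n$ box.

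For the ASM case, I would pick a convenient representative (for example one containing all three of $r$, $g$, $y$) and encode each order ideal as an integer array indexed by the poset elements. Slicing $T_n$ into its $n-1$ triangular layers, an order ideal in $T_n(S)$ assigns to each layer a ``height profile'' whose boundary is constrained by the colors in $S$. The interlacing conditions coming from the transverse color should match exactly the monotone-triangle conditions: strictly increasing rows together with the standard interlacing inequalities, which are equivalent by the Mills--Robbins--Rumsey correspondence to the ASM row-and-column sum and alternation conditions of Definition~\ref{def:asm}. Once this bijection is set up, the Zeilberger--Kuperberg theorem gives the product formula (\ref{eq:product}).

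For the TSSCPP case, I would use a triangular-array description of TSSCPPs obtained by restricting a TSSCPP to a fundamental domain of its symmetry group; such an array is determined by a short list of inequalities between nonnegative integers indexed by a triangular shape. Writing an order ideal of $T_n(S)$ as the array whose entry at each vertex counts the elements of the ideal in a prescribed color direction, the four colors in $S$ should translate the order-ideal condition into precisely the defining inequalities of this fundamental domain. Andrews's theorem \cite{ANDREWS_PPV} then completes the count as (\ref{eq:product}).

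The main obstacle will be the TSSCPP side. Unlike the monotone-triangle translation, which is by now a clean template, identifying the correct coordinates on $T_n$ so that one tetrahedral face realises the fundamental domain of a TSSCPP requires careful bookkeeping: a small misalignment between axes produces a set of color inequalities that either fails admissibility or encodes a different object entirely. I would therefore spend most of the effort on choosing the right representative of the TSSCPP orbit, fixing explicit coordinates on $T_n$, and verifying the admissibility rules on each color pair translate into exactly the TSSCPP inequalities, with no spurious or missing constraints.
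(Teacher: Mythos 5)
Your proposal follows essentially the same route as the paper: classify the admissible four-color subsets up to poset isomorphism and duality, identify one class with $n\times n$ ASMs via monotone triangles and the other with TSSCPPs via a fundamental domain of the symmetry group, and then invoke the Zeilberger--Kuperberg and Andrews enumerations. Two details need correcting, one of them substantive. First, there are seven, not eight, admissible four-subsets: the single ASM poset $T_n(\{b,y,o,g\})$ together with three mutually isomorphic TSSCPP posets and their three duals. Second, and more importantly, your suggested ASM representative ``containing all three of $r$, $g$, $y$'' is a misidentification: every admissible four-subset containing $r$ is one of the six TSSCPP posets, and the monotone-triangle conditions (after rotating by $\frac{\pi}{4}$: weakly increasing rows, strictly increasing columns, weakly increasing northeast-to-southwest diagonals) are realized precisely by the colors $y$, $o$, $b$ together with $g$ --- so the ASM poset is the unique admissible four-subset containing neither $r$ nor $s$. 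Had you carried out the monotone-triangle translation on a subset containing $r$, the interlacing inequalities would not have matched and the bijection would have failed. With the representative corrected, your layer-by-layer ``height profile'' encoding is exactly the paper's $X_n(S)$/$Y_n^+(S)$ array machinery from Propositions~\ref{prop:arrays} and~\ref{prop:yplusbij}, and the rest of the argument, including the reflect-and-rotate treatment of the TSSCPP fundamental domain that you correctly flag as the delicate step, goes through as the paper does it.
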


\begin{figure}[htbp]
\centering
$\begin{array}{lcccr}
\includegraphics[scale=0.45]{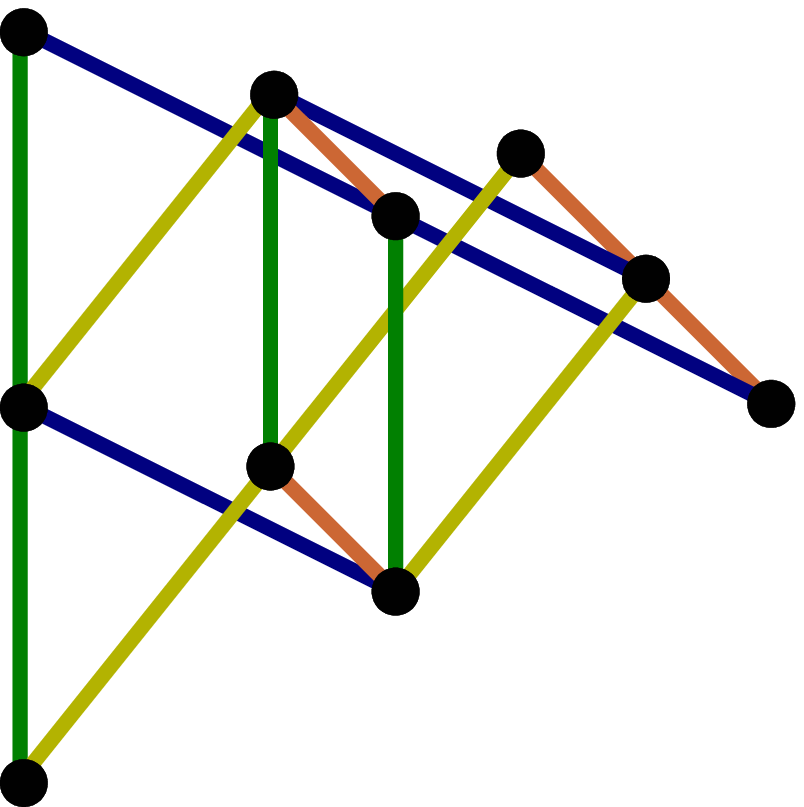}&&
&&
\includegraphics[scale=0.45]{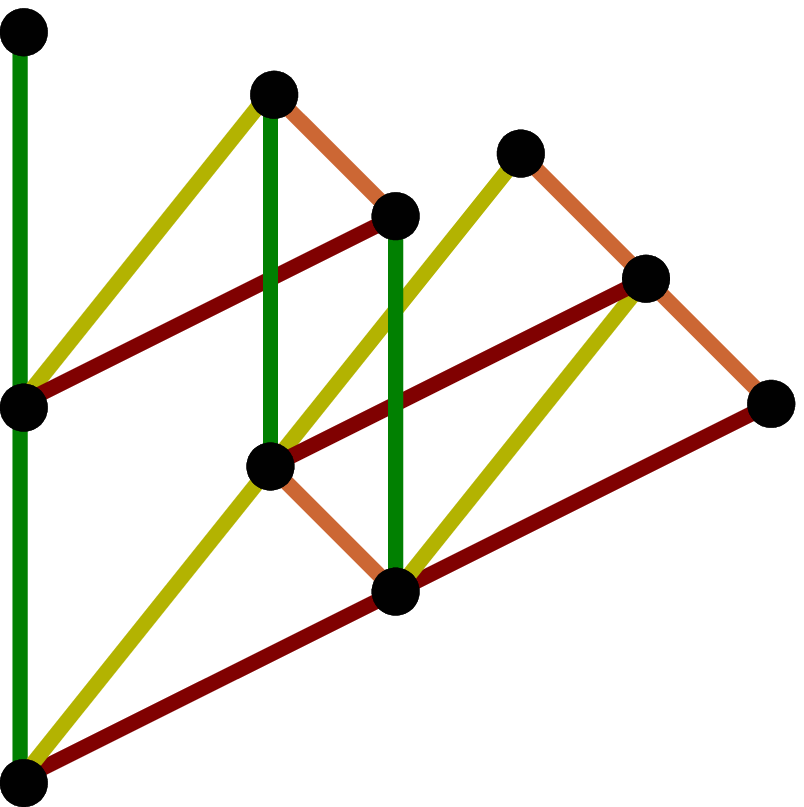}
\end{array}$
\caption[Left: ASM $T_4(\{o,g,y,b\})$ \hspace{.2cm} Right: TSSCPP $T_4(\{r,(y),o,g\})$]{Left: ASM poset $T_4(\{o,g,y,b\})$ \hspace{.2cm} Right: TSSCPP poset $T_4(\{r,(y),o,g\})$}
\label{fig:fournice}
\end{figure}

There are two different cases for five colors: one case consists of the dual posets $T_n(\{(g),(b),o,y,s\})$ and $T_n(\{r,b,(g),o,(y)\})$ and the other case is $T_n(\{r,b,s,(y),g\})$. A nice product formula has not yet been found for either case. 
See further discussion of these posets in Section~\ref{sec:five}.

\begin{figure}[htbp]
\centering
$\begin{array}{lcccr}
\includegraphics[scale=0.45]{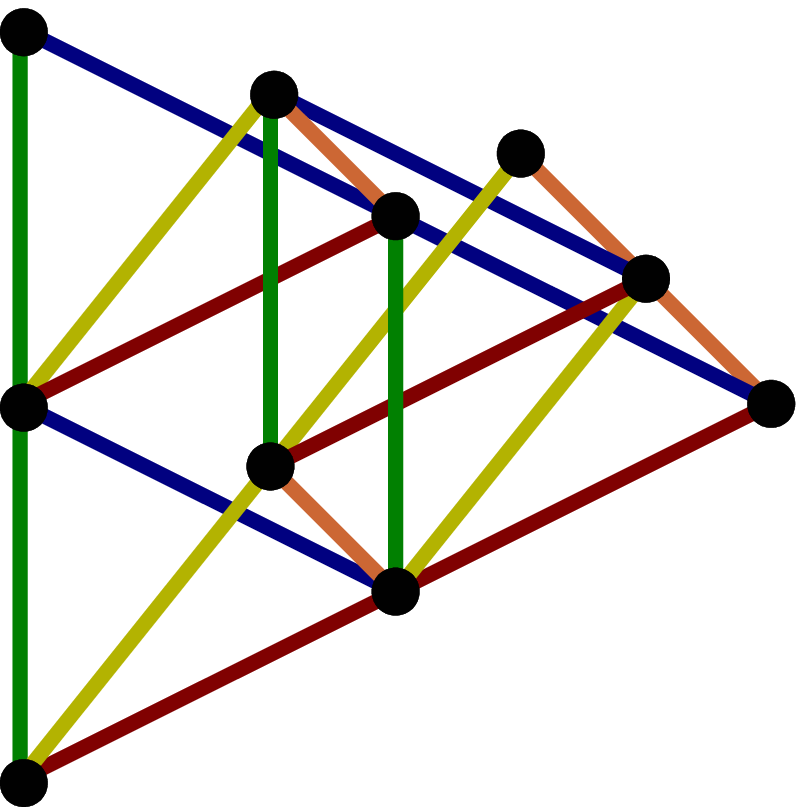}&&
&&
\includegraphics[scale=0.45]{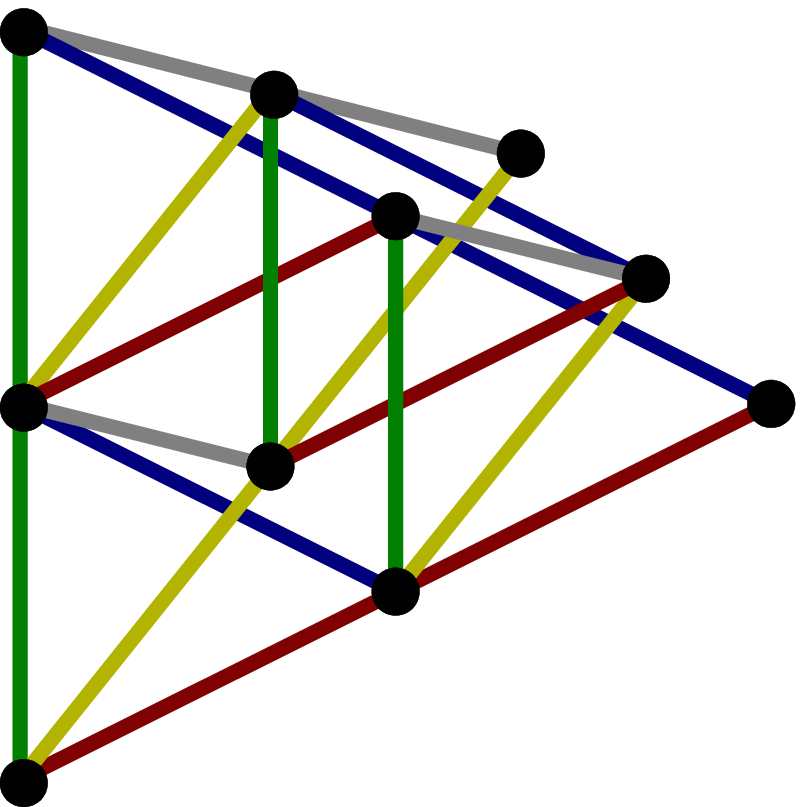}
\end{array}$
\caption{Left: ASM $\cap$ TSSCPP $T_4(\{o,(g),(y),b,r\})$ \hspace{.2cm} Right: TSSCPP $\cap$ TSSCPP $T_4(\{g,(y),b,r,s\})$}
\label{fig:five}
\end{figure}

Finally, the order ideals of the full tetrahedron $T_n$ are counted by the following theorem, which is proved in Section~\ref{sec:six} via a bijection with TSPPs inside an $(n-1)\times (n-1)\times (n-1)$ box.
\begin{theorem}
\label{thm:6six}
\[
|J(T_n)|
=\displaystyle\prod_{1\le i\le j\le n-1} \frac{i+j+n-2}{i+2j-2}=\displaystyle\prod_{1\le i\le j\le k\le n-1} \frac{i+j+k-1}{i+j+k-2}.
\]
\end{theorem}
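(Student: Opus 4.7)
The plan is to exhibit a poset isomorphism $T_n \simeq Q_{n-1}^*$, where I let $Q_{n-1}$ denote the set of triples $(i,j,k)$ with $1 \le i \le j \le k \le n-1$ under componentwise order; this yields $|J(T_n)|=|J(Q_{n-1})|$, and order ideals of $Q_{n-1}$ biject with TSPPs inside an $(n-1)\times(n-1)\times(n-1)$ box, whose enumeration is the Macdonald conjecture proved by Stembridge.

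First I would re-parameterize each vertex $c_1\overrightarrow{r} + c_2\overrightarrow{g} + c_3\overrightarrow{y}$ of $T_n$ by the four-tuple $(\alpha_1,\alpha_2,\alpha_3,\alpha_4) := (n-2-c_1-c_2-c_3,\,c_1,\,c_3,\,c_2)$, a weak composition of $n-2$ into four parts, and then by its sequence of partial sums $(s_1,s_2,s_3) := (\alpha_1,\,\alpha_1+\alpha_2,\,\alpha_1+\alpha_2+\alpha_3)$, which satisfies $0 \le s_1 \le s_2 \le s_3 \le n-2$. The shift $(s_1,s_2,s_3) \mapsto (s_1+1, s_2+1, s_3+1)$ identifies the vertices of $T_n$ bijectively with those of $Q_{n-1}$.

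Next I would check that each of the six edge vectors translates, in the $(\alpha_1,\alpha_2,\alpha_3,\alpha_4)$ coordinates, to a transfer of a single unit from some coordinate $i$ to some coordinate $j$ with $i<j$. Using the three vector relations $\overrightarrow{g}=\overrightarrow{r}+\overrightarrow{b}$, $\overrightarrow{y}=\overrightarrow{r}+\overrightarrow{o}$, $\overrightarrow{g}=\overrightarrow{y}+\overrightarrow{s}$, one verifies that $\overrightarrow{r}, \overrightarrow{y}, \overrightarrow{g}$ transfer a unit from position $1$ to positions $2, 3, 4$ respectively, while $\overrightarrow{b}, \overrightarrow{o}, \overrightarrow{s}$ realize $2\to 4$, $2\to 3$, $3\to 4$; collectively all six pairs $(i,j)$ with $1 \le i < j \le 4$ appear. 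Since each such single-unit transfer weakly decreases every partial sum $s_k$, the partial order on $T_n$ is contained in the dominance order $\alpha \le \beta \iff s_k(\alpha) \ge s_k(\beta)$ for $k=1,2,3$. The converse is a standard greedy induction: given $\alpha \ne \beta$ with $s_k(\alpha) \ge s_k(\beta)$, let $i$ be the smallest index with $s_i(\alpha) > s_i(\beta)$ and $j$ the smallest index $>i$ with $\alpha_j < \beta_j$; a unit transfer from position $i$ to $j$ then preserves all dominance inequalities and strictly reduces the total deficit $\sum_k(s_k(\alpha)-s_k(\beta))$. Translated through the partial-sum identification, dominance of partial sums corresponds to reversed componentwise order on $Q_{n-1}$, so $T_n \simeq Q_{n-1}^*$.

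Since complementation interchanges order ideals and order filters of any poset, $|J(T_n)|=|J(Q_{n-1}^*)|=|J(Q_{n-1})|$. An order ideal $I \subseteq Q_{n-1}$ extends via the $S_3$-action on coordinates to a symmetric subset of $\{1, \ldots, n-1\}^3$ that is down-closed (down-closedness is preserved because componentwise comparison commutes with sorting), so $J(Q_{n-1})$ is in bijection with TSPPs in an $(n-1)\times(n-1)\times(n-1)$ box. Invoking Stembridge's enumeration, the number of TSPPs in an $m\times m\times m$ box equals $\prod_{1 \le i \le j \le k \le m} \frac{i+j+k-1}{i+j+k-2}$; specializing $m=n-1$ yields the second product in the theorem, and the first product form is equivalent by standard algebraic rearrangement. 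The main obstacle is the careful verification of the poset isomorphism---in particular the converse direction of the dominance characterization, which requires showing that at every step of the greedy procedure a valid destination $j$ for the transferred unit exists and that the resulting 4-tuple remains in the simplex $\{\alpha \ge 0,\; \sum\alpha_i = n-2\}$. The deep enumerative content, Stembridge's proof of the Macdonald TSPP conjecture, is cited rather than reproved.
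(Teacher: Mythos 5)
Your proof is correct and follows essentially the same route as the paper: both identify $T_n$ (up to duality) with the poset of triples $1\le i\le j\le k\le n-1$ under componentwise order, i.e.\ the fundamental domain of a TSPP inside an $(n-1)\times(n-1)\times(n-1)$ box, and then invoke Stembridge's enumeration. The only difference is one of detail --- the paper asserts the isomorphism geometrically (red, orange, and silver edges as the three coordinate directions in the wedge $x\ge y\ge z$), whereas you verify it algebraically via compositions and the dominance order, supplying a check the paper leaves implicit.
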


\begin{figure}[htbp]
\centering
\includegraphics[scale=0.45]{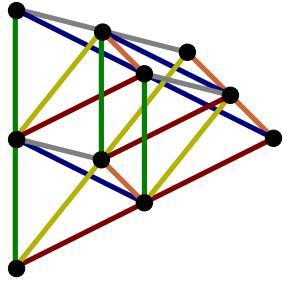}
\caption{TSPP poset $T_4$}
\label{fig:tetra}
\end{figure}

\begin{figure}
\centering
\includegraphics[scale=0.48]{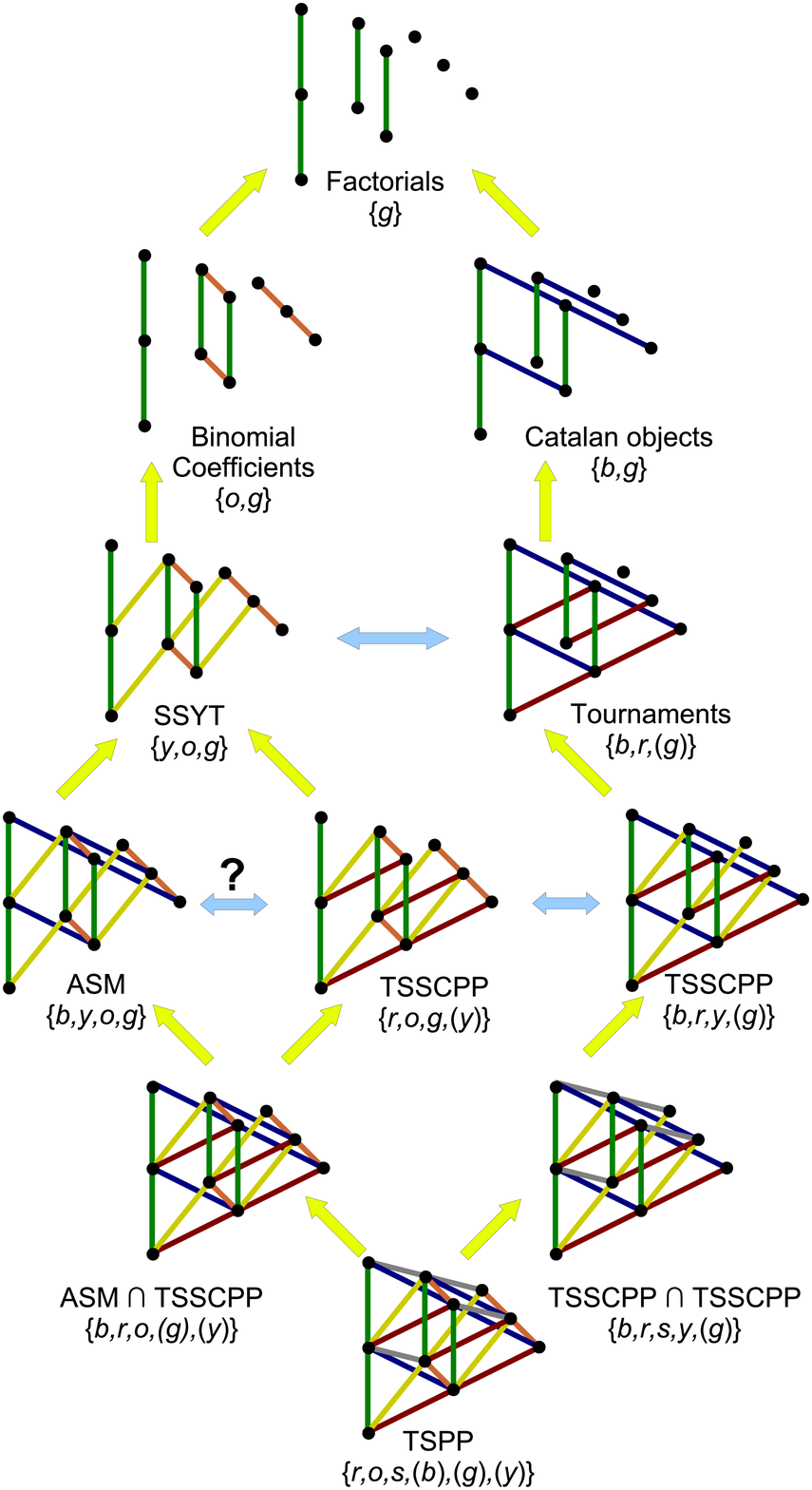}
\caption[The big picture of inclusions and bijections between order ideals]{The big picture of inclusions and bijections between order ideals $J(T_n(S))$. The bijection between three color posets is in Section~\ref{sec:three} and the bijections between TSSCPP posets is by poset isomorphism. The only missing bijection between sets of the same size is between ASM and TSSCPP.}
\label{fig:bigbigpic}
\end{figure}

Next we give bijections between order ideals of $T_n(S)$, $S$ an admissible subset of $\{r,b,g,y,o,s\}$, and arrays of integers with certain inequality conditions. These bijections will be used frequently throughout the rest of the paper to prove the theorems stated above and other facts about $T_n(S)$ and the related combinatorial objects.

\begin{definition}
\label{def:aofs}
Let $S$ be an admissible subset of $\{r,b,g,y,o,s\}$ and suppose $g\in S$. Define $X_n(S)$ to be the set of all integer arrays $x$ of staircase shape $\delta_n=(n-1)~(n-2)\ldots 3~2~1$ with entries $x_{i,j}$ for $1\le i\le n-1$ and $1\le j \le n-i$ which satisfy both
$0\le x_{i,j}\le j$ and the following inequality conditions corresponding to the additional colors in $S$:
\begin{itemize}
\item orange: $x_{i,j}\le x_{i+1,j}$
\item red: $x_{i,j}\le x_{i-1,j+1}$
\item yellow: $x_{i,j}\le x_{i,j+1}$
\item blue: $x_{i,j}\le x_{i+1,j-1}+1$
\item silver: $x_{i,j}\le x_{i,j-1}+1$
\end{itemize}
\end{definition}

\begin{proposition}
\label{prop:arrays}
If $S$ is an admissible subset of $\{r,b,g,y,o,s\}$ and $g\in S$ then $X_n(S)$ is in weight-preserving bijection with the set of order ideals $J(T_n(S))$ where the weight of $x\in X_n(S)$ equals the sum of the entries of $x$ and the weight of $I\in J(T_n(S))$ equals $|I|$. 
\end{proposition}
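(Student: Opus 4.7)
The plan is to construct an explicit weight-preserving bijection using the decomposition of $T_n$ into its green chains. Since $g\in S$, the $\overrightarrow{g}$-edges are covering relations of $T_n(S)$, and they partition the vertex set of $T_n$ into chains: fixing $(c_1,c_3)$ with $c_1+c_3\le n-2$ yields a green chain on $n-1-c_1-c_3$ elements. I would reindex these chains by the staircase $\delta_n$ via $(i,j)=(c_1+1,\,n-1-c_1-c_3)$, so that $(i,j)$ ranges over all pairs with $1\le i\le n-1$ and $1\le j\le n-i$, and the chain $C_{i,j}$ has exactly $j$ elements.

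Given $I\in J(T_n(S))$, set $x_{i,j}:=|I\cap C_{i,j}|$. Down-closure of $I$ under the green covers forces $I\cap C_{i,j}$ to be an initial segment of $C_{i,j}$, so $0\le x_{i,j}\le j$ holds automatically. The inverse map sends $x\in X_n(S)$ to the union, over all $(i,j)$, of the first $x_{i,j}$ elements of $C_{i,j}$. Since the chains partition $T_n$, summing yields $\sum_{i,j} x_{i,j}=|I|$, which is the required weight-preservation.

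The heart of the argument is to verify that for each color $c\in S\setminus\{g\}$, closure of $I$ under color-$c$ covers corresponds precisely to the inequality labeled by $c$ in Definition~\ref{def:aofs}. A color-$c$ cover joins an element of a source chain $C_{i_s,j_s}$ (containing the smaller vertex) to an element of a target chain $C_{i_t,j_t}$ (containing the larger vertex). When source and target share their $c_2$-coordinate (the case for $r$, $y$, and $o$), the down-closure condition reduces to $x_{i_t,j_t}\le x_{i_s,j_s}$; when the target has $c_2$ one greater than the source (the case for $b$ and $s$), it reduces to $x_{i_t,j_t}\le x_{i_s,j_s}+1$. I would compute $(i_t,j_t)$ in terms of $(i_s,j_s)$ under the reindexing for each color in turn and read off the five inequalities. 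The main obstacle is just coordinate bookkeeping: a naive reindexing produces inequalities going in the wrong direction or with the wrong offset, and the encoding $(i,j)=(c_1+1,\,n-1-c_1-c_3)$ is chosen precisely so that each color's geometric direction matches its entry in Definition~\ref{def:aofs}. Admissibility of $S$ ensures that no implicit color imposes a new constraint, since any induced color's covers are already consequences of the explicit ones.
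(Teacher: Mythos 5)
Your proposal is correct and follows essentially the same route as the paper: decompose $T_n$ into its green chains indexed by the staircase, let $x_{i,j}$ record the size of the induced order ideal on each chain (an initial segment since $g\in S$), get weight preservation from the fact that the chains partition the vertex set, and translate down-closure along each remaining color into the corresponding inequality of Definition~\ref{def:aofs}. Your explicit $(i,j)=(c_1+1,\,n-1-c_1-c_3)$ bookkeeping and the observation that the offset is $0$ or $1$ according to whether the cover shifts the green coordinate are just a more explicit rendering of the paper's layer-by-layer argument, and they check out.
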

\begin{proof}
Let $S$ be an admissible subset of $\{r,b,g,y,o,s\}$ and suppose $g\in S$. Recall that $T_n$ is made up of the layers $P_k$ where $2\le k\le n$ and that $P_k$ contains $k-1$ green-edged chains of length $k-1,\ldots,2,1$. For each $P_k\subseteq T_n$ let the $k-1$ green chains inside $P_k$ determine the entries $x_{i,j}$ of an integer array on the diagonal where $i+j=k$. In particular, given an order ideal $I$ of $T_n(S)$ form an array $x$ by setting $x_{i,j}$ equal to the number of elements in the induced order ideal of the green chain of length $j$ inside $P_{i+j}$. This defines $x$ as an integer array of staircase shape $\delta_n$ whose entries satisfy $0\le x_{i,j}\le j$. Also since each entry $x_{i,j}$ is given by an induced order ideal and since each element of $T_n$ is in exactly one green chain we know that $|I|=\sum_{i,j} x_{i,j}$. Thus the weight is preserved.

Now it is left to determine what the other colors mean in terms of the array entries. Since the colors red and blue connect green chains from the same $P_k$ we see that inequalities corresponding to red and blue should relate entries of $x$ on the same northeast to southwest diagonal of $x$. So if $r\in S$ then $x_{i,j}\le x_{i-1,j+1}$ and if $b\in S$ then $x_{i,j}\le x_{i+1,j-1}+1$. The colors yellow, orange, and silver connect $P_k$ to $P_{k+1}$ for $2\le k\le n-1$. So from our construction we see that 
if $o\in S$ then $x_{i,j}\le x_{i+1,j}$, if $y\in S$ then $x_{i,j}\le x_{i,j+1}$, and if $s\in S$ then $x_{i,j}\le x_{i,j-1}+1$.
\end{proof}

A useful transformation of the elements in $X_n(S)$ is given in the following definition.
\begin{definition}
\label{def:bofs}
Let $S$ be an admissible subset of $\{r,b,g,y,o,s\}$ and suppose $g\in S$. Define $Y_n(S)$ to be the set of all integer arrays $y$ of staircase shape $\delta_n=(n-1)~(n-2)\ldots 3~2~1$ whose entries $y_{i,j}$ satisfy both
$i\le y_{i,j}\le j+i$ and the following inequality conditions corresponding to the additional colors in $S$:
\begin{itemize}
\item orange: $y_{i,j} < y_{i+1,j}$
\item red: $y_{i,j}\le y_{i-1,j+1} +1$
\item yellow: $y_{i,j}\le y_{i,j+1}$
\item blue: $y_{i,j}\le y_{i+1,j-1}$
\item silver: $y_{i,j}\le y_{i,j-1}+1$
\end{itemize}
\end{definition}

\begin{proposition}
\label{prop:yofs}
If $S$ is an admissible subset of $\{r,b,g,y,o,s\}$ and $g\in S$ then
$Y_n(S)$ is in weight-preserving bijection with $J(T_n(S))$ where the weight of $y\in Y_n(S)$ is given by $\sum_{i=1}^{n-1} \sum_{j=1}^{n-i} (y_{i,j} - i)$ and the weight of $I\in J(T_n(S))$ equals $|I|$. 
\end{proposition}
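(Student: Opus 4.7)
The plan is to reduce the statement to Proposition~\ref{prop:arrays} by exhibiting an explicit weight-preserving bijection $\varphi: X_n(S) \to Y_n(S)$ and then composing with the bijection $X_n(S) \to J(T_n(S))$ already established. The natural candidate, looking at the range bounds $0\le x_{i,j}\le j$ versus $i\le y_{i,j}\le j+i$, is simply the row shift
\[
\varphi(x)_{i,j} = x_{i,j} + i.
\]
This is clearly a bijection on integer arrays of shape $\delta_n$ and matches the basic bounds.

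The main (purely routine) step is to verify that each of the five color-inequalities listed in Definition~\ref{def:bofs} is the image under $\varphi$ of the corresponding inequality in Definition~\ref{def:aofs}. For instance, the orange condition $x_{i,j}\le x_{i+1,j}$ becomes $y_{i,j}-i \le y_{i+1,j}-(i+1)$, i.e.\ $y_{i,j}<y_{i+1,j}$; the red condition $x_{i,j}\le x_{i-1,j+1}$ becomes $y_{i,j}\le y_{i-1,j+1}+1$; the blue condition $x_{i,j}\le x_{i+1,j-1}+1$ becomes $y_{i,j}\le y_{i+1,j-1}$; while yellow and silver, which relate entries in the same row, are unchanged since $i$ is constant. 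Each of these is a one-line computation and they exhaust the admissible colors.

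For the weight, the definition $\mathrm{wt}(y)=\sum_{i,j}(y_{i,j}-i)$ has been rigged precisely so that $\mathrm{wt}(\varphi(x)) = \sum_{i,j} x_{i,j} = \mathrm{wt}(x)$, so $\varphi$ is weight-preserving. Composing $\varphi^{-1}$ with the bijection of Proposition~\ref{prop:arrays} (which sends $I\in J(T_n(S))$ of weight $|I|$ to $x\in X_n(S)$ of the same weight) yields the desired weight-preserving bijection $J(T_n(S))\leftrightarrow Y_n(S)$.

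There is no real obstacle here; the substance of the statement is the clever choice of the shift and of the weight function in Definition~\ref{def:bofs}, and once these are written down the proof is essentially a table of five trivial verifications plus an appeal to the previous proposition. I would therefore present the proof compactly by defining $\varphi$, listing the color-by-color check in a single display, and then citing Proposition~\ref{prop:arrays}.
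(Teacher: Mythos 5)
Your proposal is correct and follows exactly the paper's own argument: apply the row shift $y_{i,j}=x_{i,j}+i$ to the arrays of Proposition~\ref{prop:arrays}, check color by color how the inter-row inequalities transform, and note the weight is preserved by construction. Nothing to add.
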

\begin{proof}
Let $S$ be an admissible subset of $\{r,b,g,y,o,s\}$ and suppose $g\in S$. For $x\in X_n(S)$ consider the array $y$ with entries $y_{i,j}= x_{i,j}+i$. 
The transformation $y_{i,j}= x_{i,j}+i$ affects the meaning of the colors in terms of inequalities. Any inequality which compares elements in two different rows is altered. Thus orange now corresponds to $y_{i,j} < y_{i+1,j}$, red now corresponds to $y_{i,j}\le y_{i-1,j+1} +1$, blue now corresponds to $y_{i,j}\le y_{i+1,j-1}$, and the other inequalities are unaffected. Also since $y_{i,j}= x_{i,j}+i$ it follows that the weight of $y$ is given by $\sum_{i,j} (y_{i,j} - i)$. Thus $Y_n(S)$ is in weight-preserving bijection with $X_n(S)$ so that $Y_n(S)$ is in weight-preserving bijection with $J(T_n(S))$.
\end{proof}

We make one final transformation of these arrays which will be needed later. It will sometimes be helpful to think of the arrays $Y_n(S)$ as having a 0th column with entries $1~2~3\ldots n$. 
\begin{definition}
\label{def:yplus}
Let $S$ be an admissible subset of $\{r,b,g,y,o,s\}$ and suppose $g\in S$. Define $Y_n^+(S)$ to be the set of all arrays $y\in Y_n(S)$ with an additional column (call it column 0) with entries $y_{i,0}=i$ for $1\le i\le n$. 
\end{definition}
Thus $y\in Y_n^+(S)$ is an array of shape $n~(n-1)\ldots 3~2~1$ with fixed 0th column $1~2~3\ldots n$. 

\begin{proposition}
\label{prop:yplusbij}
If $S$ is an admissible subset of $\{r,b,g,y,o,s\}$ and $g\in S$ then
$Y_n^+(S)$ is in weight-preserving bijection with $J(T_n(S))$ where the weight of $y\in Y_n^+(S)$ is given by $\displaystyle\sum_{i=1}^{n-1} \displaystyle\sum_{j=0}^{n-i} (y_{i,j} - i)$ and the weight of $I\in J(T_n(S))$ equals $|I|$. 
\end{proposition}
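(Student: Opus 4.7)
The plan is to show that the map $\varphi\colon Y_n(S)\to Y_n^+(S)$ that simply appends a fixed 0th column with entries $y_{i,0}=i$ ($1\le i\le n$) is a weight-preserving bijection, and then compose with the bijection of Proposition~\ref{prop:yofs}.

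First I would check that for any $y\in Y_n(S)$, the extended array $\varphi(y)$ automatically satisfies all admissible color inequalities that reach into column $0$; the content of the proof is really this color-by-color verification, using only the box constraints $i\le y_{i,j}\le j+i$ from Definition~\ref{def:bofs}. Concretely: if $o\in S$, the orange inequality at $j=0$ becomes $i<i+1$; if $y\in S$, the yellow inequality $y_{i,0}\le y_{i,1}$ becomes $i\le y_{i,1}$, which is the lower box bound; if $r\in S$, the red inequality at $j=0$ is $y_{i,0}\le y_{i-1,1}+1$, i.e.\ $i-1\le y_{i-1,1}$, again the lower box bound; if $s\in S$, the silver inequality at $j=1$ is $y_{i,1}\le y_{i,0}+1=i+1$, which is the upper box bound; and if $b\in S$, the blue inequality at $j=0$ would refer to column $-1$ and so imposes nothing. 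Hence $\varphi(y)\in Y_n^+(S)$.

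For the inverse, by definition of $Y_n^+(S)$ every element has 0th column equal to $(1,2,\ldots,n)$, so deleting column $0$ is a well-defined inverse landing in $Y_n(S)$; thus $\varphi$ is a bijection. For the weight, the 0th column contributes
\[
\sum_{i=1}^{n-1}(y_{i,0}-i)=\sum_{i=1}^{n-1}0=0,
\]
so the $Y_n^+(S)$-weight equals the $Y_n(S)$-weight. Composing $\varphi$ with the weight-preserving bijection of Proposition~\ref{prop:yofs} gives the desired weight-preserving bijection between $Y_n^+(S)$ and $J(T_n(S))$.

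There is no real obstacle here beyond bookkeeping; the only step that requires attention is the case analysis in the first paragraph, ensuring that every color in $S$ whose inequality can touch column $0$ produces either a tautology or a consequence of the box constraints already imposed on $Y_n(S)$.
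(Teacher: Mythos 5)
Your proof is correct and follows the same route as the paper: append/delete the fixed 0th column, observe it contributes $0$ to the weight, and compose with Proposition~\ref{prop:yofs}. The color-by-color check that the appended column is compatible with the inequalities is a detail the paper dismisses as trivial, and your verification of it is accurate.
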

\begin{proof}
$Y_n^+(S)$ is trivially in bijection with $Y_n(S)$ by the removal of the 0th column. The weight is preserved since $y_{i,0}=i$ so that $\sum_{i=1}^{n-1} (y_{i,0}-i)=0$. Therefore the added column entries contribute nothing to the weight.
\end{proof}

\section{The two color posets---Catalan numbers and binomial coefficients}
\label{sec:two}

In this section we prove Theorems~\ref{thm:twoopp} and \ref{thm:twoadj} about the weighted enumeration of $J(T_n(S))$ when $|S|=2$. One theorem gives a product of Catalan numbers and the other a product of binomial coefficients.  
We begin by proving the enumeration of the two-color posets $J(T_n(S))$ when $S\in \{\{g,o\},\{r,s\},\{b,y\}\}$.
\begin{thm216}
For $S\in \{\{g,o\},\{r,s\},\{b,y\}\}$
\begin{equation}
F(J(T_n(S)),q)=\displaystyle\prod_{j=1}^n {n\brack j}_q=\displaystyle\prod_{1\le i\le j\le k\le n-1} \frac{[j+1]_q}{[j]_q}.
\end{equation}
\end{thm216}
\begin{proof}
Consider the staircase shape arrays $Y_n(\{g,o\})$ which strictly decrease down columns and have no conditions on the rows. Thus in a column of length $j$ there must be $j$ distinct integers between $1$ and $n$; this is counted by ${n\choose j}$. If we give a weight to each of these integers of $q$ to the power of that integer minus its row, we have a set $q$-enumerated by the $q$-binomial coefficient ${n\brack j}_q$. Thus $\prod_{j=1}^n {n\brack j}_q$ is the generating function of the arrays $Y_n(\{g,o\})$.
Recall from Proposition~\ref{prop:yofs} that $Y_n(\{g,o\})$ is in weight-preserving bijection with $J(T_n(\{g,o\}))$ where the weight of $y\in Y_n(\{g,o\})$ is given by $\sum_{i,j} (y_{i,j} - i)$ and the weight of $I\in J(T_n(\{g,o\}))$ equals $|I|$. Thus the rank generating function of the lattice of order ideals $F(J(T_n(\{g,o\})),q)$ equals $\prod_{j=1}^n {n\brack j}_q$.
The posets $T_n(\{r,s\})$ and $T_n(\{b,y\})$ are both isomorphic to $T_n(\{g,o\})$. Therefore the result follows by poset isomorphism (see~\cite{STRIKER_THESIS} for the explicit poset isomorphisms).
\end{proof}

Now suppose $S$ is admissible and $|S|=2$ but $S\notin \{\{g,o\},\{r,s\},\{b,y\}\}$. Then Theorem~\ref{thm:twoadj} applies. 
\begin{thm217}
If $S_1\in \{\{b,g\}$, $\{b,s\}$, $\{y,o\}$, $\{g,s\}\}$ and $S_2\in \{\{r,y\}$, $\{r,g\}$, $\{y,g\}$, $\{b,o\}\}$ 
then
\begin{equation}
\label{eq:2adj}
|J(T_n(S_1))|=|J(T_n(S_2)|=\displaystyle\prod_{j=1}^n C_j =\displaystyle\prod_{j=1}^n \frac{1}{j+1} {2j \choose j}=\displaystyle\prod_{1\le i\le j\le k\le n-1} \frac{i+j+2}{i+j},
\end{equation} 
where $C_j$ is the $j$th Catalan number. Also,
\begin{equation}
F(J(T_n(S_1)),q)=F(J(T_n^*(S_2)),q)=\displaystyle\prod_{j=1}^n C_j(q),
\end{equation}
where the $C_j(q)$ are the Carlitz-Riordan $q$-Catalan numbers.
\end{thm217}
\begin{proof}
$T_n(\{b,g\})$ is isomorphic to the disjoint sum of $P_j(\{b,g\})$ for $2\le j\le n$. Recall from Theorem~\ref{lemma:cat} that $|J(P_j(\{b,g\}))|=C_j=\frac{1}{j+1}{2j \choose j}$ and the rank generating function $F(J(P_j(\{b,g\})))$ equals $C_j(q)$ where $C_j(q)$ is the Carlitz-Riordan $q$-Catalan number defined by the recurrence
$C_j(q)=\sum_{k=1}^j q^{k-1} C_{k-1}(q) C_{j-k}(q)$
with initial conditions $C_0(q)=C_1(q)=1$.
Thus the number of order ideals $|J(T_n(\{b,g\}))|$ equals the product $\prod_{j=2}^n C_j$ and the rank generating function $F(J(T_n(\{b,g\})),q)$ equals the product $\prod_{j=2}^n C_j(q)$. Then since $C_1=C_1(q)=1$ we can write the product as $j$ goes from 1 to $n$. Finally, the posets $T_n(S_1)$ for any choice of $S_1\in \{\{b,g\}$, $\{b,s\}$, $\{y,o\}$, $\{g,s\}\}$ and the posets $T_n^*(S_2)$ for any $S_2\in \{\{r,y\}$, $\{r,g\}$, $\{y,g\}$, $\{b,o\}\}$ are all isomorphic, thus the result follows by poset isomorphism.
\end{proof}

Note that the rank generating function of this theorem is not the direct $q$-ification of the counting formula (\ref{eq:2adj}). This is because the rank generating function does not factor nicely so it cannot be written as a nice product formula, but instead as a recurrence. The direct $q$-ification of (\ref{eq:2adj}) is still the generating function of $J(T_n(\{b,g\}))$, but with a different weight. $\prod_{j=1}^n \frac{1}{[j+1]_q}{2j\brack j}_q$ is the product of the first $n$ MacMahon $q$-Catalan numbers. The MacMahon $q$-Catalan numbers $\frac{1}{[j+1]_q}{2j\brack j}_q$ count Dyck paths with $2j$ steps (or Catalan words of length $2j$) weighted by major index. The major index of a Dyck path is the sum of all $i$ such that step $i$ of the Dyck path is an increasing step and step $i+1$ is a decreasing step. 
Therefore $\prod_{j=1}^n \frac{1}{[j+1]_q}{2j\brack j}_q$ is the generating function for order ideals $I\in J(T_n(\{b,g\}))$ with weight not corresponding to $|I|$ but rather to the product of the major index of each Dyck path in bijection with the order ideal of $P_j$ induced by the elements of $I$ contained in $P_j$. 
See~\cite{QTCAT} for a nice exposition of both the MacMahon and Carlitz-Riordan $q$-Catalan numbers.

\section{Three color posets---tournaments and semistandard Young tableaux}
\label{sec:three}

In this section we prove Theorem~\ref{thm:3nadj} which states that the order ideals of the three color posets $J(T_n(S))$ where $S$ is admissible, $|S|=3$, and $S\notin \{\{r,g,y\},\{s,b,r\}\}$ are counted by the very simple expression $2^{n\choose 2}$. This is also the number of graphs on $n$ labeled vertices and equivalently the number of tournaments on $n$ vertices. A tournament is a complete directed graph with labeled vertices. 
We will first prove the weighted counting formula of Theorem~\ref{thm:3nadj} and then discuss bijections between the order ideals of the three color posets and tournaments. In the next section we will discuss the possibility of using the ideas from these bijections to find a bijection between ASMs and TSSCPPs.

We will separate Theorem~\ref{thm:3nadj} into two propositions since there are two nonisomorphic classes of posets $T_n(S)$ where $S$ is admissible, $|S|=3$, and $S\notin \{\{r,g,y\},\{s,b,r\}\}$. The first proposition is the case where $T_n(S)$ is a disjoint sum of posets and the second proposition is the case where $T_n(S)$ is a connected poset.
\begin{proposition}
\label{lemma:disjoint}
Suppose $S\in \{\{o,s,(b)\}, \{s,y,(g)\}, \{o,r,(y)\}, \{b,r,(g)\}\}$. Then
\[
F(J(T_n(S)),q)
=\displaystyle\prod_{j=1}^{n-1} (1+q^j)^{n-j}=\displaystyle\prod_{1\le i\le j\le k\le n-1} \frac{[i+j]_q}{[i+j-1]_q}.
\]
\end{proposition}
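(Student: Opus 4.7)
The plan is to reduce all four admissible triples to a single case via the symmetries of the tetrahedron, then to exhibit that single case as a disjoint union of pyramidal posets $P_k$ whose rank generating functions are already known from Theorem~\ref{lemma:pn}.

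First I would focus on $S=\{b,r,(g)\}$. By hypothesis this consists of exactly the three edge colors used to build the layers $P_k$ of $T_n$, and the construction of $T_n$ itself already identifies this subposet with the disjoint union $\bigsqcup_{k=2}^{n}P_k$. Since $J(P\sqcup Q)\cong J(P)\times J(Q)$ as ranked lattices, the rank generating function factors:
\[
F(J(T_n(\{b,r,(g)\})),q)\;=\;\prod_{k=2}^{n}F(J(P_k),q)\;=\;\prod_{k=2}^{n}\prod_{j=1}^{k-1}(1+q^j),
\]
where the second equality is Theorem~\ref{lemma:pn}. Collecting exponents, the factor $(1+q^j)$ appears once for each $k\in\{j+1,\dots,n\}$, so the product equals $\prod_{j=1}^{n-1}(1+q^j)^{n-j}$.

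Next I would handle the remaining three triples $\{o,s,(b)\}$, $\{s,y,(g)\}$, and $\{o,r,(y)\}$. Each has the same structural pattern as $\{b,r,(g)\}$: two ``generating'' colors together with the third color they induce. The rotational symmetries of the tetrahedron $T_n$ cyclically permute its four corners and correspondingly permute the four admissible triples of this form, and so produce poset isomorphisms among the four subposets $T_n(S)$. Consequently all four share the rank generating function computed above. The explicit isomorphisms are of the type already catalogued in~\cite{STRIKER_THESIS}, and this bookkeeping---matching each color-triple to the right corner of the tetrahedron---is the only nontrivial step in the argument.

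Finally the MacMahon-style rewrite
\[
\prod_{1\le i\le j\le k\le n-1}\frac{[i+j]_q}{[i+j-1]_q}\;=\;\prod_{j=1}^{n-1}(1+q^j)^{n-j}
\]
follows by a short telescoping: for each fixed $j$, the index $k$ ranges over $n-j$ values, and the inner product $\prod_{i=1}^{j}\frac{[i+j]_q}{[i+j-1]_q}$ telescopes to $[2j]_q/[j]_q=1+q^j$. Multiplying over $j$ yields the right-hand side, completing the identification of the two forms of the rank generating function.
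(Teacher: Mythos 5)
Your proposal is correct and follows essentially the same route as the paper: decompose $T_n(\{b,r,(g)\})$ as the disjoint sum of the layers $P_k$ for $2\le k\le n$, multiply the rank generating functions from Theorem~\ref{lemma:pn}, regroup the product to get $\prod_{j=1}^{n-1}(1+q^j)^{n-j}$, and transfer the result to the other three triples by poset isomorphism. Your explicit telescoping check of the MacMahon-style product is a small addition the paper leaves implicit, but the substance of the argument is identical.
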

\begin{proof}
$T_n(\{b,r,(g)\})$ is a disjoint sum of the $P_j$ posets for $2\le j\le n$ whose order ideals are counted by $2^{j-1}$. Furthermore, by Theorem~\ref{lemma:pn} the rank generating function of $J(P_j)$ is given by $\prod_{i=1}^{j-1} (1+q^i)$ thus $F(T_n(\{b,r,(g)\}),q)$ is the product of $\prod_{i=1}^{j-1} (1+q^i)$ for $2\le j\le n$. Rewriting the product we obtain $\prod_{j=2}^n\prod_{i=1}^{j-1} (1+q^i)=\prod_{j=1}^{n-1} (1+q^j)^{n-j}$.
Now the posets $T_n(S)$ where $S\in \{\{o,s,(b)\}, \{s,y,(g)\}, \{o,r,(y)\}\}$ are isomorphic to $T_n(\{b,r,(g)\})$, so the result follows by poset isomorphism.
\end{proof}

Next we state the second proposition which proves Theorem~\ref{thm:3nadj} for the second case in which $T_n(S)$ is a connected poset. 
\begin{proposition}
\label{lemma:connected}
Suppose $S\in \{\{r,g,s\},\{o,b,y\},\{y,g,o\},\{b,g,o\},\{y,g,b\}\}$. Then
\[
F(J(T_n(S)),q)
=\displaystyle\prod_{j=1}^{n-1} (1+q^j)^{n-j}=\displaystyle\prod_{1\le i\le j\le k\le n-1} \frac{[i+j]_q}{[i+j-1]_q}.
\]
\end{proposition}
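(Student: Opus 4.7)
The plan is to reduce the five cases to a single representative via explicit poset isomorphisms, and then to evaluate the rank generating function of that representative by identifying its order ideals with semistandard Young tableaux (SSYT) of staircase shape and invoking the principal specialization of the corresponding Schur function.

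For the isomorphisms, I exhibit affine maps on the ambient set $T_n \subseteq \mathbb{Z}_{\ge 0}^3$ whose linear parts send the three edge-vectors of one admissible subset to the three edge-vectors of another, all in the positive direction. The cyclic coordinate permutation $\sigma(c_1, c_2, c_3) = (c_2, c_3, c_1)$ sends $\overrightarrow{y}, \overrightarrow{g}, \overrightarrow{o}$ to $\overrightarrow{g}, \overrightarrow{r}, \overrightarrow{s}$ and so restricts to a poset isomorphism $T_n(\{y,g,o\}) \cong T_n(\{r,g,s\})$. The transposition $\tau(c_1, c_2, c_3) = (c_1, c_3, c_2)$ sends $\overrightarrow{y}, \overrightarrow{g}, \overrightarrow{o}$ to $\overrightarrow{g}, \overrightarrow{y}, \overrightarrow{b}$, yielding $T_n(\{y,g,o\}) \cong T_n(\{y,g,b\})$. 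The affine involution $\pi(c_1, c_2, c_3) = (n - 2 - c_1 - c_2 - c_3,\, c_2,\, c_3)$ swaps $\overrightarrow{y} \leftrightarrow \overrightarrow{o}$ and $\overrightarrow{g} \leftrightarrow \overrightarrow{b}$ while reversing only the absent $\overrightarrow{r}$ direction, so $\pi$ yields both $T_n(\{y,g,o\}) \cong T_n(\{o,b,y\})$ and $T_n(\{y,g,b\}) \cong T_n(\{b,g,o\})$. Composing gives $T_n(\{y,g,o\}) \cong T_n(\{b,g,o\})$, so all five posets are pairwise isomorphic and it suffices to treat $T_n(\{y,g,o\})$.

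Proposition~\ref{prop:yofs} then provides a weight-preserving bijection between $J(T_n(\{y,g,o\}))$ and the set $Y_n(\{y,g,o\})$ of integer arrays $(y_{i,j})$ of staircase shape $\delta_n = (n-1, n-2, \ldots, 1)$ satisfying $i \le y_{i,j} \le i+j$, $y_{i,j} < y_{i+1,j}$, and $y_{i,j} \le y_{i,j+1}$. These are precisely the SSYT of shape $\delta_n$ with entries in $\{1, 2, \ldots, n\}$: the upper bound $y_{i,j} \le i+j$ is forced automatically by the column-strict condition together with the maximum entry $n$, and the remaining inequalities are the standard SSYT conditions. The weight $\sum(y_{i,j} - i)$ differs from the standard monomial statistic $\sum(y_{i,j} - 1)$ by the constant $n(\delta_n) = \sum_i (i-1)(n-i)$, so the rank generating function equals $q^{-n(\delta_n)} s_{\delta_n}(1, q, \ldots, q^{n-1})$.

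By Stanley's hook-content formula this simplifies to $\prod_{(i,j) \in \delta_n} (1 - q^{n + j - i})/(1 - q^{h(i,j)})$, where $h(i,j) = 2(n - i - j) + 1$ is the hook length of a cell of $\delta_n$. The main obstacle is the algebraic reduction of this product to $\prod_{j=1}^{n-1}(1+q^j)^{n-j}$; the key identity is $1 - q^{2k} = (1 - q^k)(1 + q^k)$, which after careful bookkeeping causes the odd-exponent denominator factors to cancel against matching factors in the numerators, leaving exactly the claimed $(1+q^j)^{n-j}$ pieces. An appealing alternative is induction on $n$, noting that the ratio of consecutive target products equals $\prod_{j=1}^{n-1}(1+q^j) = F(J(P_n),q)$ by Theorem~\ref{lemma:pn}; this matches the contribution one expects from the bottom layer $P_n$ in the tetrahedral decomposition of $T_n$, suggesting a direct combinatorial decomposition of $J(T_n(\{y,g,o\}))$ as a product compatible with the orange and yellow edges joining layers.
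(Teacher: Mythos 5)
Your proposal is correct and its core is the same as the paper's: reduce to one representative by poset isomorphism, identify $Y_n(\{y,g,o\})$ with SSYT of shape $\delta_n$ and entries at most $n$, and evaluate the principal specialization of $s_{\delta_n}$. Two points of comparison. First, you make the isomorphisms explicit where the paper only asserts them (deferring to the thesis); your three maps do check out in the coordinates $c_1\overrightarrow{r}+c_2\overrightarrow{g}+c_3\overrightarrow{y}$, where $\overrightarrow{b}=\overrightarrow{g}-\overrightarrow{r}$, $\overrightarrow{o}=\overrightarrow{y}-\overrightarrow{r}$, $\overrightarrow{s}=\overrightarrow{g}-\overrightarrow{y}$, and the affine involution $\pi$ legitimately reverses only the absent red direction. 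Second, for the evaluation you take the hook-content route and leave the reduction of $\prod_{(i,j)\in\delta_n}(1-q^{n+j-i})/(1-q^{2(n-i-j)+1})$ to $\prod_{j=1}^{n-1}(1+q^j)^{n-j}$ as ``careful bookkeeping''; this identity is true and your mechanism ($1-q^{2k}=(1-q^k)(1+q^k)$ against the all-odd hook lengths) is the right one, but the cancellation is the only nontrivial computation in your argument and it is not actually carried out. The paper sidesteps it entirely via the bialternant formula: $s_{\delta_n}(x_1,\ldots,x_n)=\det(x_i^{2(n-j)})/\det(x_i^{n-j})=\prod_{1\le i<j\le n}(x_i+x_j)$, whose specialization $x_i=q^{i-1}$ gives $q^{n(\delta_n)}\prod_{d=1}^{n-1}(1+q^d)^{n-d}$ immediately, with the prefactor absorbed exactly by your weight shift $\sum(y_{i,j}-i)$ versus $\sum(y_{i,j}-1)$. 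I would either substitute that one-line evaluation for the hook-content computation or finish the bookkeeping explicitly; your inductive alternative via layers $P_j$ is plausible but, as stated, only a heuristic since you do not exhibit the claimed decomposition of $J(T_n(\{y,g,o\}))$ compatible with the orange and yellow edges.
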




\begin{proof}
The arrays $Y_n(\{g,y,o\})$ are by Definition~\ref{def:bofs} semistandard Young tableaux (SSYT) of staircase shape $\delta_n$. Semistandard Young tableaux are integer arrays which are weakly increasing across rows and strictly increasing down columns. 
It is well known that the generating function of SSYT is given by the Schur function $s_\lambda$, thus $s_{\delta_n}(x_1,x_2,\ldots,x_n)$ is the generating function for $Y_n(\{g,y,o\})$. Now
\[
s_{\delta_n}(x_1,x_2,\ldots,x_n)=\frac{\det(x_i^{2(n-j)})_{i,j=1}^n}{\det(x_i^{n-j})_{i,j=1}^n}=\displaystyle\prod_{1\le i<j\le n}\frac{x_i^2-x_j^2}{x_i-x_j}=\displaystyle\prod_{1\le i<j\le n}(x_i+x_j)
\] 
using the algebraic Schur function definition and the Vandermonde determinant (see for example~\cite{MACDONALDSYM}). The principle specialization of this generating function yields the $q$-generating function $F(J(T_n(\{g,y,o\})),q)
=\prod_{j=1}^{n-1} (1+q^j)^{n-j}$.
The posets $T_n(S)$ where $S\in \{\{r,g,s\},\{o,b,y\},\{b,g,o\},\{y,g,b\}\}$ are isomorphic to $T_n(\{g,y,o\})$ so the result follows by poset isomorphism.
\end{proof}

Since $\{r,g,y\}$ and $\{s,b,r\}$ are the only admissible subsets not addressed by  Propositions~\ref{lemma:disjoint} and \ref{lemma:connected}, we have proved Theorem~\ref{thm:3nadj}.


We now discuss bijections between the order ideals of the three color posets and tournaments. First we give the bijection between tournaments on $n$ vertices and the order ideals of $T_n(\{b,r,(g)\})$. By poset isomorphism we can then extend this bijection to any of the disjoint three-color posets of Proposition~\ref{lemma:disjoint}. See Figure~\ref{fig:brgtourn} for the bijection when $n=3$.

For this bijection we will use the modified arrays $Y_n^+(S)$ from Definition~\ref{def:yplus}. Recall that $Y_n^+(S)$ are exactly the arrays $Y_n(S)$ with an additional column 0 added with fixed entries $1~2~3~\ldots~n$.
The colors blue and red correspond to inequalities on $Y_n^+(\{b,r,(g)\})$ such that as one goes up the southwest to northeast diagonals at each step the next entry has the choice between staying the same and decreasing by one. That is, for $\alpha\in Y_n^+(\{b,r,(g)\})$ and all $j\ge 1$, either $\alpha_{i,j} = \alpha_{i+1,j-1} \mbox{ or } \alpha_{i,j} = \alpha_{i+1,j-1} - 1$. Therefore since each of the ${n\choose 2}$ entries of the array not in the 0th column has exactly two choices of values given the value of the entry to the southwest, we may consider each array entry $\alpha_{i,j}$ with $j\ge 1$ to symbolize the outcome of the game between $i$ and $i+j$ in a tournament. If $\alpha_{i,j} = \alpha_{i+1,j-1}$ say the outcome of the game between $i$ and $i+j$ is an upset, and if not, the outcome is not an upset. Thus we have a bijection between tournaments on $n$ vertices and the arrays $Y_n^+(\{b,r,(g)\})$. Then Proposition~\ref{prop:yplusbij} completes the bijection between tournaments on $n$ vertices and the order ideals of $T_n(\{b,r,(g)\})$. 

\begin{figure}[htbp]
\[
\begin{array}{ccc}
1&1 &1 \\
2&2& \\
3&&
\end{array}
\hspace{.3cm}
\begin{array}{ccc}
1&1 &\textcolor{red}{2} \\
2&2&\\
3&&
\end{array}
\hspace{.3cm}
\begin{array}{ccc}
1&1 &2 \\
2&\textcolor{red}{3}&\\
3&&
\end{array}
\hspace{.3cm}
\begin{array}{ccc}
1&1 & \textcolor{red}{3} \\
2&\textcolor{red}{3}&\\
3&&
\end{array}
\hspace{.3cm}
\begin{array}{ccc}
1&\textcolor{red}{2} & 1 \\
2&2&\\
3&&
\end{array}
\hspace{.3cm}
\begin{array}{ccc}
1&\textcolor{red}{2} & \textcolor{red}{2} \\
2&2&\\
3&&
\end{array}
\hspace{.3cm}
\begin{array}{ccc}
1&\textcolor{red}{2} & 2 \\
2&\textcolor{red}{3}&\\
3&&
\end{array}
\hspace{.3cm}
\begin{array}{ccc}
1&\textcolor{red}{2} & \textcolor{red}{3} \\
2&\textcolor{red}{3}&\\
3&&
\end{array}
\]

\begin{center}
\includegraphics[scale=0.28]{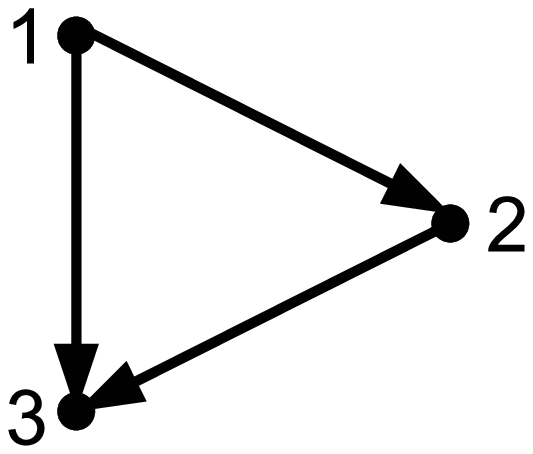}
\includegraphics[scale=0.28]{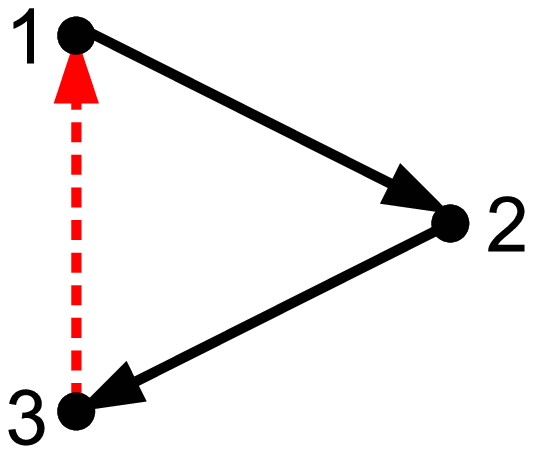}
\includegraphics[scale=0.28]{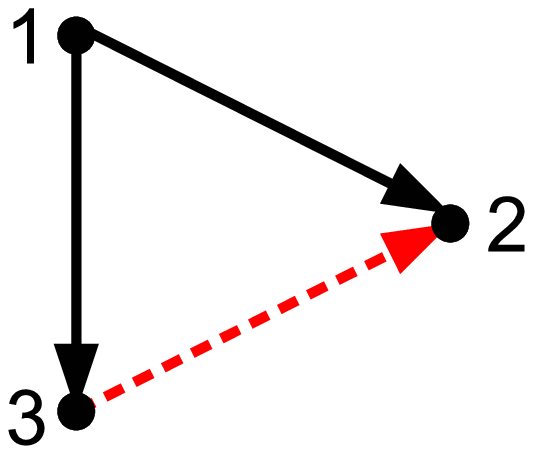}
\includegraphics[scale=0.28]{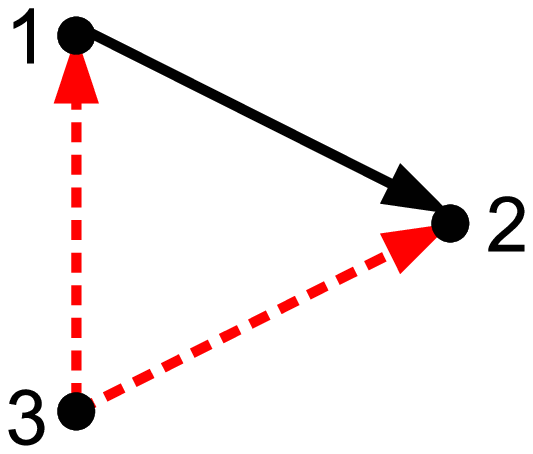}
\includegraphics[scale=0.28]{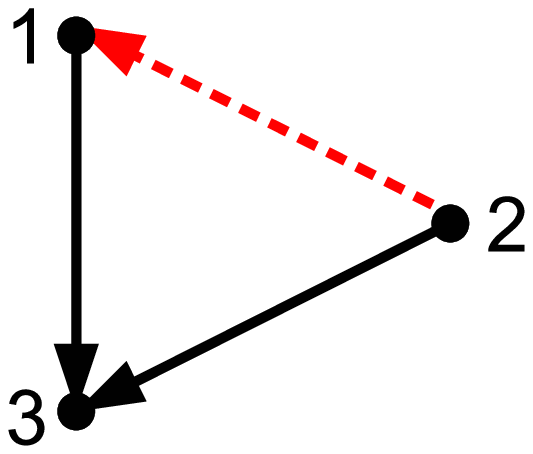}
\includegraphics[scale=0.28]{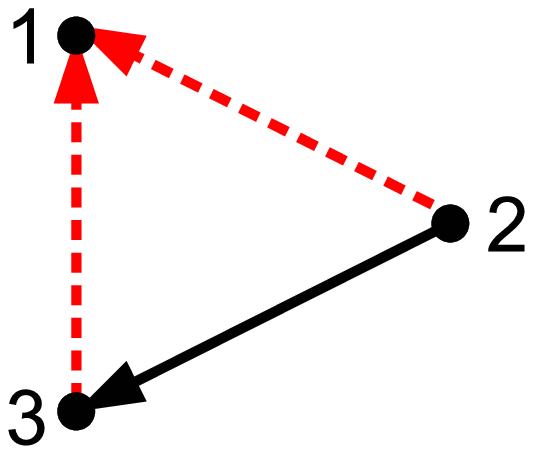}
\includegraphics[scale=0.28]{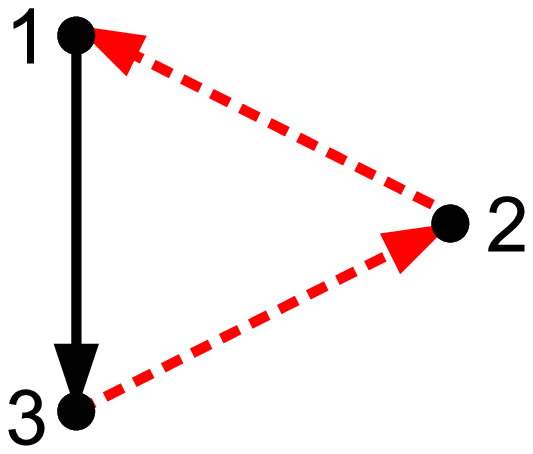}
\includegraphics[scale=0.28]{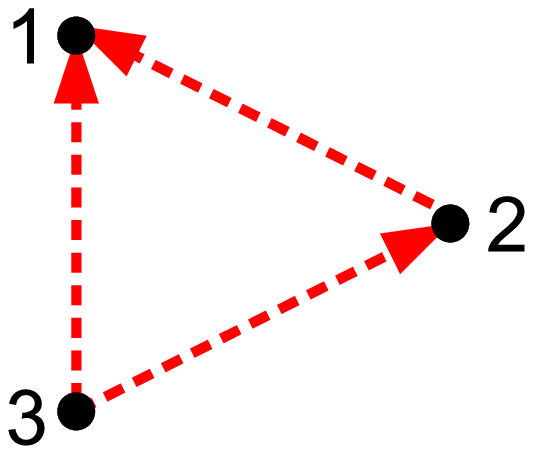}
\end{center}
\caption[The bijection between $Y_3^+(\{b,r,(g)\})$ and tournaments]{The bijection between the arrays $Y_3^+(\{b,r,(g)\})$ and tournaments on three vertices. The tournament upsets (dashed edges) correspond to the array entries equal to their southwest diagonal neighbor.}
\label{fig:brgtourn}
\end{figure}

The bijection between the order ideals of the connected three color poset $T_n(\{o,y,g\})$ and tournaments on $n$ vertices is due to Sundquist in~\cite{SUNDQUIST} and involves repeated use of jeu de taquin and column deletion to go from $Y_n(\{o,y,g\})$ (which are SSYT of shape $\delta_n$ and entries at most $n$) to objects which he calls tournament tableaux. 

We now describe Sundquist's bijection.
Begin with a SSYT $\alpha$ of shape $\delta_n$  and entries at most $n$ (that is, an element of $Y_n(\{o,y,g\})$. If there are $k$ 1s in $\alpha$ then apply jeu de taquin $k$ times to $\alpha$. If the resulting shape is not $\delta_{n-1}$ apply column deletion to the elements of $\delta_n \backslash \delta_{n-1}$ beginning at the bottom of the tableau. Put the elements removed from $\alpha$ in either step in increasing order in the first row of a new tableau $\beta$. 

\begin{figure}[htbp]
\centering
\includegraphics[scale=0.6]{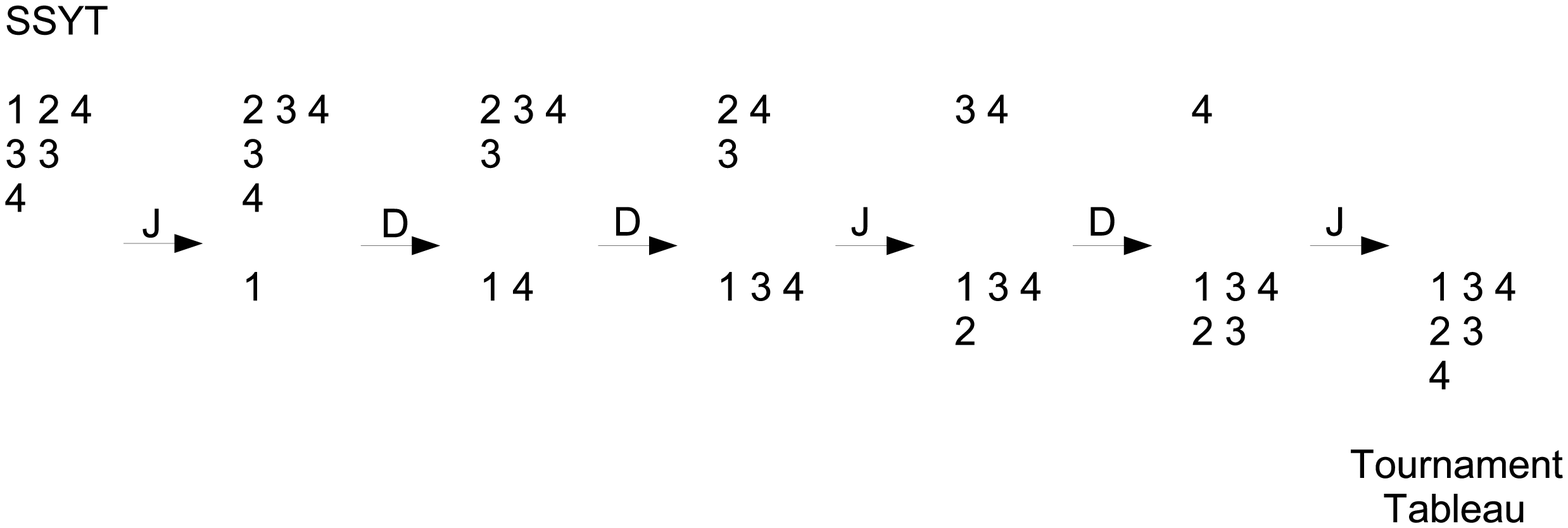}
\caption[Sundquist's bijection between SSYT and tournament tableaux.]{An example of Sundquist's bijection between a SSYT of shape $\delta_4$ and a tournament tableau. The resulting tournament tableau represents the tournament on 4 vertices with upsets between vertices 1 \& 3, 1 \& 4, 2 \& 3, and 3 \& 4.}
\label{fig:sundbij}
\end{figure}

Continue this process in like manner. At the $i$th iteration remove the entries equal to $i$ from $\alpha$ by repeated use of jeu de taquin, then bump by column deletion the elements of $\delta_{n-i+1} \backslash \delta_{n-i}$. Then make all the elements removed or bumped out of $\alpha$ during the $i$th iteration into the $i$th row of $\beta$. $\beta$ is called a tournament tableau and has the property that in the $i$th row the elements are greater than or equal to $i$ and all elements strictly greater than $i$ in row $i$ must not be repeated in that row. Thus there is an upset between $i$ and $j$ in the corresponding tournament ($i<j$) if and only if $j$ appears in row $i$ of the tournament tableau. See Figure~\ref{fig:sundbij} for an example of this bijection, where the arrows labeled J are where jeu de taquin was applied and the arrows labeled D are where column deletion was applied.

\section{The four color posets---ASMs and TSSCPPs}
\label{sec:four}

In this section we prove Theorem~\ref{thm:4four} about the number of the order ideals of the four color posets by way of two propositions. The first proposition shows that $n\times n$ ASMs are in bijection with the order ideals $J(T_n(\{b,y,o,g\}))$, and the second proposition shows that TSSCPPs inside a $2n\times 2n\times 2n$ box are in bijection with $J(T_n(\{r,g,o,(y)\}))$. Now, there is only one ASM poset inside $T_n$ but there are actually six different sets of colors $S$ such that $T_n(S)$ (or $T_n^*(S)$) is isomorphic to the TSSCPP poset, $T_n(\{r,g,o,(y)\})$.  Thus once we have the correspondence between $J(T_n(\{r,g,o,(y)\}))$ and TSSCPPs inside a $2n\times 2n\times 2n$ box, we obtain the bijection for the other five sets of colors through poset isomorphism. In this section we also discuss a generalization of the $q$-ification of the ASM counting formula~(\ref{eq:product}) which contains both the ASM and Catalan numbers as specializations.

Given an $n \times n$ ASM $A$ it is customary to consider the following bijection to objects called monotone triangles of order $n$~\cite{BRESSOUDBOOK}.
For each row of $A$ note which columns have a partial sum (from the top) of 1 in that row. Record the numbers of the columns in which this occurs 
in increasing order. This gives a triangular array of numbers 1 to $n$. 
This process can be easily reversed, and is thus a bijection. Monotone triangles can be defined as objects in their own right as follows~\cite{BRESSOUDBOOK}. 

\begin{definition}
Monotone triangles of order $n$
are all triangular arrays of integers with bottom row 1~2~3~\ldots~$n$ 
and integer entries $a_{ij}$ such that $a_{i,j} \le a_{i-1,j} \le a_{i,j+1} \mbox{ and } a_{ij} < a_{i,j+1}$.
\end{definition}

We construct a bijection between monotone triangles of order $n$ and the arrays $Y_n^+(\{b,y,o,g\})$ in the following manner (see Figure~\ref{fig:mtasmbija}).
If we rotate the monotone triangle clockwise by $\frac{\pi}{4}$ we obtain a semistandard Young tableau of staircase shape $\delta_{n+1}$ with entries at most $n$ whose northeast to southwest diagonals are weakly increasing. These are the arrays $Y_n^+(\{b,y,o,g\})$ (see Definition~\ref{def:yplus}).  
Thus we have the following proposition.

\begin{figure}[htbp]
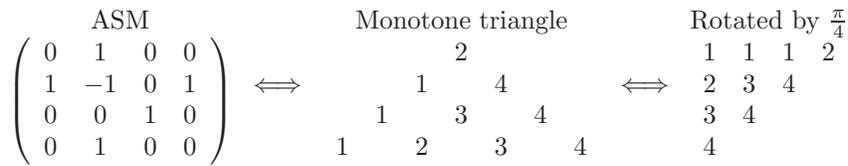

\[
\begin{array}{c}
\mbox{ASM}\\
\left( \begin{array}{cccc}
0 & 1 & 0 & 0\\
1 & -1 & 0 & 1\\
0 & 0 & 1 & 0\\
0 & 1 & 0 & 0\end{array} \right)\end{array}
\Longleftrightarrow
\begin{array}{c}
\mbox{Monotone triangle}\\
\begin{array}{ccccccc}
  & & & 2 & & & \\
  & & 1 & & 4 & & \\
  & 1 & & 3 & & 4 & \\
  1 & & 2 & & 3 & & 4\end{array} \end{array}
\Longleftrightarrow
\begin{array}{c}
\mbox{Rotated by $\frac{\pi}{4}$}\\
\begin{array}{cccc}
1&1&1&2\\
2&3&4&\\
3&4&&\\
4&&
\end{array}\end{array}
\]
\caption{An example of the bijection between $n\times n$ ASMs and $Y_n^+(\{b,y,o,g\})$}
\label{fig:mtasmbija}
\end{figure}

\begin{proposition}
\label{prop:asmbij}
The order ideals $J(T_n(\{b,y,o,g\}))$ are in bijection with $n\times n$ alternating sign matrices. 
\end{proposition}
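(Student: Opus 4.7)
The plan is to chain together three bijections: the classical ASM--monotone triangle correspondence, the $\pi/4$ rotation of monotone triangles into staircase arrays with a prescribed leftmost column, and the bijection of Proposition~\ref{prop:yplusbij} between $Y_n^+(\{b,y,o,g\})$ and $J(T_n(\{b,y,o,g\}))$. The first bijection is the one already recalled in the paper: given an ASM $A$, in each row $i$ record the set of column indices $j$ for which the partial column sum $\sum_{i'\le i} A_{i',j}$ equals $1$, listed in increasing order; this yields a monotone triangle of order $n$, and the procedure is reversible. No new work is required here.

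The core content of the proof is to verify carefully that the clockwise rotation by $\pi/4$ identifies monotone triangles of order $n$ with the arrays in $Y_n^+(\{b,y,o,g\})$. I would make the map explicit by declaring $y_{i,j} = a_{n-j,\,i}$, where $a_{p,q}$ denotes the $q$th entry of row $p$ of the monotone triangle (with $p$ counted so that row $p$ has $p$ entries and row $n$ is the prescribed bottom row $1\,2\cdots n$). The range $1\le i\le n$, $0\le j\le n-i$ matches the shape of $Y_n^+$, and the boundary condition $a_{n,q}=q$ translates precisely to the fixed column $y_{i,0}=i$. The $g$-constraint $i\le y_{i,j}\le i+j$ follows inductively from the monotone triangle inequalities starting from the prescribed bottom row, exactly as in the classical bound $q\le a_{p,q}\le q+(n-p)$.

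It then remains to translate the three monotone triangle inequalities into the three color constraints. The strict condition $a_{p,q}<a_{p,q+1}$ becomes $y_{i,j}<y_{i+1,j}$ (orange); the weak condition $a_{p,q}\le a_{p-1,q}$ becomes $y_{i,j}\le y_{i,j+1}$ (yellow); and the remaining condition $a_{p-1,q}\le a_{p,q+1}$ becomes $y_{i,j+1}\le y_{i+1,j}$, which is just a reindexing of the blue condition $y_{i,j}\le y_{i+1,j-1}$. These are routine substitutions under $y_{i,j}=a_{n-j,i}$, and they cover exactly the colors in $\{b,y,o,g\}$, so the rotation is a bijection $\mathrm{MT}_n\to Y_n^+(\{b,y,o,g\})$.

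Composing with the classical ASM--MT correspondence and then with Proposition~\ref{prop:yplusbij} yields the desired bijection between $n\times n$ alternating sign matrices and $J(T_n(\{b,y,o,g\}))$. The only part requiring genuine care is the dictionary of Paragraph 3: matching the strict/weak pattern and the direction of each monotone triangle inequality to the correct colored constraint in $Y_n^+$. Once the indexing convention $y_{i,j}=a_{n-j,i}$ is fixed, this is essentially bookkeeping, and no further ingredients are needed beyond results already established in the excerpt.
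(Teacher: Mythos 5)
Your proof is correct and follows exactly the paper's route: ASM $\to$ monotone triangle $\to$ $\pi/4$ rotation onto $Y_n^+(\{b,y,o,g\})$ $\to$ Proposition~\ref{prop:yplusbij}; your explicit coordinate map $y_{i,j}=a_{n-j,i}$ and the resulting dictionary (strict rows $\leftrightarrow$ orange, the two interlacing inequalities $\leftrightarrow$ yellow and blue, the bound $i\le y_{i,j}\le i+j$ from the fixed bottom row) just spells out the details the paper leaves to the reader.
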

\begin{proof}
From the above discussion we see that $n\times n$ ASMs are in bijection with the arrays $Y_n^+(\{b,y,o,g\})$. Then by Proposition~\ref{prop:yplusbij}, the arrays $Y_n^+(\{b,y,o,g\})$ are in bijection with the order ideals $J(T_n(\{b,y,o,g\}))$.
\end{proof}

\begin{figure}[htbp]
\[
\begin{array}{cc}
1 &1 \\
2&
\end{array}
\hspace{.7cm}
\begin{array}{cc}
1 &1 \\
3&
\end{array}
\hspace{.7cm}
\begin{array}{cc}
1 &2 \\
2&
\end{array}
\hspace{.7cm}
\begin{array}{cc}
1 & 2 \\
3&
\end{array}
\hspace{.7cm}
\begin{array}{cc}
2 & 2 \\
3&
\end{array}
\hspace{.7cm}
\begin{array}{cc}
1 & 3 \\
3&
\end{array}
\hspace{.7cm}
\begin{array}{cc}
2 & 3 \\
3&
\end{array}
\]
\caption{The arrays $Y_3(\{b,y,o,g\})$ which are in bijection with $3\times 3$ ASMs}
\label{fig:byog3}
\end{figure}

The lattice of monotone triangles of order $n$ (which, as we have seen, is equivalent to $J(T_n(\{b,y,o,g\}))$) is the smallest lattice containing the strong Bruhat order on the symmetric group as a subposet, that is, it is the MacNeille completion of the Bruhat order~\cite{TREILLIS}. This relationship was used in~\cite{READING_ORDER_DIMENSION} to calculate the order dimension of the strong Bruhat order in type $A_n$. The order dimension of a poset $P$ is the smallest $d$ such that $P$ can be embedded as a subposet of $\mathbb{N}^d$ with componentwise order~\cite{TROTTER}. The proof of the order dimension of $A_n$ proceeds by looking at, in our notation, the poset $T_n(\{g,b\})$, noting that each component is the product of chains, and using these chains to form a symmetric chain decomposition. Then (by another theorem in~\cite{READING_ORDER_DIMENSION}) the order dimension equals the number of chains in this symmetric chain decomposition, thus the order dimension of $A_n$ under the strong Bruhat order equals $\lfloor\frac{(n+1)^2}{4}\rfloor$.

The rank generating function $F(J(T_n(\{b,y,o,g\})),q)$ is not equal to the direct $q$-ification of the product formula (\ref{eq:product}) and does not factor nicely. The previously discussed class of tetrahedral posets whose number of order ideals factors but whose rank generating function does not factor is the two color class in which $T_n(\{b,g\})$ belongs~(see the proof of Theorem~\ref{thm:twoadj} in Section~\ref{sec:two}). The number of order ideals of posets in this class is given by a product of Catalan numbers. The rank generating function for the lattice of order ideals of these posets is not given by a product formula but rather by a product of the Carlitz-Riordan $q$-Catalan numbers which are defined by a recurrence rather than an explicit formula. This gives hope that even though $F(J(T_n(\{b,y,o,g\})),q)$ does not factor nicely, perhaps it may be found to satisfy some sort of recurrence. See Figure~\ref{fig:byog3} for the arrays $Y_3(\{b,y,o,g\})$ corresponding to $3\times 3$ ASMs.

The other six posets $J(T_n(S))$ with $S$ admissible and $|S|=4$ are each in bijection with totally symmetric self-complementary plane partitions (see Definition~\ref{def:tsscpp}). We give this bijection for $S=\{g,r,o,(y)\}$ and infer the other bijections through poset isomorphism. See Figure~\ref{fig:ryog3} for the six different sets of arrays each corresponding to TSSCPPs inside a $6\times 6\times 6$ box.

Because of the symmetry conditions, there are relatively few boxes in any TSSCPP which determine it. So we can restrict to a fundamental domain in the following manner (see Figure~\ref{fig:tsscppbijb}). 
Given a TSSCPP $t=\{t_{i,j}\}_{1\le i,j\le 2n}$ take a fundamental domain consisting of the triangular array of integers $\{t_{i,j}\}_{n+1\le i\le j\le 2n}$. In this triangular array $t_{i,j}\ge t_{i+1,j}\ge t_{i+1,j+1}$ since $t$ is a plane partition. Also for these values of $i$ and $j$ the entries $t_{i,j}$ satisfy
$0\le t_{i,j}\le 2n+1-i$.  Now if we reflect this array about a vertical line then rotate clockwise by $\frac{\pi}{4}$ we obtain a staircase shape array $x$ whose entries $x_{i,j}$ satisfy the conditions $x_{i,j}\le x_{i,j+1}\le x_{i+1,j}$ and $0\le x_{i,j}\le j$. The set of all such arrays is equivalent to $X_n(\{g,r,o,(y)\})$ (see Definition~\ref{def:aofs}). Now add $i$ to each entry in row~$i$ to obtain the arrays $Y_n^+(\{g,r,o,(y)\})$. This gives us the following proposition.

\begin{figure}[htbp]
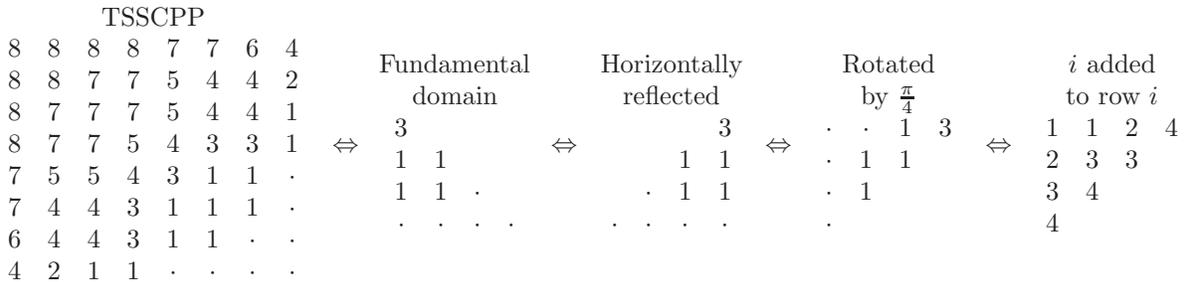

\[
\begin{array}{c}
\mbox{TSSCPP}\\
\begin{array}{cccccccc}
8&8&8&8&7&7&6&4\\
8&8&7&7&5&4&4&2\\
8&7&7&7&5&4&4&1\\
8&7&7&5&4&3&3&1\\
7&5&5&4&3&1&1&\cdot\\
7&4&4&3&1&1&1&\cdot\\
6&4&4&3&1&1&\cdot&\cdot\\
4&2&1&1&\cdot&\cdot&\cdot&\cdot
\end{array}
\end{array}
\Leftrightarrow
\begin{array}{c}
\mbox{Fundamental}\\
\mbox{domain}\\
\begin{array}{cccc}
3&&&\\
1&1&&\\
1&1&\cdot&\\
\cdot&\cdot&\cdot&\cdot
\end{array}
\end{array}
\Leftrightarrow
\begin{array}{c}
\mbox{Horizontally}\\
\mbox{reflected}\\
\begin{array}{cccc}
&&&3\\
&&1&1\\
&\cdot&1&1\\
\cdot&\cdot&\cdot&\cdot
\end{array}
\end{array}
\Leftrightarrow
\begin{array}{c}
\mbox{Rotated}\\
\mbox{by $\frac{\pi}{4}$}\\
\begin{array}{cccc}
\cdot&\cdot&1&3\\
\cdot&1&1&\\
\cdot&1&&\\
\cdot&&&
\end{array}
\end{array}
\Leftrightarrow
\begin{array}{c}
\mbox{$i$ added}\\
\mbox{to row $i$}\\
\begin{array}{cccc}
1&1&2&4\\
2&3&3&\\
3&4&&\\
4&&&
\end{array}\end{array}
\]
\caption[An example of the bijection between TSSCPPs and $Y_n^+(\{r,g,o,(y)\})$]{An example of the bijection between TSSCPPs inside a $2n\times 2n\times 2n$ box and $Y_n^+(\{r,g,o,(y)\})$}
\label{fig:tsscppbijb}
\end{figure}

\begin{proposition}
\label{prop:tsscppbij}
The order ideals $J(T_n(\{r,g,o,(y)\}))$ are in bijection with totally symmetric self complementary plane partitions inside a $2n\times 2n\times 2n$ box. 
\end{proposition}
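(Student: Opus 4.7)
The plan is to establish the bijection by a chain of three weight-preserving bijections and then invoke Proposition~\ref{prop:yplusbij}. First I would argue that a TSSCPP $t=\{t_{i,j}\}_{1\le i,j\le 2n}$ inside a $2n\times 2n\times 2n$ box is completely determined by the triangular fundamental domain $\{t_{i,j}\}_{n+1\le i\le j\le 2n}$. The total symmetry condition forces all six permutations of any cube $(i,j,k)\in t$ to lie in $t$, which allows me to reconstruct the whole plane partition from the restriction to any fundamental region of the $S_3$ action. The self-complementarity condition, which identifies $t$ with its complement inside the box, then identifies the remaining half of the domain with the complement of the fundamental triangle, so indeed the triangle $n+1\le i\le j\le 2n$ determines $t$.

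Second, I need to read off the correct inequalities on this fundamental triangle. The plane partition condition $t_{i,j}\ge t_{i+1,j}$ and $t_{i,j}\ge t_{i,j+1}$ together with the choice of representative gives the two chain conditions $t_{i,j}\ge t_{i+1,j}\ge t_{i+1,j+1}$ shown in the figure. The upper bound $0\le t_{i,j}\le 2n+1-i$ is the one place where self-complementarity enters directly: the largest entry permitted in row $i$ of the fundamental triangle is $2n-i+1$, since a bigger entry there would, by total symmetry, force a cube whose complement lies outside the fundamental triangle and violate self-complementarity. I would check these two facts carefully; this is the main subtlety of the argument and the step I expect to be the principal obstacle, since it requires being precise about the action of $S_3$ on the box and how symmetry/complementarity pin down the representative.

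Third, I would apply the purely combinatorial manipulation indicated in the excerpt: reflect the triangle across a vertical line, then rotate clockwise by $\pi/4$, producing an array $x$ of staircase shape $\delta_n$ with $x_{i,j}\in\mathbb{Z}$. Under this rigid motion the inequality $t_{i,j}\ge t_{i+1,j}\ge t_{i+1,j+1}$ becomes $x_{i,j}\le x_{i,j+1}\le x_{i+1,j}$, and the bound $0\le t_{i,j}\le 2n+1-i$ becomes $0\le x_{i,j}\le j$. Comparing with Definition~\ref{def:aofs}, these are precisely the defining conditions for $X_n(\{r,g,o,(y)\})$: the bound $0\le x_{i,j}\le j$ and green are built in, $x_{i,j}\le x_{i,j+1}$ is the yellow condition (induced), $x_{i,j}\le x_{i+1,j}$ restated is the orange condition, and $x_{i,j}\le x_{i-1,j+1}$ (equivalently $x_{i+1,j}\ge x_{i,j+1}$) is the red condition. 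So the transformation is a bijection between TSSCPPs in a $2n\times 2n\times 2n$ box and $X_n(\{r,g,o,(y)\})$.

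Finally, I would add $i$ to each entry in row $i$ to convert $X_n(\{r,g,o,(y)\})$ to $Y_n(\{r,g,o,(y)\})$ as in the proof of Proposition~\ref{prop:yofs}, then append the fixed $0$th column $1,2,\ldots,n$ to land in $Y_n^+(\{r,g,o,(y)\})$ by Definition~\ref{def:yplus}. Proposition~\ref{prop:yplusbij} then delivers the desired bijection with $J(T_n(\{r,g,o,(y)\}))$. Composing the four bijections yields the statement. No delicate calculation is needed after the fundamental-domain step; the remaining work is bookkeeping on coordinates and a check that the three inequality constraints on the fundamental triangle match the three color conditions (red, orange, induced yellow) on $X_n$.
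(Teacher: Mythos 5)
Your proposal follows the paper's proof essentially verbatim: restrict a TSSCPP to the fundamental triangle $\{t_{i,j}\}_{n+1\le i\le j\le 2n}$, note the plane-partition chain and the bound $0\le t_{i,j}\le 2n+1-i$, reflect and rotate to land in $X_n(\{r,g,o,(y)\})$, shift to $Y_n^+(\{r,g,o,(y)\})$, and invoke Proposition~\ref{prop:yplusbij}. The one cosmetic slip is the direction of the middle inequality: reindexing the red condition $x_{i,j}\le x_{i-1,j+1}$ gives $x_{i+1,j}\le x_{i,j+1}$, so the chain on the rotated array should read $x_{i,j}\le x_{i+1,j}\le x_{i,j+1}$ (as the worked example in Figure~\ref{fig:tsscppbijb} confirms), but this does not affect the substance of the argument.
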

\begin{proof}
From the above discussion we see that TSSCPPs are in bijection with the arrays $Y_n^+(\{r,g,o,(y)\})$. Then by Proposition~\ref{prop:yplusbij}, $Y_n^+(\{r,g,o,(y)\})$ is in bijection with $J(T_n(\{r,g,o,(y)\}))$.
\end{proof}

\begin{figure}[htbp]
\begin{eqnarray*}
Y_3(\{r,g,o,(y)\}) &=& 
\left\{
\begin{array}{cc}
1 &1 \\
2&
\end{array},
\begin{array}{cc}
1 &2 \\
2&
\end{array},
\begin{array}{cc}
1 &2 \\
3&
\end{array},
\begin{array}{cc}
1 & 3 \\
2&
\end{array},
\begin{array}{cc}
2 & 2 \\
3&
\end{array},
\begin{array}{cc}
1 & 3 \\
3&
\end{array},
\begin{array}{cc}
2 & 3 \\
3&
\end{array}
\right\}
\\
&&\\
Y_3(\{r,b,(g),y\}) &=&
\left\{
\begin{array}{cc}
1 &1 \\
2&
\end{array},
\begin{array}{cc}
1 &2 \\
2&
\end{array},
\begin{array}{cc}
1 &2 \\
3&
\end{array},
\begin{array}{cc}
2 & 2 \\
2&
\end{array},
\begin{array}{cc}
2 & 2 \\
3&
\end{array},
\begin{array}{cc}
1 & 3 \\
3&
\end{array},
\begin{array}{cc}
2 & 3 \\
3&
\end{array}
\right\}
\\
&&\\
Y_3(\{y,s,(g),r\}) &=&
\left\{
\begin{array}{cc}
1 &1 \\
2&
\end{array},
\begin{array}{cc}
1 &2 \\
2&
\end{array},
\begin{array}{cc}
1 &2 \\
3&
\end{array},
\begin{array}{cc}
2 & 2 \\
2&
\end{array},
\begin{array}{cc}
2 & 2 \\
3&
\end{array},
\begin{array}{cc}
2 & 3 \\
2&
\end{array},
\begin{array}{cc}
2 & 3 \\
3&
\end{array}
\right\}
\\
&&\\
Y_3(\{y,s,(g),b\}) &=&
\left\{
\begin{array}{cc}
1 &1 \\
2&
\end{array},
\begin{array}{cc}
1 &2 \\
2&
\end{array},
\begin{array}{cc}
1 &1 \\
3&
\end{array},
\begin{array}{cc}
1 & 2 \\
3&
\end{array},
\begin{array}{cc}
2 & 2 \\
2&
\end{array},
\begin{array}{cc}
2 & 2 \\
3&
\end{array},
\begin{array}{cc}
2 & 3 \\
3&
\end{array}
\right\}
\\
&&\\
Y_3(\{o,s,(b),g\}) &=&
\left\{
\begin{array}{cc}
1 &1 \\
2&
\end{array},
\begin{array}{cc}
1 &2 \\
2&
\end{array},
\begin{array}{cc}
1 &1 \\
3&
\end{array},
\begin{array}{cc}
1 & 2 \\
3&
\end{array},
\begin{array}{cc}
2 & 1 \\
3&
\end{array},
\begin{array}{cc}
2 & 2 \\
3&
\end{array},
\begin{array}{cc}
2 & 3 \\
3&
\end{array}
\right\}
\\
&&\\
Y_3(\{r,b,(g),s\}) &=&
\left\{
\begin{array}{cc}
1 &1 \\
2&
\end{array},
\begin{array}{cc}
1 &2 \\
2&
\end{array},
\begin{array}{cc}
2 &1 \\
2&
\end{array},
\begin{array}{cc}
1 & 2 \\
3&
\end{array},
\begin{array}{cc}
2 & 2 \\
2&
\end{array},
\begin{array}{cc}
2 & 2 \\
3&
\end{array},
\begin{array}{cc}
2 & 3 \\
3&
\end{array}
\right\}
\end{eqnarray*}
\caption[The six sets of arrays in bijection with TSSCPPs for $n=3$]{
The six sets of arrays which are in bijection with TSSCPPs inside a $6\times 6\times 6$ box}
\label{fig:ryog3}
\end{figure}

%
Equipped with Propositions~\ref{prop:asmbij} and~\ref{prop:tsscppbij} we are now ready to prove Theorem~\ref{thm:4four}.

\begin{thm219}
If $S$ is an admissible subset of $\{r,b,g,o,y,s\}$ and $|S|=4$ then 
\begin{equation}
\label{eq:4}
|J(T_n(S))|=\displaystyle\prod_{j=0}^{n-1} \frac{(3j+1)!}{(n+j)!}=\displaystyle\prod_{1\le i\le j\le k\le n-1} \frac{i+j+k+1}{i+j+k-1}.
\end{equation}
\end{thm219}
\begin{proof}
The posets $T_n(S)$ for $S$ admissible and $|S|=4$ are $T_n(\{g,y,b,o\})$, the three isomorphic posets 
$T_n(\{r,o,(y),g\})$, $T_n(\{r,b,(g),y\})$, and $T_n(\{y,s,(g),r\})$, and the three posets dual to these, $T_n(\{y,s,(g),b\})$, $T_n(\{o,s,(b),g\})$, $T_n(\{r,b,(g),s\})$. In Proposition~\ref{prop:asmbij} we showed that the order ideals of $T_n(\{g,y,b,o\})$ are in bijection with $n\times n$ ASMs and in Proposition~\ref{prop:tsscppbij} we showed that the order ideals of $T_n(\{r,o,(y),g\})$ are in bijection with TSSCPPs inside a $2n\times 2n\times 2n$ box.  Therefore by poset isomorphism TSSCPPs inside a $2n\times 2n\times 2n$ box are in bijection with the order ideals of any of  $T_n(\{r,o,(y),g\})$, $T_n(\{r,b,(g),y\})$, $T_n(\{y,s,(g),r\})$, $T_n^*(\{y,s,(g),b\})$, $T_n^*(\{o,s,(b),g\})$, or  $T_n^*(\{r,b,(g),s\})$. Thus by the enumeration of ASMs in~\cite{ZEILASM} and \cite{KUP_ASM_CONJ} and the enumeration of TSSCPPs in~\cite{ANDREWS_PPV} we have the above formula for the number of order ideals.
\end{proof}

As in the two-color Catalan case of Theorem~\ref{thm:twoadj}, the direct $q$-ification of the counting formula~(\ref{eq:4}), does not equal the rank generating function of either ASMs or TSSCPPs, but there may be a way to find a weight on ASMs or TSSCPPs which would yield the $q$-product formula as generating function. An interesting generalization of the $q$-product formula which has connections with the MacMahon $q$-Catalan numbers is found in the thesis of Thomas Sundquist~\cite{SUNDQUIST}. Sundquist studies the following product formula:
\begin{equation}
\label{eq:qpasm}
A(n,p;q)= \frac{s_{(p\delta_n)'}(n p)}{s_{p\delta_n}(n)} q^{-{p\choose 2}\sum_{i=1}^{n-1} i^2}=\displaystyle\prod_{k=0}^{n-1} \frac{(np+k)!_q k!_q}{(kp+k+p)!_q (pk+k)!_q}
\end{equation}
where $s_\lambda$ is the Schur function and $\delta_n$ is the staircase shape partition.

$A(n,2;q)$ reduces to the $q$-ified ASM product formula.
\[
A(n,2;q)=\displaystyle\prod_{k=0}^{n-1} \frac{(3k+1)!_q}{(n+k)!_q}
\]

$A(2,p;q)$ reduces to the MacMahon $q$-Catalan numbers.
\[
A(2,p;q)=\frac{1}{[p+1]_q}{2p\brack p}_q
\]

Sundquist was able to prove that $A(n,p;q)$ is a polynomial in $q$, but he was unable to find a combinatorial interpretation (even for $A(n,p;1)$) for general $n$ and $p$. It is well-known that $\frac{1}{[p+1]_q}{2p\brack p}_q$ for fixed $p$ is the generating function for Catalan sequences (or Dyck paths) weighted by major index. $A(n,p;q)$ suggests a kind of generalization of major index to a larger class of objects including those counted by the $q$-ASM numbers, but that larger class of objects is still unknown as is an interpretation for this $q$-weight on ASMs or TSSCPPs. 

$A(n,p;1)$ is known to have connections to physics, however.
An expression equal to $A(n,p;1)$ appears in~\cite{QKZJOSEPH} (Equation (5.5)) and comes from a weighted sum of components of the ground state of the $A_{k-1}$ IRF model in statistical physics. In this paper the authors comment that a combinatorial interpretation for $A(n,p;1)$ as a generalization of ASMs would be very interesting.

\section{Connections between ASMs, TSSCPPs, and tournaments}
\label{sec:3to4}

In this section we discuss the expansion of the tournament generating function as a sum over ASMs and derive a new expansion of the tournament generating function as a sum over TSSCPPs. We discuss the implications of the combination of these two expansions and also describe which subsets of tournaments correspond to TSSCPPs and ASMs. 

The alternating sign matrix conjecture first arose when David Robbins and Howard Rumsey expanded a generalization of the determinant of a square matrix called the $\lambda$-determinant as a sum over all $n\times n$ alternating sign matrices (\cite{ROBBINSRUMSEY}) and wondered how many elements would be in the sum. When one combines this $\lambda$-determinant formula with the Vandermonde determinant, 
one obtains the following theorem. (This theorem is implicit in~\cite{ROBBINSRUMSEY}, but appears explicitly in~\cite{BRESSOUDBOOK}.) In this theorem we will need the notion of inversion number for ASMs. The inversion number of an ASM $A$ is defined as $I(A)=\sum A_{ij} A_{k\ell}$ where the sum is over all $i,j,k,\ell$ such that $i>k$ and $j<\ell$. This definition extends the usual notion of inversion in a permutation matrix. 

\begin{theorem}[Robbins-Rumsey]
\label{thm:lambdadet}
Let $A_n$ be the set of $n\times n$ alternating sign matrices, and for $A\in A_n$ let $I(A)$ denote the inversion number of $A$ and $N(A)$ the number of $-1$ entries in $A$, then
\begin{equation}
\label{eq:2enumeq}
\displaystyle\prod_{1\le i < j\le n} (x_i + \lambda x_j) = \displaystyle\sum_{A\in A_n} \lambda^{I(A)}\left(1+\lambda^{-1}\right)^{N(A)}\displaystyle\prod_{i,j=1}^n x_{j}^{(n-i)A_{ij}}.
\end{equation}
\end{theorem}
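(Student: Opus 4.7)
The plan is to invoke the $\lambda$-determinant machinery of Robbins and Rumsey and specialize it to a Vandermonde-like matrix. Recall that the $\lambda$-determinant $\det_\lambda$ is a deformation of the usual determinant defined on $2\times 2$ matrices by $\det_\lambda\!\left(\begin{smallmatrix}a & b\\ c & d\end{smallmatrix}\right) = ad + \lambda bc$ and extended to larger matrices by the $\lambda$-deformed Dodgson condensation identity
\[
\det_\lambda(M)\,\det_\lambda(M^{\mathrm{core}}) \;=\; \det_\lambda(M^{NW})\det_\lambda(M^{SE}) \;+\; \lambda\,\det_\lambda(M^{NE})\det_\lambda(M^{SW}),
\]
where $M^{\mathrm{core}}$ is the central $(n-2)\times(n-2)$ submatrix and the four corner pieces are the $(n-1)\times(n-1)$ submatrices obtained by deleting one extreme row and one extreme column. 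First I would verify that this recursive definition is well-defined, i.e.\ independent of the condensation path chosen.

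Second, the heart of the argument is the ASM-expansion
\[
\det_\lambda(M) \;=\; \sum_{A\in A_n}\lambda^{I(A)}(1+\lambda^{-1})^{N(A)}\prod_{i,j=1}^n M_{ij}^{A_{ij}},
\]
which I would prove by induction on $n$. The base case $n=2$ is an immediate check: the two $2\times 2$ ASMs are the identity and antidiagonal, contributing $M_{11}M_{22}$ and $\lambda M_{12}M_{21}$ respectively, with no $-1$'s, which matches the $2\times 2$ definition. For the inductive step, I would expand each of the five $\lambda$-determinants appearing in the condensation recurrence as a sum over smaller ASMs, and then establish a weight-preserving correspondence between $n\times n$ ASMs and the compatible quadruples of corner ASMs sharing a common core that arise in the expansion. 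The critical content is tracking how the inversion statistic $I$, the $-1$-count $N$, and the monomial $\prod M_{ij}^{A_{ij}}$ transform under this correspondence.

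Third, I would evaluate the $\lambda$-determinant of the specific matrix $M_{ij}=x_j^{n-i}$. A short induction using the condensation recurrence, together with the observation that each submatrix appearing is again of the same Vandermonde form on a subset of the $x_i$'s, yields
\[
\det_\lambda(M) \;=\; \prod_{1\le i<j\le n}(x_i+\lambda x_j).
\]
The base case $n=2$ gives $x_1\cdot 1 + \lambda x_2\cdot 1 = x_1 + \lambda x_2$, and the inductive step reduces via the recurrence to a straightforward identity relating the full product to the products attached to the core and to the four corner blocks. Combining this evaluation with the ASM expansion of the second step, and noting that $M_{ij}^{A_{ij}} = x_j^{(n-i)A_{ij}}$, delivers the desired formula.

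The main obstacle is the second step: constructing the weight-preserving bijection between $n\times n$ ASMs and compatible quadruples of $(n-1)\times(n-1)$ ASMs glued along an $(n-2)\times(n-2)$ core so that the $\lambda$-Dodgson recurrence matches the ASM-weighted sum. Equivalently, for each $n\times n$ ASM the total contribution of its decompositions into four smaller ASMs sharing a core must reproduce exactly $\lambda^{I(A)}(1+\lambda^{-1})^{N(A)}\prod M_{ij}^{A_{ij}}$, with the $(1+\lambda^{-1})^{N(A)}$ factor arising precisely from the binary choice available at each $-1$ entry when one assembles the corner ASMs. This combinatorial reorganization is the heart of the Robbins-Rumsey insight; everything else is bookkeeping and a routine induction.
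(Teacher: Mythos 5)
The paper offers no proof of this theorem: it is quoted from Robbins--Rumsey \cite{ROBBINSRUMSEY} (stated explicitly in \cite{BRESSOUDBOOK}), with only the remark that it follows by combining the $\lambda$-determinant expansion with the Vandermonde determinant --- which is exactly the strategy you propose, so your outline is a reconstruction of the standard original argument rather than a different route. The outline is sound, but two details deserve tightening. First, the recursive definition of $\det_\lambda$ involves no choice of ``condensation path,'' so path-independence is not the issue; the real point is that one proves the ASM sum $f_n(M)=\sum_{A\in A_n}\lambda^{I(A)}(1+\lambda^{-1})^{N(A)}\prod_{i,j}M_{ij}^{A_{ij}}$ itself satisfies the deformed Dodgson identity $f_n(M)\,f_{n-2}(M^{\mathrm{core}})=f_{n-1}(M^{NW})f_{n-1}(M^{SE})+\lambda f_{n-1}(M^{NE})f_{n-1}(M^{SW})$, and then that this recurrence (with the $1\times 1$ and $2\times 2$ initial data) determines $\det_\lambda$ for generic matrices. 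Consequently the combinatorial correspondence you must build is between \emph{pairs} --- an $n\times n$ ASM together with an $(n-2)\times(n-2)$ ASM on the left, versus a pair of $(n-1)\times(n-1)$ corner ASMs (NW with SE, or NE with SW) on the right --- not quadruples of corners glued along a common core; the factor $(1+\lambda^{-1})^{N(A)}$ does indeed emerge from the binary choices at the $-1$ entries in this matching, as you say. Second, for $M_{ij}=x_j^{\,n-i}$ the corner minors are not literally of the same Vandermonde form: $M^{NW}$, for instance, has row degrees $n-1,\dots,1$ and so equals the $(n-1)$-variable Vandermonde-type matrix with column $j$ rescaled by $x_j$; since every column of an ASM sums to $1$, $\det_\lambda$ is homogeneous of degree one in each column, and the induction closes once these scaling factors are tracked. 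With those corrections your plan is exactly the proof in \cite{ROBBINSRUMSEY} and \cite{BRESSOUDBOOK}, which the paper takes as known.
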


Note that there are ${n\choose 2}$ factors in the product on the left-hand side, thus 
$\prod_{1\le i < j\le n} (x_i + \lambda x_j)$ is the generating function for tournaments on $n$ vertices where each factor of $(x_i + \lambda x_j)$ represents the outcome of the game between $i$ and $j$ in the tournament. If $x_i$ is chosen then the expected winner, $i$, is the actual winner, and if $\lambda x_j$ is chosen then $j$ is the unexpected winner and the game is an upset. Thus in each monomial in the expansion of $\prod_{1\le i < j\le n}(x_i + \lambda x_j)$ the power of $\lambda$ equals the number of upsets and the power of $x_k$ equals the number of wins of $k$ in the tournament corresponding to that monomial.

We rewrite Theorem~\ref{thm:lambdadet} in different notation which will also be needed later.
For any staircase shape integer array $\alpha\in Y_n^+(S)$ (recall Definition~\ref{def:yplus}) let $E_{i,k}(\alpha)$ be the number of entries of value $k$ in row $i$ equal to their southwest diagonal neighbor, $E^i(\alpha)$ be the number of entries in (southwest to northeast) diagonal $i$ equal to their southwest diagonal neighbor, and $E_i(\alpha)$ be the number of entries in row $i$ equal to their southwest diagonal neighbor, that is, $E_i(\alpha)=\sum_k E_{i,k}(\alpha)$. Also let $E(\alpha)$ be the total number of entries of $\alpha$ equal to their southwest diagonal neighbor, that is, $E(\alpha)=\sum_{i} E_{i}(\alpha)=\sum_{i} E^{i}(\alpha)$. We now define variables for the content of $\alpha$. Let $C_{i,k}(\alpha)$ be the number of entries in row $i$ with value $k$ and let $C_k(\alpha)$ be the total number of entries of $\alpha$ equal to $k$, that is, $C_{k}(\alpha)=\sum_i C_{i,k}(\alpha)$. Let $N(\alpha)$ be the number of entries of $\alpha$ strictly greater than their neighbor to the west and strictly less than their neighbor to the southwest. When $\alpha\in Y_n^+(\{b,y,o,g\})$ then $N(\alpha)$ equals the number of $-1$ entries in the corresponding ASM. For example, let $\alpha$ be the following array in $Y_5^+(\{b,y,o,g\})$ which is in bijection with the ASM $A$ shown below.

\[
\begin{array}{ccccc}
1&1&2&2&3\\
2&2&3&3&\\
3&4&4&&\\
4&5&&&\\
5&&&&
\end{array}
\Longleftrightarrow
\left( \begin{array}{rrrrr}
0 & 0 & 1 & 0 & 0\\
0 & 1 & 0 & 0 & 0\\
0 & 0 & 0 & 1 & 0\\
1 & 0 & -1 & 0 & 1\\
0&0&1&0&0\end{array} \right)
\]

Then the parameters on the array $\alpha$ are as follows.
$E_{1,2}=E_{1,3}=E_{3,4}=E_{4,5}=1$ and $E_{i,j}=0$ for all other choices of $i$ and $j$.
The number of diagonal equalities in each row of $\alpha$ is given by $E_1=2, E_2=0, E_3=1, E_4=1, E_5=0$.
The number of diagonal equalities in each column of $\alpha$ is given by $E^1=0, E^2=0, E^3=1, E^4=1, E^5=2, E=4$. The number of $-1$ entries in $A$ is $N=1$. 
The content parameters on $\alpha$ are given by $C_{1,1}=2, C_{1,2}=2, C_{1,3}=1, C_{2,2}=2, C_{2,3}=2,
C_{3,3}=1$,  $C_{3,4}=2, C_{4,4}=1, C_{4,5}=1, C_{5,5}=1,
C_1=2, C_2=4, C_3=4, C_4=3, C_5=2$.

Using this notation we reformulate Theorem~\ref{thm:lambdadet} in the following way.

\begin{theorem}
\label{thm:lambdaASM}
The generating function for tournaments on $n$ vertices can be expanded as a sum over the ASM arrays $Y_n^+(\{b,y,o,g\})$ in the following way.
\begin{equation}
\label{eq:asmlambda}
\displaystyle\prod_{1\le i < j\le n} (x_i + \lambda x_j) = \displaystyle\sum_{\alpha\in Y_n^+(\{b,y,o,g\})} \lambda^{E(\alpha)} (1 + \lambda)^{N(\alpha)} \displaystyle\prod_{k=1}^n x_k^{C_k(\alpha)-1}.
\end{equation}
\end{theorem}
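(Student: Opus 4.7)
The plan is to derive the identity from the Robbins-Rumsey form \eqref{eq:2enumeq} via the bijection of Proposition~\ref{prop:asmbij} between $n\times n$ ASMs and arrays $\alpha\in Y_n^+(\{b,y,o,g\})$. First I would rewrite $(1+\lambda^{-1})^{N(A)}=\lambda^{-N(A)}(1+\lambda)^{N(A)}$ in \eqref{eq:2enumeq} and absorb the extra $\lambda^{-N(A)}$ into $\lambda^{I(A)}$, so the weight becomes $\lambda^{I(A)-N(A)}(1+\lambda)^{N(A)}$. Passing to $\alpha$ via the monotone-triangle identification $\alpha_{i,j}=a_{n-j,i}$, where $a$ is the monotone triangle of $A$, the theorem reduces to three statistic identifications: (i) $C_k(\alpha)-1=\sum_i(n-i)A_{ik}$ for every $k$; (ii) $N(\alpha)=N(A)$; (iii) $E(\alpha)=I(A)-N(A)$.

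Identity (i) is the easy one: $k$ appears in row $r$ of the monotone triangle exactly when the column partial sum $\sum_{t\le r}A_{tk}$ equals $1$ (a $0/1$ quantity), so summing this indicator over $r$ gives $C_k(\alpha)=\sum_{t}(n-t+1)A_{tk}$, and subtracting $\sum_tA_{tk}=1$ yields the claim. Identity (ii) is a careful unraveling of the definition: the pair of strict inequalities $\alpha_{i,j-1}<\alpha_{i,j}<\alpha_{i+1,j-1}$ translates under $\alpha_{i,j}=a_{n-j,i}$ into the chain $a_{r,s}<a_{r-1,s}<a_{r,s+1}$ with $r=n-j+1$ and $s=i$, which forces $a_{r-1,s}$ to lie strictly between two consecutive entries of row $r$ of the monotone triangle, hence to be an element of row $r-1$ absent from row $r$; this is precisely the characterization of $A_{r,a_{r-1,s}}=-1$, giving a bijection between $N$-entries of $\alpha$ and $-1$ entries of $A$.

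The main obstacle is (iii), $E(\alpha)=I(A)-N(A)$. I would first verify the permutation-matrix case, where $N(A_\pi)=0$ and the monotone-triangle equalities $a_{r,s}=a_{r+1,s+1}$ correspond bijectively to the inversions of $\pi$: the equality holds iff the newly inserted value $\pi(r+1)$ is smaller than the $s$th smallest entry of $\{\pi(1),\dots,\pi(r)\}$, so for fixed $r$ the count over $s$ equals $\#\{t\le r:\pi(t)>\pi(r+1)\}$, and summing over $r$ gives $\mathrm{inv}(\pi)=I(A_\pi)$. To extend to general ASMs I would either (a) induct on $N(A)$ using the elementary toggle that replaces a minimal $\bigl(\begin{smallmatrix}1&0\\0&1\end{smallmatrix}\bigr)$ submatrix with $\bigl(\begin{smallmatrix}0&1\\1&-1\end{smallmatrix}\bigr)$, showing that each such toggle increments $I(A)$ by exactly one more than it increments $E(\alpha)$, or (b) expand $I(A)=\sum_{i>k,\,j<\ell}A_{ij}A_{k\ell}$ directly by substituting $A_{ij}=[j\in R_i]-[j\in R_{i-1}]$ (where $R_i$ denotes the set of entries in monotone-triangle row $i$), telescope the resulting four double sums in the outer index, and identify the remainder as $E(\alpha)+N(A)$. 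Approach (b) avoids induction but requires careful treatment of the boundary terms coming from $R_n=\{1,\dots,n\}$; approach (a) is more combinatorial but requires a delicate local computation of how each toggle affects the two statistics.
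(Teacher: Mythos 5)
Your reduction of the theorem to the three statistic identities is exactly the route the paper takes, and your treatments of (i) and (ii) are correct and complete (indeed, for (ii) you supply a verification that the paper only asserts when it defines $N(\alpha)$). The genuine gap is identity (iii), $E(\alpha)=I(A)-N(A)$, which you correctly flag as the crux but then leave as a choice between two unexecuted strategies. The permutation-matrix check is fine, but it does not discharge the general case, and your proposed induction (a) rests on an unproved and genuinely delicate structural claim: that every ASM with $N(A)=m\ge 1$ arises from one with $m-1$ minus-ones by the stated $2\times 2$ block toggle. The natural ``un-toggle'' of a given $-1$ at $(k,\ell)$ -- promoting the position at the intersection of the row of the $1$ above it and the column of the $1$ to its left -- frequently lands on an entry that is already nonzero or breaks alternation (this already happens for one of the two $-1$s in the $4\times 4$ ASM with $-1$s at $(2,2)$ and $(3,3)$), so you would need a separate lemma guaranteeing that \emph{some} $-1$ is always resolvable, plus the ``delicate local computation''; neither is supplied.

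Your option (b) is essentially what the paper does, and it does go through, but the telescoping you defer is the entire content of the step. Concretely: since the partial column sums of an ASM are $0/1$, one has $\sum_{k<i}A_{k\ell}=[\ell\in R_{i-1}]$, hence $I(A)=\sum_{i,j}A_{ij}\,c_{ij}$ where $c_{ij}=\#\{\ell>j:\ell\in R_{i-1}\}$. One then examines, for each column $\ell'$ counted by $c_{ij}$ with $A_{ij}=\pm1$, the value of $A_{i\ell'}$: it cannot be $1$; if it is $0$ the pair contributes a southwest-diagonal equality of $\alpha$; if it is $-1$ it contributes to $N(A)$ instead. That case analysis, not the permutation case, is what yields $I(A)=E(\alpha)+N(A)$, and it is missing from your write-up. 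As it stands the proof is a correct skeleton with its central identity unproved.
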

\begin{proof}
First we rewrite Equation~\ref{eq:2enumeq} by factoring out $\lambda^{-1}$ from each $\left(1+\lambda^{-1}\right)$.
\[
\displaystyle\prod_{1\le i < j\le n} (x_i + \lambda x_j) = \displaystyle\sum_{A\in A_n} \lambda^{I(A) - N(A)}\left(1+\lambda\right)^{N(A)}\displaystyle\prod_{i,j=1}^n x_{j}^{(n-i)A_{ij}}.
\]
Let $\alpha\in Y_n^+(\{b,y,o,g\})$ be the array which corresponds to $A$. It is left to show that $I(A)-N(A)=E(\alpha)$ and $\prod_{i,j=1}^n x_{j}^{(n-i)A_{ij}}=\prod_{j=1}^n x_j^{C_j(\alpha)-1}$. In the latter equality take the product over $i$ of the left hand side: $\prod_{i,j=1}^n x_{j}^{(n-i)A_{ij}}=\prod_{j=1}^n x_{j}^{\sum_{i=1}^n(n-i)A_{ij}}$. 
We wish to show $C_{j}(\alpha)-1=\sum_{i=1}^n (n-i)A_{ij}$. $C_j(\alpha)$ equals the number of entries of $\alpha$ with value $j$, so $C_j(\alpha)-1$ equals the number of entries of $\alpha$ with value $j$ not counting the $j$ in the 0th column. Using the bijection between ASMs and the arrays $Y_n(\{b,y,o,g\})$ we see that the number of $j$s in columns 1 through $n-1$ of $\alpha$ equals the number of 1s in column $j$ of $A$ plus the number of zeros in column $j$ of $A$ which are south of a 1 with no $-1$s in between. This is precisely what $\sum_{i=1}^n(n-i)A_{ij}$ counts by taking a positive contribution from every~1 and every entry below that~1 in column $j$ and then subtracting one for every~$-1$ and every entry below that $-1$ in column $j$. Thus $C_{j}(\alpha)-1=\sum_{i=1}^n (n-i)A_{ij}$ so that $\prod_{i,j=1}^n x_{j}^{(n-i)A_{ij}}=\prod_{j=1}^n x_j^{C_j(\alpha)-1}$.

Now we wish to show that $I(A)-N(A)=E(\alpha)$. 
Recall that the inversion number of an ASM is defined as $I(A)=\sum A_{ij} A_{k\ell}$ where the sum is over all $i,j,k,\ell$ such that $i>k$ and $j<\ell$. Fix $i$, $j$, and $\ell$ and consider $\sum_{k<i} A_{ij} A_{k\ell}$. Let $k'$ be the row of the southernmost nonzero entry in column $\ell$ such that $k'<i$. If there exists no such $k'$ (that is, $A_{k\ell}=0 \mbox{ }\forall\mbox{ } k<i$) or if $A_{k'\ell}=-1$ then $\sum_{k>i} A_{ij} A_{k\ell}=0$ since there must be an even number of nonzero entries in $\{A_{k\ell}, k<i\}$ half of which are 1 and half of which are $-1$. If $A_{k'\ell}=1$ then $\sum_{k<i} A_{ij} A_{k\ell}=A_{ij}$. Thus $I(A)=\sum_{i,j} \alpha_{ij} A_{ij}$ where $\alpha_{ij}$ equals the number of columns east of column $j$ such that $A_{k'\ell}$ with $k'>i$ exists and equals~1. Let column $\ell'$ be one of the columns counted by $\alpha_{ij}$. Then $A_{i\ell'}$ cannot equal~1, otherwise $A_{k'\ell'}$ would either not exist or equal $-1$. If $A_{i\ell'}=0$ then in $\alpha$ there is a corresponding diagonal equality. If $A_{i\ell'}=-1$ then there is no diagonal equality in $\alpha$. Thus $I(A)=E(\alpha)+N(A)$.
\end{proof}

%

Recall that the left-hand side of~(\ref{eq:asmlambda}) is the generating function for tournaments where the power of $\lambda$ equals the number of upsets and the power of $x_k$ equals the number of wins of $k$. If we set $\lambda =1$ and take the principle specialization of the $x_k$'s this yields the generating function of Theorem~\ref{thm:3nadj} multiplied by a power of $q$. Furthermore, if we set $\lambda = 1$ and $x_i=1$ for all $i$ we have the following corollary.
\begin{corollary}[Mills-Robbins-Rumsey]
\label{cor:2nc2asm}
\begin{equation}
\label{eq:asm2enum}
2^{n\choose 2} = \displaystyle\sum_{A\in A_n} 2^{N(A)}.
\end{equation}
\end{corollary}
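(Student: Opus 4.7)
The plan is to obtain this corollary as a direct specialization of Theorem~\ref{thm:lambdaASM}. First I would set $\lambda = 1$ and $x_i = 1$ for all $i$ in the identity
\[
\displaystyle\prod_{1\le i < j\le n} (x_i + \lambda x_j) = \displaystyle\sum_{\alpha\in Y_n^+(\{b,y,o,g\})} \lambda^{E(\alpha)} (1 + \lambda)^{N(\alpha)} \displaystyle\prod_{k=1}^n x_k^{C_k(\alpha)-1}.
\]

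Under this specialization, the left-hand side becomes $\prod_{1\le i<j\le n}(1+1) = 2^{\binom{n}{2}}$ since there are exactly $\binom{n}{2}$ factors in the product. On the right-hand side, each factor $\lambda^{E(\alpha)}$ becomes $1$, each factor $x_k^{C_k(\alpha)-1}$ becomes $1$, and each factor $(1+\lambda)^{N(\alpha)}$ becomes $2^{N(\alpha)}$. This gives
\[
2^{\binom{n}{2}} = \sum_{\alpha \in Y_n^+(\{b,y,o,g\})} 2^{N(\alpha)}.
\]

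Finally, I would invoke the bijection of Proposition~\ref{prop:asmbij} between $Y_n^+(\{b,y,o,g\})$ and $A_n$, noting (as observed in the paragraph preceding Theorem~\ref{thm:lambdaASM}) that this bijection matches the statistic $N(\alpha)$ on arrays with the statistic $N(A)$ counting $-1$ entries in the corresponding ASM. Rewriting the sum over $\alpha \in Y_n^+(\{b,y,o,g\})$ as a sum over $A \in A_n$ yields the desired identity $2^{\binom{n}{2}} = \sum_{A \in A_n} 2^{N(A)}$. Since Theorem~\ref{thm:lambdaASM} has already been proved in the preceding pages, the only work here is a straightforward substitution and an appeal to the bijection, so there is no substantive obstacle to address.
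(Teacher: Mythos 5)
Your proposal is correct and matches the paper exactly: the paper obtains this corollary precisely by setting $\lambda=1$ and $x_i=1$ for all $i$ in Theorem~\ref{thm:lambdaASM}, with the identification of $N(\alpha)$ with the number of $-1$ entries of the corresponding ASM already established. The substitution details you supply are routine and accurate.
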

That is, $2^{n\choose 2}$ is the 2-enumeration of ASMs with respect to the number of $-1$s. This result appears in~\cite{MRRASMDPP} and~\cite{ROBBINSRUMSEY}. The connection between this result and Aztec diamonds is shown in~\cite{AZTEC1}. 

Many people have wondered what the TSSCPP analogue of the $-1$ in an ASM may be. The following theorem does not give a direct analogue, but rather expands the left-hand side of~(\ref{eq:asmlambda}) as a sum over TSSCPPs instead of ASMs.

\begin{theorem}
\label{thm:lambdaTSSCPP}
The generating function for tournaments on $n$ vertices can be expanded as a sum over the TSSCPP arrays $Y_n^+(\{b,r,(g),y\})$ in the following way.
\begin{equation}
\label{eq:lambdatsscpp}
\displaystyle\prod_{1\le i < j\le n} (x_i + \lambda x_j) = \displaystyle\sum_{\alpha\in Y_n^+(\{b,r,(g),y\})} \lambda^{E(\alpha)} \displaystyle\prod_{i=1}^{n-1} x_i^{n-i-E_i(\alpha)} \displaystyle\sum_{\mbox{row shuffles $\alpha'$ of $\alpha$}} \mbox{ }\displaystyle\prod_{j=1}^{n-1} x_j^{E^j(\alpha')}
\end{equation}
\begin{equation}
\label{eq:lambdanoxtsscpp}
(1+\lambda)^{n\choose 2} = \displaystyle\sum_{\alpha\in Y_n^+(\{b,r,(g),y\})} \lambda^{E(\alpha)} \displaystyle\prod_{1\le i\le k\le n-1} {C_{i+1,k}(\alpha)\choose E_{i,k}(\alpha)}
\end{equation}
where a row shuffle $\alpha'$ of $\alpha\in Y_n^+(\{b,r,(g),y\}$ is an array obtained by reordering the entries in the rows of $\alpha$ in such a way that $\alpha'\in Y_n^+(\{b,r,(g)\}$.
\end{theorem}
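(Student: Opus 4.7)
The plan is to read the LHS $\prod_{1\le i<j\le n}(x_i+\lambda x_j)$ as the tournament generating function, with $\lambda$ marking upsets and $x_k$ marking wins of $k$, and then to reorganize via the bijection of Section~\ref{sec:three} between tournaments on $n$ vertices and $Y_n^+(\{b,r,(g)\})$. Under that bijection a tournament with array $\alpha'$ has $E(\alpha')$ upsets, while the wins of $k$ split as the upsets in games $(i,k)$ with $i<k$ (entries of diagonal $k$ equal to their southwest neighbor, counted by $E^k(\alpha')$) plus the non-upsets in games $(k,j)$ with $j>k$ (entries of row $k$ not equal to their southwest neighbor, counted by $(n-k)-E_k(\alpha')$). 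So the LHS equals $\sum_{\alpha'\in Y_n^+(\{b,r,(g)\})}\lambda^{E(\alpha')}\prod_k x_k^{E^k(\alpha')+(n-k)-E_k(\alpha')}$.

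To pass from tournament arrays to TSSCPP arrays, I would partition $Y_n^+(\{b,r,(g)\})$ by \emph{row-sort}: sort the non-$0$ part of each row weakly increasingly to produce an element $\alpha\in Y_n^+(\{b,r,(g),y\})$. Two invariances drive the argument. First, the row-sort really lies in $Y_n^+(\{b,r,(g),y\})$: yellow is automatic; green holds because $\alpha'_{i,j}\le j+i$ forces at least $j$ entries of row $i$ to lie in $[i,j+i]$, so the $j$-th smallest is at most $j+i$; and blue and red survive the sort by the classical rearrangement fact that a pointwise-dominating bijection between two multisets descends to their sorted pairing. Second, and more delicately, both $E(\alpha')$ and each $E_i(\alpha')$ are preserved under row-shuffling inside $Y_n^+(\{b,r,(g)\})$: summing the identity $\alpha'_{i+1,j-1}-\alpha'_{i,j}\in\{0,1\}$ over $j=1,\ldots,n-i$ gives $E_i(\alpha')=(n-i)-\sum_{v\in M_{i+1}}v+\sum_{v\in M_i}v$, where $M_i$ denotes the multiset of row $i$; this is purely a function of the row multisets, and a finer telescoping argument shows each $E_{i,k}(\alpha')$ is itself a multiset invariant.

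Given these invariances, the sum over $\alpha'$ regroups by row-sort $\alpha$, pulling $\lambda^{E(\alpha)}$ and $\prod_i x_i^{(n-i)-E_i(\alpha)}$ outside the inner sum over row-shuffles and leaving $\prod_j x_j^{E^j(\alpha')}$ inside, which is exactly~\eqref{eq:lambdatsscpp}. For~\eqref{eq:lambdanoxtsscpp}, I specialize $x_k=1$: the LHS becomes $(1+\lambda)^{\binom{n}{2}}$ and the inner sum collapses to the number $N(\alpha)$ of valid row-shuffles of $\alpha$. To evaluate $N(\alpha)$ I work row-by-row from the bottom. Given any valid ordering of row $i+1$, the count of valid orderings of row $i$ with the prescribed multiset is $\prod_k\binom{C_{i+1,k}(\alpha)}{E_{i,k}(\alpha)}$: for each value $k$, one chooses which $E_{i,k}$ of the $C_{i+1,k}$ row-$i$ positions whose southwest neighbor has value $k$ should be ``matches'' (row-$i$ value $k$) rather than ``non-matches'' (row-$i$ value $k-1$). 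Crucially this count depends only on the multiset of row $i+1$, so the factors multiply independently over $i$ and $k$ to give the claimed product; the boundary cases $k=i$ and $k=n$, which fall outside or at the edge of the product's index range, yield trivial binomials of the form $\binom{0}{0}$ or $\binom{1}{1}$.

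The main obstacle is the pair of invariance claims in the second paragraph. The rearrangement step for blue and red is essentially standard but needs careful bookkeeping around the fixed column-$0$ entries $\alpha_{i+1,0}=i+1$. The multiset-invariance of $E_{i,k}$ is the real combinatorial heart of the argument: it says the entire upset pattern of a tournament, refined by value, is rigidly determined by the unordered contents of each row, and this rigidity is what makes the binomial factorization of $N(\alpha)$ decouple cleanly across rows.
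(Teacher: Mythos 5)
Your proposal is correct and follows essentially the same route as the paper's proof: interpret the left-hand side as the generating function of the tournament arrays $Y_n^+(\{b,r,(g)\})$, fiber these over their row-sorted images in $Y_n^+(\{b,r,(g),y\})$, use the invariance of the diagonal-equality statistics under row shuffles to pull $\lambda^{E(\alpha)}$ and $\prod_i x_i^{n-i-E_i(\alpha)}$ out of the inner sum, and count each fiber by $\prod_{i,k}\binom{C_{i+1,k}(\alpha)}{E_{i,k}(\alpha)}$. Your explicit telescoping identity expressing $E_i$ (and, refined, $E_{i,k}$) as a function of the row multisets makes the invariance step more transparent than the paper's treatment, but the underlying argument is the same.
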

\begin{proof}
The idea of the proof is to begin with the set $Y_n^+(\{b,r,(g),y\})$ and remove the inequality restriction corresponding to the color yellow to obtain the arrays $Y_n^+(\{b,r,(g)\})$. We wish to prove that all the arrays in $Y_n^+(\{b,r,(g)\})$ can be obtained uniquely by the rearrangement of the rows of the arrays in $Y_n^+(\{b,r,(g),y\})$. 

First we need to see how $\prod_{1\le i < j\le n} (x_i + \lambda x_j)$ is the generating function of the arrays $Y_n^+(\{b,r,(g)\})$. Recall the bijection of Section~\ref{sec:three} between $Y_n^+(\{b,r,(g)\})$ and tournaments on $n$ vertices. Let each entry $\alpha_{i,j}$ with $j\ge 1$ of $\alpha\in Y_n^+(\{b,r,(g)\})$ contribute a weight $x_i$ if $\alpha_{i,j} = \alpha_{i+1,j-1}-1$ and $\lambda x_{j+i}$ if $\alpha_{i,j}=\alpha_{i+1,j-1}$. Thus $\prod_{1\le i < j\le n} (x_i + \lambda x_j)$ is the generating function of $Y_n^+(\{b,r,(g)\})$. 

Now we give an algorithm for turning any element $\alpha\in Y_n^+(\{b,r,(g)\})$ into an element of $Y_n^+(\{b,r,(g),y\})$ thus grouping all the elements of $Y_n^+(\{b,r,(g)\})$ into fibers over the elements of $Y_n^+(\{b,r,(g),y\})$. Assume all the rows of $\alpha$ below row $i$ are in increasing order. Thus $\alpha_{i+1,j}\le \alpha_{i+1,j+1}$. If $\alpha_{i+1,j}< \alpha_{i+1,j+1}$ then $\alpha_{i,j+1}\le \alpha_{i+1,j+2}$ since $\alpha_{i,j+1}\in \{\alpha_{i+1,j}, \alpha_{i+1,j}-1\}$ and $\alpha_{i,j+2}\in \{\alpha_{i+1,j+1},\alpha_{i+1,j+1}-1\}$ by the inequalities corresponding to red and blue. So the only entries which may be out of order in row~$i$ are those for which their southwest neighbors are equal. If $\alpha_{i+1,j} = \alpha_{i+1,j+1}$ but $\alpha_{i,j+1}>\alpha_{i,j+2}$ it must be that $\alpha_{i,j+1}=\alpha_{i+1,j}$ and $\alpha_{i,j+2}=\alpha_{i+1,j+1}-1$. So we may swap $\alpha_{i,j+1}$ and $\alpha_{i,j+2}$ along with their entire respective northeast diagonals while not violating the red and blue inequalities. 
By completing this process for all rows we obtain an array with weakly increasing rows, thus this array is actually in $Y_n^+(\{b,r,(g),y\})$. 

Now we do a weighted count of how many arrays in $Y_n^+(\{b,r,(g)\})$ are mapped to a given array in $Y_n^+(\{b,r,(g),y\})$. Again we rely on the fact that entries in a row can be reordered only when their southwest neighbors are equal. Thus to find the weight of all the $Y_n^+(\{b,r,(g)\})$ arrays corresponding to a single $Y_n^+(\{b,r,(g),y\})$ array we simply need to find the set of diagonals containing equalities. The diagonal equalities give a weight dependent on which diagonal they are in, whereas the diagonal inequalities give a weight according to their row (which remains constant). Thus if we are keeping track of the $x_i$ weight we can do no better than to write this as a sum over all the allowable (in the sense of not violating the $b$ or $r$ inequalities) shuffles of the rows of $\alpha$ with the $x$ weight of the diagonal equalities dependent on the position. Thus we have Equation (\ref{eq:lambdatsscpp}). See Figure~\ref{fig:brgylam} for an example when $n=3$.

If we set $x_i=1$ for all $i$ and only keep track of the $\lambda$ we can make a more precise statement. The above proof shows that the $\lambda$'s result from the diagonal equalities, and the number of different reorderings of the rows tell us the number of different elements of $Y_n^+(\{b,r,(g)\})$ which correspond to a given element of $Y_n^+(\{b,r,(g),y\})$. We count this number of allowable reorderings as a product over all rows $i$ and all array values $k$ as ${C_{i+1,k}(\alpha)\choose E_{i,k}(\alpha)}$. This yields Equation~(\ref{eq:lambdanoxtsscpp}).
\end{proof}

\begin{figure}[htbp]
\[
\begin{array}{ccc}
1&1 &1 \\
2&2& \\
3&&
\end{array}
\hspace{1.5cm}
\begin{array}{ccc}
1&1 &\textcolor{red}{2} \\
2&2&\\
3&&
\end{array}
\hspace{1.5cm}
\begin{array}{ccc}
1&1 &2 \\
2&\textcolor{red}{3}&\\
3&&
\end{array}
\hspace{1.5cm}
\begin{array}{ccc}
1&1 & \textcolor{red}{3} \\
2&\textcolor{red}{3}&\\
3&&
\end{array}
\]

\[
x_1^2 x_2
\hspace{1.5cm}
\textcolor{red}{\lambda} x_1 x_2 \textcolor{red}{(x_2+x_3)}
\hspace{1.5cm}
\textcolor{red}{\lambda} x_1^2 \textcolor{red}{x_3}
\hspace{1.5cm}
\textcolor{red}{\lambda^2} x_1 \textcolor{red}{x_3^2}
\]

%
\[
\begin{array}{ccc}
1&\textcolor{red}{2} & \textcolor{red}{2} \\
2&2&\\
3&&
\end{array}
\hspace{1.5cm}
\begin{array}{ccc}
1&\textcolor{red}{2} & 2 \\
2&\textcolor{red}{3}&\\
3&&
\end{array}
\hspace{1.5cm}
\begin{array}{ccc}
1&\textcolor{red}{2} & \textcolor{red}{3} \\
2&\textcolor{red}{3}&\\
3&&
\end{array}
\]

\[
\textcolor{red}{\lambda^2} x_2 \textcolor{red}{x_2 x_3}
\hspace{1.5cm}
\textcolor{red}{\lambda^2} x_1 \textcolor{red}{x_2 x_3}
\hspace{1.5cm}
\textcolor{red}{\lambda^3 x_2 x_3^2}
\]

\caption[The arrays $Y_3^+(\{b,r,(g),y\})$]{The arrays $Y_3^+(\{b,r,(g),y\})$ and their corresponding contribution toward the right hand side of Equation~(\ref{eq:lambdatsscpp}) 
}
\label{fig:brgylam}
\end{figure}

If we set $\lambda=1$ in Equation~(\ref{eq:lambdanoxtsscpp}) we obtain the following corollary.
\begin{corollary}
\label{cor:tsscpp2nc2}
\begin{equation}
\label{eq:2nc2tsscpp}
2^{n\choose 2} = \displaystyle\sum_{\alpha\in Y_n^+(\{b,r,(g),y\})} \displaystyle\prod_{1\le i\le k\le n-1} {C_{i+1,k}(\alpha)\choose E_{i,k}(\alpha)}.
\end{equation}
\end{corollary}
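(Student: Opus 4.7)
The plan is to obtain this corollary as a direct specialization of Theorem~\ref{thm:lambdaTSSCPP}. Specifically, I would set $\lambda = 1$ in Equation~(\ref{eq:lambdanoxtsscpp}). The left-hand side becomes $(1+1)^{n \choose 2} = 2^{n \choose 2}$, and the factor $\lambda^{E(\alpha)}$ on the right-hand side becomes $1$ for every array $\alpha \in Y_n^+(\{b,r,(g),y\})$, leaving precisely the sum claimed in the statement. Since Equation~(\ref{eq:lambdanoxtsscpp}) is an identity of polynomials in $\lambda$, this specialization is immediate and requires no additional argument.

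It is worth recording the combinatorial picture that emerges. This corollary is the TSSCPP counterpart of Corollary~\ref{cor:2nc2asm}, the Mills--Robbins--Rumsey $2$-enumeration of ASMs. The left-hand side $2^{n \choose 2}$ counts tournaments on $n$ vertices, which by Theorem~\ref{thm:3nadj} is also $|J(T_n(\{b,r,(g)\}))|$ and equals $|Y_n^+(\{b,r,(g)\})|$. The right-hand side organizes this count by fibering these tournament arrays over the TSSCPP arrays via the row-sorting map constructed in the proof of Theorem~\ref{thm:lambdaTSSCPP}: the product $\prod_{1 \le i \le k \le n-1} \binom{C_{i+1,k}(\alpha)}{E_{i,k}(\alpha)}$ is exactly the size of the fiber over a given $\alpha \in Y_n^+(\{b,r,(g),y\})$, counting the allowable reorderings of each row of $\alpha$ that respect the blue and red inequalities.

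Because the corollary is a one-line specialization, there is no real obstacle once Theorem~\ref{thm:lambdaTSSCPP} is in hand. If one wished instead to give an independent combinatorial proof, the main task would be to verify directly that the binomial product correctly enumerates the fiber over each TSSCPP array---which is precisely the combinatorial heart of Equation~(\ref{eq:lambdanoxtsscpp}) and was handled there by showing that rows of an element of $Y_n^+(\{b,r,(g)\})$ can be uniquely sorted into increasing order via swaps on northeast diagonals whose southwest neighbors coincide.
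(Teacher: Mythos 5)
Your proposal matches the paper exactly: the corollary is obtained by setting $\lambda = 1$ in Equation~(\ref{eq:lambdanoxtsscpp}), which is precisely the one-line specialization the paper uses. The additional combinatorial commentary is consistent with the proof of Theorem~\ref{thm:lambdaTSSCPP} but is not needed for the deduction.
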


Corollary~\ref{cor:tsscpp2nc2} is quite different from Corollary~\ref{cor:2nc2asm}. In Corollary~\ref{cor:2nc2asm} the fibers over ASMs can be only powers of 2, whereas the fibers over TSSCPPs in Corollary~\ref{cor:tsscpp2nc2} have no such restriction. Thus there is no direct way to match up ASMs and TSSCPPs through all the tournaments in the fibers, but it may be possible to find a system of distinct representatives of tournaments which would show us how to map a fiber above a TSSCPP to a fiber above an ASM.

The difference in the weighting of ASMs and TSSCPPs in Theorems~\ref{thm:lambdaASM} and~\ref{thm:lambdaTSSCPP} is also substantial. For ASMs the more complicated part of the formula arises in the power of $\lambda$ and for TSSCPPs the complication comes from the $x$ variables. These theorems are also strangely similar. From these theorems we see that the tournament generating function can be expanded as a sum over either ASMs or TSSCPPs, but we still have no direct reason why the number of summands should be the same. 
The combination of Theorems~\ref{thm:lambdaASM} and \ref{thm:lambdaTSSCPP} may contribute toward finding a bijection between ASMs and TSSCPPs, but also shows why a bijection is not obvious.

The last thing we wish to do in this section is to describe which subsets of tournaments correspond to TSSCPPs and ASMs.
First we show that 
for a certain choice of TSSCPP poset inside $T_n$, TSSCPPs can be directly seen to be subsets of tournaments, refining the bijection of Section~\ref{sec:three} between the order ideals $J(T_n(\{r,b,(g)\}))$ and tournaments on $n$ vertices.
\begin{theorem}
\label{thm:tsscpptournbij}
TSSCPPs inside a $2n\times 2n\times 2n$ box are in bijection with tournaments on vertices labeled $1,2,\ldots,n$ which satisfy the following condition on the upsets: if vertex $v$ has $k$ upsets with vertices in $\{u,u+1,\ldots,v-1\}$ then vertex $v-1$ has at most $k$ upsets with vertices in $\{u,u+1,\ldots,v-2\}$. 
\end{theorem}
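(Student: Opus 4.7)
The plan is to chain two earlier bijections and verify that the yellow inequality on the intermediate arrays translates precisely into the stated upset condition.

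First I would identify the right array model for TSSCPPs. By Proposition~\ref{prop:tsscppbij} the TSSCPPs inside a $2n\times 2n\times 2n$ box are in bijection with $J(T_n(\{r,g,o,(y)\}))$, and by the poset isomorphisms invoked in the proof of Theorem~\ref{thm:4four} this lattice is isomorphic to $J(T_n(\{r,b,(g),y\}))$; in turn Proposition~\ref{prop:yplusbij} identifies the latter with $Y_n^+(\{r,b,(g),y\})$. This set is precisely the subset of $Y_n^+(\{b,r,(g)\})$ obtained by imposing the additional yellow condition $\alpha_{i,j}\le\alpha_{i,j+1}$ (weakly increasing rows).

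Next I would invoke the Section~\ref{sec:three} bijection between $Y_n^+(\{b,r,(g)\})$ and tournaments on $\{1,2,\ldots,n\}$, in which each entry $\alpha_{i,j}$ with $j\ge 1$ records the outcome of the game between vertices $i$ and $i+j$: an upset when $\alpha_{i,j}=\alpha_{i+1,j-1}$ and a non-upset when $\alpha_{i,j}=\alpha_{i+1,j-1}-1$. Tracing this ``stay or decrement by one'' rule up a single SW--NE diagonal $v=i+j$, starting from the pinned value $\alpha_{v,0}=v$, yields by induction on $v-i$ the closed form
\[
\alpha_{i,v-i} = i + U(i,v),
\]
where $U(u,v)$ denotes the number of vertices $j\in\{u,u+1,\ldots,v-1\}$ for which the game between $j$ and $v$ is an upset.

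With this formula in hand the yellow inequality $\alpha_{i,j}\le\alpha_{i,j+1}$ becomes, setting $u=i$ and $v=i+j$, exactly $U(u,v)\le U(u,v+1)$. Relabelling $v\mapsto v-1$ this reads: whenever $U(u,v)=k$, one has $U(u,v-1)\le k$, which is the condition stated in the theorem. Both directions of the implication are symmetric, so composing the two bijections produces the desired bijection between TSSCPPs and the prescribed family of tournaments; the case $v=u+1$ is vacuous, matching the fact that the inequality $\alpha_{i,0}=i\le\alpha_{i,1}$ is automatic in $Y_n^+(\{b,r,(g)\})$.

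The one spot where I expect to need a little care is in establishing the diagonal formula $\alpha_{i,v-i}=i+U(i,v)$: the base case uses the definition of the zeroth column of $Y_n^+$, and the inductive step uses that along each SW--NE diagonal the only allowed transitions are ``stay'' or ``decrement by one,'' which itself is a consequence of having both the red and blue inequalities in force. Once this bookkeeping is in place, the rewriting of the yellow condition as the upset condition is essentially immediate.
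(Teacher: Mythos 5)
Your proposal is correct and follows essentially the same route as the paper's proof: both pass from TSSCPPs to the arrays $Y_n^+(\{r,b,(g),y\})$, invoke the Section~\ref{sec:three} tournament bijection for $Y_n^+(\{b,r,(g)\})$, and translate the yellow row-inequality into the monotonicity of upset counts along SW--NE diagonals. Your explicit closed form $\alpha_{i,v-i}=i+U(i,v)$ is just a cleaner restatement of the paper's observation that $\alpha_{i,j}$ is determined by the number of diagonal equalities to its southwest, so there is nothing to add.
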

\begin{proof}
We have seen in Section~\ref{sec:three} the bijection between the order ideals of $T_n(\{r,b,(g)\})$ and tournaments on $n$ vertices. Recall that the inequalities corresponding to the colors red and blue for $\alpha\in Y_n^+(\{r,b,(g)\})$ place conditions on the diagonals such that 
there are exactly two choices for any entry $\alpha_{i,j}$ not in column 0. Either $\alpha_{i,j}=\alpha_{i+1,j-1}$ or $\alpha_{i,j}=\alpha_{i+1,j-1}-1$. In the bijection with tournaments, $\alpha_{i,j}$ tells the outcome of the game between $i$ and $i+j$ in the tournament. The outcome is an upset if $\alpha_{i,j}=\alpha_{i+1,j-1}$ and not an upset otherwise. Thus if we consider the TSSCPP arrays $Y_n^+(\{r,b,(g),y\})$ we need only find an interpretation for the color yellow in terms of tournaments. 

Recall that the color yellow corresponds to a weak increase across the rows of $\alpha$. In order for weak increase across the rows of $\alpha$ to be satisfied, 
for each choice of $i\in\{1,\ldots,n-1\}$ and $j\in\{1,\ldots,n-i-1\}$ the number of diagonal equalities to the southwest of $\alpha_{i,j}$ must be less than or equal to the number of diagonal equalities to the southwest of $\alpha_{i,j+1}$. So in terms of tournaments, the number of upsets between $i+j+1$ and vertices greater than or equal to $i$ must be greater than or equal to the number of upsets between $i+j$ and vertices greater than or equal to $i$.
\end{proof}

Note that every integer array/tournament pair in Figure~\ref{fig:brgtourn} satisfies the condition of this theorem and is thus in bijection with a TSSCPP except the pair shown below in Figure~\ref{fig:pair2}. This is because vertex $3$ has zero upsets with vertices in $\{1,2\}$ but vertex $2$ has one upset with vertices in $\{1\}$.

\begin{figure}[htbp]
\centering
\includegraphics[scale=0.5]{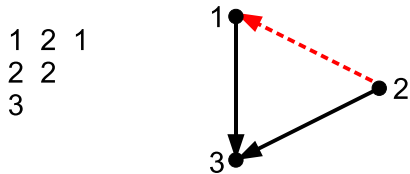}
\caption[The only tournament on 3 vertices not corresponding to a TSSCPP]{The only tournament on 3 vertices (and corresponding array in $Y_3^+(\{b,r,(g)\})$) which does not correspond to a TSSCPP}
\label{fig:pair2}
\end{figure}

Next we discuss a previously known way of viewing ASMs as a subset of tournaments. 
In the paper~\cite{HAMELKING}, Hamel and King give a weight-preserving bijection between objects called primed shifted semistandard tableaux (PST) of staircase shape which encode the 2-enumeration of ASMs and primed shifted tableaux (PD) of staircase shape which encode tournaments. Both PSTs and PDs have entries in the set $\{1,2',2,3',3,\ldots,n',n\}$. The bijection between PSTs and PDs proceeds using jeu de taquin on the primed entries. If one removes the primes from all entries of a PST, one is left with arrays equivalent to $Y_n^+\{b,y,o,g\}$ which are in bijection with $n\times n$ ASMs. Now for any entry of a PST which represents a $-1$ in the corresponding ASM, we have a choice as to whether that entry should be primed or not in the PST. Thus given an ASM, if we fix some rule concerning how to choose which entries of the ASM array should be primed in the inclusion into PSTs, we can map ASMs into tournaments by way of the bijection between PSTs and PDs. Since the bijection between PSTs and PDs uses jeu de taquin, though, it remains difficult to say which subset of tournaments is in the image of this map.
Also, since PDs encode tournaments, they are in direct bijection with the tournament tableaux of Sundquist (see Section~\ref{sec:three}). It would be interesting to look at the bijection from PSTs to PDs to tournament tableaux to SSYT to see if any interesting connections can be made.

Thus far viewing both ASMs and TSSCPPs as subsets of tournaments (or SSYT) has not produced the elusive bijection between ASMs and TSSCPPs. There is still hope, however, that further study of ASMs and TSSCPPs from the poset perspective may shed light on this area. See Figure~\ref{fig:bigpic} for the big picture of bijections and inclusions among ASMs, SSYT, tournaments, and TSSCPPs. 

\begin{figure}[htbp]
\centering
\includegraphics[scale=0.44]{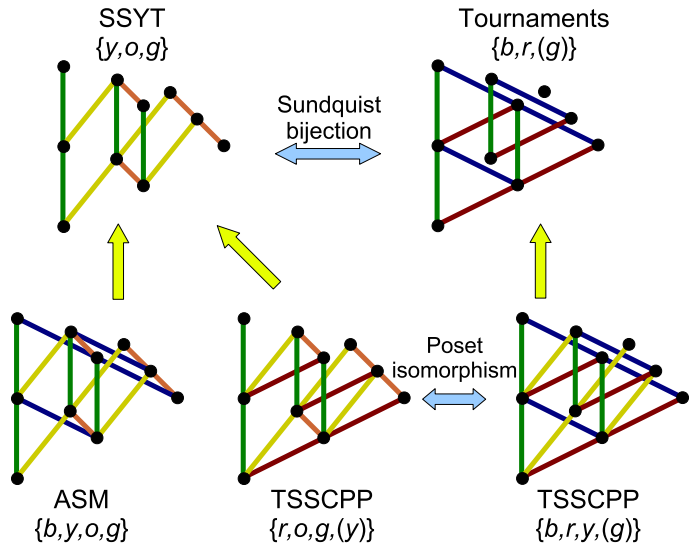}
\caption[Bijections and inclusions among three and four color posets]{The big picture of bijections and inclusions among the three and four color tetrahedral subposets. The two sided arrows represent bijections between sets of order ideals. The one sided arrows represent inclusions of one set of order ideals into another.}
\label{fig:bigpic}
\end{figure}

\section{The five color posets---intersections of ASMs and TSSCPPs}
\label{sec:five}
Alternating sign matrices and totally symmetric self-complementary plane partitions appear to be very different from one another. ASMs are square matrices of zeros, ones, and negative ones. TSSCPPs are highly symmetric stack of cubes in a corner. Thus it is strange to talk about objects in the intersection of ASMs and TSSCPPs. But when we view ASMs and TSSCPPs as order ideals in tetrahedral posets, that is subsets of the same set of elements subject to different constraints, we see that they are not so dissimilar after all. If we view them furthermore as staircase shape arrays subject to the inequality conditions corresponding to the edge colors as in Propositions~\ref{prop:asmbij} and~\ref{prop:tsscppbij} it is more transparent how one might take their intersection---simply by imposing the inequalities from both sets of arrays simultaneously.

There are two different ways to obtain nonisomorphic posets by choosing five of the six colors of edges in $T_n$. One way corresponds to the intersection of ASMs and TSSCPPs and the other corresponds to the intersection of two different sets of TSSCPPs inside $T_n$. The arrays $Y_n(\{r,b,o,(y),(g)\})$ can be thought of as the intersection of the arrays $Y_n(\{b,y,o,g\})$ corresponding to ASMs and the arrays $Y_n(\{r,b,y,(g)\})$ corresponding to TSSCPPs. The dual five color poset $Y_n(\{s,b,o,(y),(g)\})$ which has silver edges instead of red can be thought of similarly: as the intersection of $Y_n(\{b,y,o,g\})$ (ASMs) with $Y_n(\{s,b,y,(g)\})$ (TSSCPPs). We have not found a nice product formula for the number of order ideals of these posets, but one may yet exist. We have calculated the number of order ideals $|J(T_n(\{r,b,o,(y),(g)\}))|=|J(T_n(\{s,b,o,(y),(g)\}))|$ for $n=1$ to $7$ as $1, 2, 6, 26, 162, 1450, 18626$.

The other five color poset is $T_n(\{r,b,s,y,(g)\})$. The ASM poset $T_n(\{b,y,o,g\})$ is not a subset of this poset since the color orange is not included. So we can think of $T_n(\{r,b,s,y,(g)\})$ as an intersection of two manifestations of TSSCPPs inside $T_n$, but not as an intersection of ASMs and TSSCPPs. In fact, removing any of the colors except green from $T_n(\{r,b,s,y,(g)\})$ yields a TSSCPP poset, thus $T_n(\{r,b,s,y,(g)\})$ is the intersection of any two of those four TSSCPP posets. We have not found a nice product formula for the number of order ideals of this poset either, but we have calculated the number of order ideals $|J(T_n(\{r,b,s,y,(g)\}))|$ for $n=1$ to $6$ as $1, 2, 6, 28, 202, 2252$.


\section{The full tetrahedral poset and TSPPs}
\label{sec:six}

Throughout this paper we have seen how to begin with ${n+1\choose 3}$ poset elements and add ordering relations between them in certain ways until we have built the full tetrahedron $T_n$. In this section we prove Theorem~\ref{thm:6six} by showing that the order ideals of $T_n$ are in bijection with totally symmetric plane partitions inside an  $(n-1)\times (n-1)\times (n-1)$ box. We will then discuss connections with the conjectured $q$-enumeration of TSPPs and also a way to view TSPPs as an intersection of ASMs and TSSCPPs.

Totally symmetric plane partitions are plane partitions which are symmetric with respect to all permutations of the $x,y,z$ axes. Thus we can take as a fundamental domain the wedge where $x\ge y\ge z$. Then if we draw the lattice points in this wedge (inside a fixed bounding box of size $n-1$) as a poset with edges in the $x$, $y$, and $z$ directions, we obtain the poset $T_n$ where the $x$ direction corresponds to the red edges of $T_n$, the $y$ direction to the orange edges, and the $z$ direction to the silver edges. All other colors of edges in $T_n$ are induced by the colors red, silver, and orange. Thus TSPPs inside an $(n-1)\times (n-1)\times (n-1)$ box are in bijection with the order ideals of $T_n$ (see Figure~\ref{fig:tsppdomain}).

\begin{thm2110}
\begin{equation}
|J(T_n)|=\displaystyle\prod_{1\le i\le j\le n-1} \frac{i+j+n-2}{i+2j-2}=\displaystyle\prod_{1\le i\le j\le k\le n-1} \frac{i+j+k-1}{i+j+k-2}.
\end{equation}
\end{thm2110}
\begin{proof}
As discussed above, the order ideals of $T_n$ are in bijection with totally symmetric plane partitions inside an $(n-1)\times (n-1)\times (n-1)$ box. Thus by the enumeration of TSPPs in~\cite{STEMBRIDGE_TSPP} the number of order ideals $|J(T_n)|$ is given by the above formula.
\end{proof}

\begin{figure}[htbp]
\label{fig:tsppdomain}
\centering
\includegraphics[scale=0.5]{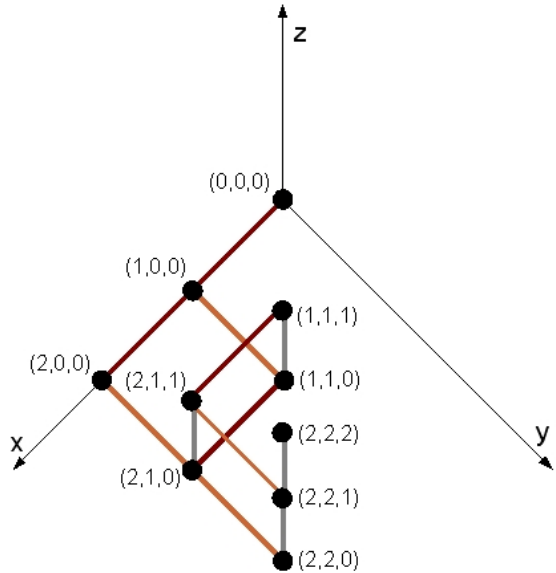}
\caption[The correspondence between the TSPP fundamental domain and $T_4$]{The correspondence between the TSPP fundamental domain $x\ge y\ge z$ and the tetrahedron $T_4$.}
\end{figure}

Stembridge enumerated TSPPs in~\cite{STEMBRIDGE_TSPP}, but no one has completely proved the $q$-version of this formula. If one could prove that 
$\displaystyle\prod_{1\le i\le j\le k\le n-1} \frac{[i+j+k-1]_q}{[i+j+k-2]_q}$ is the rank-generating function of $J(T_n)$, this would solve the last long-standing problem in the enumeration of the ten symmetry classes of plane partitions. Okada has written the $q$-enumeration of TSPPs as a determinant using lattice path techniques~\cite{OKADATSPP}, but this determinant has proved difficult to evaluate. Recently Kauers, Koutschan, and Zeilberger outlined a method for evaluating this determinant if given a large amount of time on the supercomputer~\cite{ZEILQTSPP}. 

Another implication of the poset perspective is that TSPPs can be seen as the intersection of the arrays corresponding to ASMs and TSSCPPs for a particular choice of the TSSCPP arrays.
\begin{proposition}
Totally symmetric plane partitions inside an $(n-1)\times (n-1)\times (n-1)$ box are in bijection with integer arrays corresponding to the intersection of $n\times n$ alternating sign matrices with totally symmetric self-complementary plane partitions inside a $2n\times 2n\times 2n$ box.
\end{proposition}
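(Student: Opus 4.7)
The plan is to realize the full tetrahedron $T_n$ as the combination of the ASM poset and an appropriately chosen TSSCPP presentation inside $T_n$, so that the arrays in their intersection are precisely the arrays encoding $J(T_n)$, and then apply Theorem~\ref{thm:6six} to identify these with TSPPs inside an $(n-1)\times(n-1)\times(n-1)$ box.

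First I would invoke Proposition~\ref{prop:asmbij} to identify $n\times n$ ASMs with the set $Y_n(\{b,y,o,g\})$, and use Proposition~\ref{prop:tsscppbij} together with the poset isomorphism remark preceding the proof of Theorem~\ref{thm:4four} to identify TSSCPPs inside a $2n\times 2n\times 2n$ box with the set $Y_n(\{y,s,(g),r\})$. The essential feature of this pairing is that the union of the two color sets $\{b,y,o,g\}\cup\{y,s,g,r\}$ is the full palette $\{r,b,g,y,o,s\}$; the four admissibility rules of Section~\ref{sec:intro} are automatically satisfied by this union, so no induced edge falls outside it.

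Next I would observe that an integer array of staircase shape $\delta_n$ lies in the intersection of the ASM-arrays and the TSSCPP-arrays above if and only if it satisfies the inequality conditions of Definition~\ref{def:bofs} for every one of the six colors, since each colored inequality appears in at least one of the two constraint sets. Hence the intersection is exactly $Y_n(\{r,b,g,y,o,s\})$, and by Proposition~\ref{prop:yofs} this set is in weight-preserving bijection with $J(T_n)$. Composing with the bijection of Theorem~\ref{thm:6six} between $J(T_n)$ and TSPPs inside an $(n-1)\times(n-1)\times(n-1)$ box yields the desired correspondence.

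There is essentially no obstacle here; the proof is a bookkeeping check that the two color sets cover the full palette, which is really the conceptual content of the statement: TSPPs are precisely those staircase arrays that simultaneously satisfy the inequalities of the chosen ASM and TSSCPP presentations. The only genuine choice is selecting a TSSCPP presentation whose colors include both $r$ and $s$ (the two colors absent from the ASM color set $\{b,y,o,g\}$), for which $\{y,s,(g),r\}$ is the natural candidate among the six TSSCPP-presenting color sets listed in the proof of Theorem~\ref{thm:4four}; the dual choice $T_n^*(\{r,b,(g),s\})$ would work equally well after dualizing.
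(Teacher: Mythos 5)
Your proof is correct and follows essentially the same route as the paper: the paper likewise realizes TSPPs as the arrays $Y_n(\{r,s,o,(g),(y),(b)\})$ for the full tetrahedron and exhibits them as the intersection of the ASM arrays $Y_n(\{b,y,o,g\})$ with a TSSCPP presentation whose colors supply the missing $r$ and $s$ --- indeed $Y_n(\{r,y,(g),s\})$, which is exactly your choice $Y_n(\{y,s,(g),r\})$ (the paper also notes $Y_n(\{r,b,(g),s\})$ works). Your observation that the intersection of array sets is the array set of the union of colors, hence $J(T_n)$, is precisely the content of the paper's argument.
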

\begin{proof}
TSPPs inside an $(n-1)\times (n-1)\times (n-1)$ box are in bijection with the arrays $Y_n(\{r,s,o,(g),(y),(b)\})$ which are the intersection of the arrays $Y_n(\{b,y,o,g\})$ with either $Y_n(\{r,b,(g),s\})$ or $Y_n(\{r,y,(g),s\})$. $Y_n(\{b,y,o,g\})$ is in bijection with $n\times n$ ASMs while $Y_n(\{r,b,(g),s\})$ and $Y_n(\{r,y,(g),s\})$ are both in bijection with TSSCPPs inside a $2n\times 2n\times 2n$ box. 
\end{proof}
It is very strange to think of the set of totally symmetric plane partitions as being contained inside the set of totally symmetric self-complementary plane partitions, since TSSCPPs are TSPPs with the additional condition of being self-complementary. We must keep in mind that we are thinking about TSPPs in an $(n-1)\times (n-1)\times (n-1)$ box which is much smaller than the $2n\times 2n\times 2n$ box in which we are considering TSSCPPs.

\section{Extensions to trapezoids}
\label{sec:trap}
Zeilberger's original proof of the enumeration of ASMs in~\cite{ZEILASM} proved something more general. He proved that the number of gog trapezoids equals the number of magog trapezoids, which in our notation means that the number of distinct arrays obtained by cutting off the first $k$ southwest to northeast diagonals of the ASM arrays $Y_n(\{b,y,o,g\})$ equals the number of distinct arrays obtained by cutting off the first $k$ southwest to northeast diagonals of the TSSCPP arrays $Y_n(\{r,y,o,g\})$.
This was first conjectured by Mills, Robbins, and Rumsey~\cite{BRESSOUDBOOK}. 

Cutting off the first $k$ southwest to northeast diagonals of any  set of arrays $Y(S)$ for $S$ an admissible set of colors corresponds to cutting off a corner of the tetrahedral poset $T_n$. In particular, recall from Section~\ref{sec:intro} that $T_n$ can be thought of as the poset which results from beginning with the poset $P_n$, overlaying the posets $P_{n-1},P_{n-2},\ldots,P_3,P_2$ successively, and connecting each $P_{i}$ to $P_{i-1}$ by the orange, yellow, and silver edges. Also recall from Proposition~\ref{prop:arrays} that the chains made up of the green edges of the copy of $P_j$ inside $T_n$ determine the entries on the $(j-1)$st diagonal of the arrays $X_n(S)$ and likewise the arrays $Y_n(S)$. Therefore cutting off the first $k$ diagonals of the arrays in $Y_n(S)$ corresponds to removing the posets $P_2$, $P_3$,\ldots, $P_{k+1}$ and the edges adjacent to them from the Hasse diagram of $T_n$. Call the poset obtained by this process the trapezoidal poset and denote this poset as $T_n^k$. For $S$ an admissible subset of the colors, let $T_n^k(S)$ be $T_n^k$ with only the colors of edges in $S$ (thus $T_n^0(S)=T_n(S)$).

A natural question, then, is what the trapezoidal version of the theorems on the tetrahedral poset may be. Zeilberger's result in~\cite{ZEILASM} proves the following theorem, restated in our notation, relating the trapezoidal ASM and TSSCPP arrays. 
\begin{theorem}[Zeilberger]
\begin{equation}
|J(T_n^k(\{b,y,o,g\}))|=|J(T_n^k(\{r,y,o,g\}))|.
\end{equation}
\end{theorem}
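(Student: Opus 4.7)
The plan is to use the array bijections of Section~\ref{sec:intro} to translate both sides into classical combinatorial objects. Removing the copies $P_2,P_3,\ldots,P_{k+1}$ from $T_n$ corresponds under Propositions~\ref{prop:arrays} and~\ref{prop:yofs} to deleting the array entries $(i,j)$ with $i+j\le k+1$ from the staircase shape, producing a trapezoidal array whose first $k+1$ rows each have $n-k-1$ entries and whose later rows shrink by one per row. Under these bijections $|J(T_n^k(\{b,y,o,g\}))|$ enumerates the trapezoidal arrays of this shape satisfying the blue, yellow, and orange inequalities, while $|J(T_n^k(\{r,y,o,g\}))|$ enumerates the same-shape arrays satisfying the red, yellow, and orange inequalities. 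After a standard coordinate change (the rotation-by-$\pi/4$ maps used in Sections~\ref{sec:three} and~\ref{sec:four}) these two families become precisely the gog trapezoids and the magog trapezoids of Mills, Robbins, and Rumsey.

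Having made this translation, I would invoke the argument in Zeilberger's original ASM paper~\cite{ZEILASM}, which proves exactly this equinumeracy. His approach expresses both counts as constant terms of explicit multivariable rational functions: one side is obtained via a Lindstr\"om--Gessel--Viennot determinant for non-intersecting lattice paths, while the other comes from a separate determinantal identity for magog trapezoids. The equality of the two constant terms is then established by an elaborate induction on the boundary parameters of the trapezoid, supported by a long list of algebraic identities of hypergeometric type; the $k=0$ specialization recovers the original ASM enumeration.

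The main obstacle is that no combinatorial bijection between gog and magog trapezoids is known, even in the special case $k=0$: that is precisely the ASM--TSSCPP bijection problem that motivates much of this paper. The poset picture developed here suggests a natural avenue, namely to find a local move or symmetric-chain style decomposition of the Hasse diagram of $T_n^k$ that exchanges the blue inequality with the red inequality while preserving the order ideal count. Absent such a construction, one must fall back on Zeilberger's algebraic proof, so in practice the proposal is to reduce the statement to counts of gog and magog trapezoids via the array bijections and cite~\cite{ZEILASM}.
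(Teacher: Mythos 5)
Your proposal matches the paper's treatment: the paper likewise gives no independent proof, but observes that removing $P_2,\ldots,P_{k+1}$ from $T_n$ corresponds to deleting the first $k$ southwest-to-northeast diagonals of the arrays $Y_n(\{b,y,o,g\})$ and $Y_n(\{r,y,o,g\})$, identifies the resulting trapezoidal arrays with gog and magog trapezoids, and cites Zeilberger~\cite{ZEILASM} for their equinumeracy. Your translation of the trapezoid shape and your remarks on the absence of a bijective proof are consistent with the paper's discussion, so the proposal is correct and takes essentially the same route.
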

Besides this theorem, nothing else is known about $T_n^k(S)$ for other admissible subsets of colors $S$. Perhaps the generalization of the results from this paper to the trapezoidal case will find relationships between more combinatorial objects and help to find a bijection between ASMs and TSSCPPs.

\section{Acknowledgements}
This work is based on research which is a part of the author's doctoral thesis at the University of Minnesota under the direction of Dennis Stanton. The author would like to thank Professor Stanton for the many helpful discussions and encouragement. The author would also like to thank the anonymous referee for helpful suggestions.




\begin{thebibliography}{99}

\bibitem{ANDREWS_PPV}
G. Andrews, Plane partitions. V. The TSSCPP conjecture, J. Combin. Theory Ser. A. 66 (1994) 28--39.
\bibitem{BRESSOUDBOOK}
D. M. Bressoud, Proofs and confirmations; The story of the alternating 
sign matrix conjecture, MAA Spectrum, Mathematical Association of America, Washington, DC, 1999.
\bibitem{CARLITZRIORDAN}
L. Carlitz, J. Riordan, Two element lattice permutation numbers and their q--generalization,  Duke J. Math. 31 (1964) 371-�388.
\bibitem{DIFRANCESCOII}
P. Di Francesco, A refined Razumov--Stroganov conjecture II, J. Stat. Mech. Theory Exp. P11004 (2004) 20 pp. (electronic).
\bibitem{QKZJOSEPH}
P. Di Francesco, P. Zinn--Justin, The quantum Knizhnik-�Zamolodchikov equation, generalized Razumov�-Stroganov sum rules and extended Joseph polynomials, J. Phys. A. 38 (2005) L815--L822. 
\bibitem{AZTEC1}
N. Elkies, G. Kuperberg, M. Larsen, and J. Propp, Alternating sign matrices and domino tilings (part I), J. Algebraic Combin. 1 (1992) 111--132. 
\bibitem{QTCAT}
J. Haglund, The $q,t$--Catalan Numbers and the Space of Diagonal Harmonics: With an Appendix on the Combinatorics of Macdonald Polynomials - AMS University Lecture Series, 2008.
\bibitem{HAMELKING}
A. Hamel, R. King, Bijective proofs of shifted tableau and alternating sign matrix identities,  J. Algebraic Combin. 25 (2007) 417--458.
\bibitem{ZEILQTSPP}
M. Kauers, C. Koutschan, D. Zeilberger, A proof of George Andrews' and Dave Robbins' $q$--TSPP conjecture (modulo a finite amount of routine calculations). The personal journal of Shalosh B. Ekhad and Doron Zeilberger \url{http://www.math.rutgers.edu/~zeilberg/pj.html} (2009) 1--8. 
\bibitem{KUP_ASM_CONJ}
G. Kuperberg, Another proof of the ASM conjecture, Inter. Math. Not. 3 (1996) 139-150.
\bibitem{TREILLIS}
A. Lascoux, M. Sch{\"u}tzenberger, Treillis et bases des groupes de {C}oxeter, Electron. J. Combin. 3 (1996).
\bibitem{MACDONALDSYM} 
I. Macdonald, Symmetric Functions and Hall Polynomials, Clarendon, Oxford, 1979.
\bibitem{MRRASMDPP}
W. Mills, D. Robbins, H. Rumsey, Jr., Alternating sign matrices and descending plane partitions, J. Combin. Theory Ser. A. 34 (1983) 340--359.
\bibitem{OKADATSPP}
S. Okada, On the generating functions for certain classes of plane partitions, J. Combin. Theory Ser. A. 51 (1989) 1--23.  
\bibitem{O1LOOP}
A. Razumov, Y. Stroganov, Combinatorial nature of ground state vector of O(1) loop model, Theor. Math. Phys. 138 (2004) 333--337.
\bibitem{READING_ORDER_DIMENSION}
N. Reading, Order dimension, strong Bruhat order, and lattice properties for posets, Order. 19 (2002) 73--100.
\bibitem{ROBBINSRUMSEY}
D. Robbins, H. Rumsey, Jr., Determinants and alternating sign matrices, Adv. in Math. 62 (1986) 169--184.
\bibitem{STANLEY}
R. P. Stanley, Enumerative Combinatorics, Cambridge University Press, Cambridge, 1997.
\bibitem{STEMBRIDGE_TSPP}
J. R. Stembridge, The enumeration of totally symmetric plane partitions, Adv. in Math. 111 (1995) 227--243.
\bibitem{STRIKER_THESIS}
J. Striker, Poset and polytope perspectives on alternating sign matrices, PhD thesis, University of Minnesota, 2008.
\bibitem{STRIKER_FPSAC_09}
J. Striker, The poset perspective on alternating sign matrices. Formal Power Series and Algebraic Combinatorics (Hagenberg, Austria, 2009). Discrete Math. Theoret. Comput. Sci., Proc. Ser. Vol. AK, 819--830.
\bibitem{SUNDQUIST}
T. Sundquist, Pfaffians, involutions, and Schur functions, PhD thesis, University of Minnesota, 1992.
\bibitem{TROTTER}
W. Trotter, Combinatorics and Partially Ordered Sets: Dimension Theory, Johns Hopkins
Series in the Mathematical Sciences, The Johns Hopkins Univ. Press, 1992.
\bibitem{ZEILASM}
D. Zeilberger, Proof of the alternating sign matrix conjecture, Electron. J. Combin. 3 (1996).
\end{thebibliography}
\end{document}